\newlength\myindent
\newcommand{\Rmnum}[1]{\expandafter\@slowromancap\romannumeral #1@}
\newcommand{\inner}[3][]{{\left\langle #2,#3 \right\rangle_{#1}}}
\newcommand\smallO{
  \mathchoice
    {{\scriptstyle\mathcal{O}}} %\displaystyle
    {{\scriptstyle\mathcal{O}}} 
    %\textstyle
    {{\scriptscriptstyle\mathcal{O}}}
    %\scriptstyle
    {\scalebox{.6}{$\scriptscriptstyle\mathcal{O}$}}
    %\scriptscriptstyle
  }
 \newcommand{\whcomm}[2]{{\sf\color{purple} #1}{\sf\color{blue} #2}}
\newtheorem{definition}{Definition}[section]
\newtheorem{theorem}{Theorem}[section]
\newtheorem{lemma}{Lemma}[section]
\newtheorem{proposition}{Proposition}[section]
\newtheorem{corollary}{Corollary}[section]
\newtheorem{assumption}{Assumption}[section]
\newtheorem{remark}{Remark}[section]
\newtheorem{example}{Example}[section]
\numberwithin{equation}{section}
\DeclareMathOperator{\prox}{\mathrm{prox}}
\DeclareMathOperator{\F}{\mathrm{F}}
\DeclareMathOperator{\T}{\mathrm{T}}
\DeclareMathOperator{\Hess}{\mathrm{Hess}}
\DeclareMathOperator{\grad}{\mathrm{grad}}
\DeclareMathOperator{\Exp}{\mathrm{Exp}}
\DeclareMathOperator{\I}{\mathrm{I}}
\DeclareMathOperator{\N}{\mathrm{N}}
\DeclareMathOperator{\D}{\mathrm{D}}
\DeclareMathOperator{\trace}{\mathrm{trace}}
\DeclareMathOperator{\sign}{\mathrm{sign}}
\DeclareMathOperator{\St}{\mathrm{St}}
\DeclareMathOperator*{\argmin}{arg\,min}
\def\R{\mathbb{R}}
\def\M{\mathcal{M}}
\begin{document}

\title{A Riemannian Proximal Newton-CG Method 
\footnotetext{Corresponding author: wen huang (\url{wen.huang@xmu.edu.cn}). WH was partially supported by the National Natural Science Foundation of China (No. 12001455 and No. 12371311), the National Natural Science Foundation of Fujian Province (No. 2023J06004), and Xiaomi youth scholar funding (2024). Wutao Si was partially supported by the National Natural Science Foundation of China (No. 12171403).}}
\author[1]{Wen Huang }
\author[1]{Wutao Si}
%\author[2]{P.-A. Absil}

\affil[1]{ School of Mathematical Sciences, Xiamen University, Xiamen, China.\vspace{.15cm}}
%\affil[2]{ICTEAM Institute, UCLouvain, Louvain-la-Neuve, Belgium}

\maketitle

%\setcounter{tocdepth}{2}
%\tableofcontents

\begin{abstract}
Recently, a Riemannian proximal Newton method has been developed for optimizing problems in the form of $\min_{x\in\mathcal{M}} f(x) + \mu \|x\|_1$, where $\mathcal{M}$ is a compact embedded submanifold and $f(x)$ is smooth. Although this method converges superlinearly locally, global convergence is not guaranteed. The existing remedy relies on a hybrid approach: running a Riemannian proximal gradient method until the iterate is sufficiently accurate and switching to the Riemannian proximal Newton method. This existing approach is sensitive to the switching parameter. This paper proposes a Riemannian proximal Newton-CG method that merges the truncated conjugate gradient method with the Riemannian proximal Newton method. The global convergence and local superlinear convergence are proven. Numerical experiments show that the proposed method outperforms other state-of-the-art methods.
\end{abstract}

\section{Introduction}

Many important applications can be formulated as minimizing a composite function on a Riemannian manifold:
\begin{equation} \label{eq:54}
\min_{x \in \mathcal{M}} F(x) = f(x) + h(x),
\end{equation}
where $\mathcal{M}$ is a Riemannian manifold, $f$ is smooth, and $h$ is continuous but may be nonsmooth. Such applications include but are not limited to sparse principal component analysis \cite{ZHT2006,ZX2018}, sparse partial least squares regression~\cite{CSGHY2018b}, compressed mode~\cite{OLCO2013}, clustering~\cite{HWGV2022,LYL2016,PZ2018}, image inpainting~\cite{PH2023}, and face expression recognition~\cite{FRJA2018}.

Recently, much attention has been paid to developing algorithms for solving Problem~\eqref{eq:54}. Many algorithms are inspired by the Euclidean proximal gradient-type methods~\cite{Beck2017,LSS2014}. In~\cite{CMSZ2019}, a proximal gradient method, named ManPG, is developed for the manifold $\mathcal{M}$ being the Stiefel manifold $\St(r, n) = \{X \in \mathbb{R}^{n \times r} \mid X^T X = I_r\}$. ManPG approximates the objective function $F$ around an iterate $x$ by a first-order approximation of $f$, i.e.,
$F(x+\eta) \approx f(x) + \inner[]{\nabla f(x)}{\eta} + \frac{L}{2} \|\eta\|_{\F}^2 + h(x + \eta)$, where $\eta \in \T_x \mathcal{M}$, $\T_x \mathcal{M}$ denotes the tangent space of $\mathcal{M}$ at $x$, $L > 0$ is a constant, and $\inner[]{}{}$ denotes the Euclidean inner product.
It follows that the update of ManPG is given by
\begin{align}
\eta_k =& \argmin_{\eta \in \T_{x_k} \mathcal{M}} \inner[]{\nabla f(x_k)}{\eta} + \frac{L}{2} \| \eta \|_{\F}^2 + h(x + \eta) \label{eq:55} \\
x_{k+1} =& R_{x_k}(\alpha_k \eta_k), \nonumber
\end{align}
with $\alpha_k$ being an appropriate step size, where $R$ is a retraction and plays the same role as the addition in the Euclidean space, see more details in Section~\ref{sec08}. It is proven in~\cite{CMSZ2019} that ManPG converges globally. Subsequently, a different version of the proximal gradient method, called RPG, is proposed in~\cite{HuaWei2019b}. The addition $x+\eta$ in~\eqref{eq:55}, well-defined only when the manifold has a linear ambient space, is replaced by $R_x(\eta)$. Thus, RPG allows the manifold to be generic, and convergence rates under Riemannian convexity and Riemannian KL property are also established. Despite the nice theoretical results, the RPG subproblem is generally difficult to solve, unlike ManPG where the subproblem in~\eqref{eq:55} can be solved efficiently by a semismooth Newton method. In~\cite{WY2023}, the idea of quasi-Newton update is further integrated with ManPG, and a proximal quasi-Newton method on manifolds, named ManPQN, is proposed. The second-order term $\frac{L}{2}\|\eta\|_{\F}^2$ in~\eqref{eq:55} is replaced by a weighted norm $\frac{1}{2}\|\eta\|_{B}^2 = \frac{1}{2} \inner[]{\eta}{B \eta}$, where the matrix $B$ is the diagonal entries in the LBFGS update. The local linear convergence rate is proven therein. The recent paper~\cite{SAHJV2024} shows that even though the search direction $\eta_k$ in ManPQN is computed using a second-order approximation of $f$, the local convergence rate is still linear at most generally since $x+\eta$ in~\eqref{eq:55} is only a first-order approximation of a nonlinear manifold. In~\cite{WY2024}, this difficulty is overcome by using $h(R_x(\eta))$ rather than $h(x+\eta)$, analogous to RPG, and an adaptive regularized proximal Newton algorithm (ARPN) is proposed. ARPN has desired theoretical results in the sense that it converges globally and has a local superlinear convergence rate under certain assumptions. However, the subproblem of ARPN is even more difficult to solve compared to RPG and no numerical experiments are given regarding ARPN in~\cite{WY2024}.
In~\cite{SAHJV2024}, a different approach is used to develop a Riemannian proximal Newton method. It is shown that the search direction $\eta_k$ of ManPG in~\eqref{eq:55} is a semismooth function of $x_k$ and a semismooth Newton method can be used to find a root of $\eta_k$. It follows that a Remannian proximal Newton method (RPN) is proposed and a local superlinear convergence rate is guaranteed. The local convergence rate is later improved to be quadratic in~\cite{SAHJV2024arxiv}. RPN is practical compared to ARPN since its subproblem involves computing~\eqref{eq:55} and solving a linear system, which can be solved efficiently.

Though RPN has a fast local convergence rate, the globalization of RPN is still problematic. In~\cite{SAHJV2024}, the authors suggest first using ManPG until the iterate is sufficiently close to a minimizer and then switching to RPN to ensure fast local convergence. However, the switching time is difficult to choose and a bad choice can influence the performance of the overall algorithm. In this paper, we integrate the truncated conjugate gradient method in~\cite{DS83NewtontCG} with RPN and propose a Riemannian proximal Newton-CG method, which guarantees global convergence and local superlinear convergence rate. Specifically, we consider the same optimization problems as those in~\cite{SAHJV2024}, that is, in the form of
\begin{equation} \label{eq:F}
\min_{x \in \mathcal{M}} F(x) = f(x) + h(x) \hbox{ with } h(x) = \mu \|x\|_1,
\end{equation}
where $\mathcal{M}$ is a $d$-dimensional compact embedded submanifold of a $n$-dimensional Euclidean space %$\mathcal{E}$ with inner product $\inner[]{\cdot}{\cdot}$ 
and the function $f$ is smooth. The Riemannian proximal Newton-CG method consists of three phases:
\begin{itemize}
    \item the first phase computes the ManPG proximal gradient direction as in~\eqref{eq:55};
    \item the second phase uses a truncated conjugate gradient to approximately solve the semismooth Newton equation; and
    \item the last phase updates the iterates by a retraction with appropriate step sizes.
\end{itemize}
Due to the nonlinearity of the manifold, the developments of the second and third phases are not trivial since either the difficulty does not exist in the Euclidean setting or simple generalizations of Euclidean versions do not work as expected. The contributions of this paper are summarized as follows:
\begin{itemize}
    \item The semismooth Newton equation in RPN, which does not have a symmetric coefficient matrix, is reformulated as a quadratic optimization problem. Note that the size of the optimization problem equals the number of nonzero entries in the iterate $x$.
    \item The early termination conditions in the truncated conjugate gradient are developed such that a descent direction is guaranteed if the early termination conditions take effect. Note that those early termination conditions differ from the existing ones in~\cite{DS83NewtontCG,NocWri2006}.
    \item If the truncated conjugate gradient terminates due to the accurate condition, then its output is a superlinear search direction under certain reasonable assumptions. Moreover, the function value is sufficiently descent with step size one every two steps when iterates are sufficiently close to a minimizer. Note that the function value may not even decrease with step size one even if the search direction is a quadratic convergence direction due to the curvature of retraction, as shown in Example~\ref{ex01}. Note that such difficulty does not appear for the proximal Newton method for Euclidean nonsmooth problems or the Riemannian Newton method for Riemannian smooth problems.
    \item The global convergence and local superlinear convergence are established.
    \item Extensive numerical experiments show that the proposed method performs as expected and is more efficient and effective than existing proximal gradient-type methods. 
\end{itemize}
There are also other types of algorithms for solving problems in the form of~\eqref{eq:54}, including an augmented Lagrangian method~\cite{ZBDZ2022}, and a Riemannian ADMM~\cite{LMS2022}. In~\cite{ZBDZ2022}, the global convergence of the augmented Lagrangian method is given and the local linear convergence rate is established later in~\cite{ZBD2022}. In~\cite{LMS2022}, a Riemannian ADMM is proposed with global convergence analysis. These two methods do not have local superlinear convergence results. Therefore, \emph{the proposed method is the first practical second-order algorithm that converges globally and locally superlinearly for nonsmooth optimization on manifolds.}

When the manifold $\mathcal{M}$ is the Euclidean space $\mathbb{R}^n$, the proposed Riemannian proximal Newton-CG method does not become any of the existing Euclidean algorithms, as far as we know. The closest approaches in the Euclidean setting first use the proximal gradient step to identify the underlying manifold structure and then optimize over the manifold structure to accelerate the algorithm, see~\cite{BIM2023,LFP2017}. Such existing approaches do not motivate the corresponding algorithms by the semismoothness of $\eta_k$ and finding a zero of $\eta_k$. The resulting algorithm is therefore different. See detailed comparisons in Remark~\ref{re01}.

This paper is organized as follows. Section~\ref{sec07} gives notation and preliminaries of Riemannian optimization and the existing Riemannian proximal Newton method. Section~\ref{sec09} states the proposed Riemannian proximal Newton-CG method. The global convergence and the local superlinear convergence rate are established therein. Numerical experiments are given in Section~\ref{sec12}. Finally, the conclusion is drawn in Section~\ref{sec13}.

\section{Notation and Preliminaries} \label{sec07}

The $n$-dimensional Euclidean space is denoted by $\mathbb{R}^n$. Note that $\mathbb{R}^n$ does not just refer to a vector space. It also refers to matrix space $\mathbb{R}^{s \times t}$ with $n = s t$, tensor space $\mathbb{R}^{n_1 \times \ldots \times n_d}$ with $\prod_{i = 1}^d n_i = n$, and any $n$-dimensional linear space. The Euclidean metric on $\mathbb{R}^n$ is $\inner[]{u}{v} = \sum_{i = 1}^n u_i^T v_i = u^T v$ for vector space, $\inner[]{U}{V} = \sum_{i = 1}^{n_1} \sum_{j = 1}^{n_2} U_{ij} V_{ij} = \trace(U^T V)$ for matrix space, and $\inner[]{\mathbf{U}}{\mathbf{V}} = \sum_{(i_1, \ldots, i_d) = (1, \ldots, 1)}^{(n_1, \ldots, n_d)} \mathbf{U}_{i_1 \ldots i_d} \mathbf{V}_{i_1 \ldots i_d}$ for tensor space. For any $v \in \mathbb{R}^n$, the Frobenius norm is $\|v\|_{\F} = \sqrt{\inner[]{v}{v}}$ and the one norm $\|v\|_1$ is the sum of the absolute values of all entries in $v$. $\mathrm{sign}(v) \in \mathbb{R}^n$ denotes the sign function, i.e., $(\mathrm{sign}(v))_i = -1$ if $v_i < 0$; $(\mathrm{sign}(v))_i = 0$ if $v_i = 0$; and $(\mathrm{sign}(v))_i = 1$ otherwise. For any $u, v \in \mathbb{R}^n$, $u \odot v$ denotes the Hadamard product, i.e., $u \odot v = w \in \mathbb{R}^n$ such that $w_i = u_i v_i$ for all $i$.
Given a linear operator $\mathcal{A}:\mathbb{R}^n \rightarrow \mathbb{R}^m$, denoted by $\mathcal{A} \in \mathbb{R}^{m \times n}$, the operator norm of $\mathcal{A}$ is denoted by $\|\mathcal{A}\|_2$. The adjoint operator of $\mathcal{A}$ is denoted by $\mathcal{A}^{\sharp}$, satisfying $\inner[]{\mathcal{A} v}{u} = \inner[]{v}{\mathcal{A}^{\sharp} u}$ for all $v \in \mathbb{R}^n$ and $u \in \mathbb{R}^m$. If $\mathcal{A} = \mathcal{A}^{\sharp}$, then $\mathcal{A}$ is called a symmetric (self-adjoint) operator. The largest and smallest singular values of $\mathcal{A}$ are respectively denoted by $\sigma_{\max}(\mathcal{A})$ and $\sigma_{\min}(\mathcal{A})$. 
If $\mathcal{A}$ is symmetric, $m = n$, and $\inner[]{u}{\mathcal{A}u} > (\geq) 0$ holds for all $u \in \mathbb{R}^n$, then $\mathcal{A}$ is called a symmetric positive definite (semidefinite) operator, denoted by $\mathcal{A} \succ (\succeq) 0$. 
The largest and smallest eigenvalues of $\mathcal{A}$ are respectively denoted by $\chi_{\max}(\mathcal{A})$ and $\chi_{\min}(\mathcal{A})$ when $m = n$.
Let $I_n \in \mathbb{R}^{n \times n}$ denote the identity operator from $\mathbb{R}^n$ to $\mathbb{R}^n$ and let $0_{n \times m}$ denote the zero operator from $\mathbb{R}^m$ to $\mathbb{R}^n$. Given a linear subspace $\mathcal{L}$ of $\mathbb{R}^n$, the orthogonal projection from $\mathbb{R}^n$ to $\mathcal{L}$ is denoted by $P_{\mathcal{L}}$.

\subsection{Riemannian manifold} \label{sec08}

The concepts of Riemannian geometry and Riemannian optimization tools can be found in the standard literature, e.g.,~\cite{AMS2008,Boo1986,boumal2020intromanifolds}. The notation of this paper follows from~\cite{AMS2008}. In this paper, the manifold $\mathcal{M}$ refers to a finite-dimensional compact Riemannian embedded submanifold of $\mathbb{R}^n$. Roughly speaking, one can view a compact Riemannian embedded submanifold of $\mathbb{R}^n$ as a bounded, smooth surface in $\mathbb{R}^n$. 
We refer to~\cite{AMS2008,Lee2012} for more details. 
The tangent space of $\mathcal{M}$ at $x$ is denoted by $\T_x \mathcal{M}$ and the union of all tangent spaces, denoted by $\T \mathcal{M}$, is the tangent bundle of $\mathcal{M}$. The Riemannian metric of $\mathcal{M}$ at $x$, denoted by $\inner[x]{\cdot}{\cdot}$, defines an inner product in $\T_x \mathcal{M}$. The induced norm in $\T_x \mathcal{M}$ is denoted by $\|\cdot\|_x$.
Since $\mathcal{M}$ is a Riemannian embedded submanifold of $\mathbb{R}^n$, the tangent space $\T_x \mathcal{M}$ is a linear subspace of $\mathbb{R}^n$.
The Riemannian metric on $\mathcal{M}$ is endowed from its embedding Euclidean space and therefore is given by $\inner[x]{\eta_x}{\xi_x} = \inner[]{\eta_x}{\xi_x}$ for all $\eta_x, \xi_x \in \T_x \mathcal{M}$. The orthogonal complement space of $\T_x \mathcal{M}$, denoted by $\N_x \mathcal{M} = \T_x^{\perp} \mathcal{M}$, is the normal space of $\mathcal{M}$ at $x$. Let $B_x = \{b_1, b_2, \ldots, b_{n - d}\}$ denote an orthonormal basis of $\N_x \mathcal{M}$. Define linear operators $B_x: \mathbb{R}^{n - d} \rightarrow \N_x \mathcal{M}: v \mapsto \sum_{i = 1}^{n-d} v_i b_i$ and $B_x^T: \mathbb{R}^n \rightarrow \mathbb{R}^{n - d}: u \mapsto ( \inner[]{u}{b_1}, \inner[]{u}{b_2}, \ldots, \inner[]{u}{b_{n-d}})^T$. Therefore, the tangent space $\T_x \mathcal{M}$ can be characterized as $\T_x \mathcal{M} = \{ v \in \mathbb{R}^n \mid B_x^T v = 0\}$. By~\cite{HAG2016VT,SAHJV2024}, one can always choose $B_x$ to be a smooth mapping of $x$ in a sufficiently small neighborhood of $x$ for any $x \in \mathcal{M}$. Such a smooth mapping is useful in the local convergence analysis in Section~\ref{sec03}.

Given a smooth function $f$ on $\mathcal{M}$, the Riemannian gradient at $x$ is the unique tangent vector given by
$
\grad f(x) = P_{\T_x \mathcal{M}} (\nabla f(x)) \in \T_x \mathcal{M},
$
where $\nabla f(x)$ denotes the Euclidean gradient of $f$ at $x$. The Riemannian Hessian of $f$ at $x$ is a linear operator satisfying
\begin{align*}
\Hess f(x):\T_x \mathcal{M} \rightarrow \T_x \mathcal{M}: \eta_x \mapsto &\Hess f(x) [\eta_x] = P_{\T_x \mathcal{M}} (\D \grad f(x) [\eta_x] ) \\
=& P_{\T_x \mathcal{M}} ( \nabla^2 f(x) [\eta_x]) + \mathcal{W}_x^{\mathcal{M}}(\eta_x, P_{\N_x \mathcal{M}}(\nabla f(x)) ),
\end{align*}
where $\D \grad f(x) [\eta_x]$ denotes the direction derivative of $\grad f(x)$ along $\eta_x$, $\nabla^2 f(x)$ denotes the Euclidean Hessian of $f$ at $x$, and $\mathcal{W}_x^{\mathcal{M}}$ denotes the Weingarten map of $\mathcal{M}$ at $x$, i.e.,
$
\mathcal{W}_x^{\mathcal{M}}: \T_x \mathcal{M} \times \N_x \mathcal{M} \rightarrow \T_x \mathcal{M}:(w, u) = (\D (x \mapsto P_{\T_x \mathcal{M}})(x) [w]) u.
$
Note that the Weingarten map is a symmetric operator on $\T_x \mathcal{M}$ in the sense that given $u \in \N_x \mathcal{M}$, it holds that $\inner[x]{\xi_x}{\mathcal{W}_x^{\mathcal{M}}(\eta_x, u)} = \inner[x]{\eta_x}{\mathcal{W}_x^{\mathcal{M}}(\xi_x, u)}$ for all $\eta_x, \xi_x \in \T_x \mathcal{M}$.

The notion of retraction on a manifold is used to update the iterates of an algorithm and can be viewed as a generalization of addition in a Euclidean space, i.e., $x_{k+1} = R_{x_k}( \alpha_k \eta_{x_k})$ generalizes from $x_{k+1} = x_k + \alpha_k \eta_{x_k}$. Rigorously, a retraction is a smooth mapping from the tangent bundle $\T \mathcal{M}$ to $\mathcal{M}$ such that (i) $R_x(0_x) = x, \forall x\in \mathcal{M}$, where $0_x$ is the zero vector in $\T_x \mathcal{M}$; and (ii) the differential of $R_x$ at $0_x$ is the identity map, i.e., $\D R_x(0_x) = \mathrm{id}$. For any $x \in \mathcal{M}$, there exists a neighborhood of $x$ such that $R_x$ is a diffeomorphism in the neighborhood. Moreover, for a compact embedded submanifold $\mathcal{M}$, there exist two constants $C_{R_1}$ and $C_{R_2}$ such that
\begin{align}
\|R_x(\eta_x) - x\|_{\F} \leq C_{R_1} \|\eta_x\|_{\F} \label{eq:26} \\
\|R_x(\eta_x) - x - \eta_x\|_{\F} \leq C_{R_2} \|\eta_x\|_{\F}^2	\label{eq:27}
\end{align}
hold for all $x \in \mathcal{M}$ and $\eta_x \in \T_x \mathcal{M}$, see e.g.,~\cite{CMSZ2019}. One important retraction is the exponential mapping, denoted by $\Exp$, which is defined by $\Exp_x(\eta_x) = \gamma(1)$, $\gamma:I \rightarrow \mathcal{M}$ is the geodesic satisfying $\gamma(0) = x$ and $\gamma'(0) = \eta_x$, and $[0, 1] \subset I$ is an open interval.

\subsection{The existing Riemannian proximal Newton method} \label{sec04}

The existing Riemannian proximal Newton method (RPN) proposed in~\cite{SAHJV2024} is stated in Algorithm~\ref{alg:RPN}.
\begin{algorithm}[h]
\caption{A Riemannian proximal Newton method (RPN)}
\label{alg:RPN}
\begin{algorithmic}[1] 
\Require A $d$-dimensional embedded submanifold $\M$ of $\mathbb{R}^n$, an initial iterate $x_0 \in \mathcal{M}$, a parameter $t > 0$;
\For{$k = 0,1,\dots$}
\State \label{alg:RPN:st01} Compute $v(x_k)$ by solving 
\begin{equation} \label{eq:subforv}
v(x_k) = \argmin_{v \in \T_{x_k} \M}\ f(x_k) + \inner[]{\nabla f(x_k)}{v} + \frac{1}{2t} \|v\|_{\F}^2 + h(x_k + v).
\end{equation}
\State \label{alg:RPN:st02} Find $u(x_k) \in \T_{x_k}\M$ by solving
\begin{equation}\label{3-3}
    J(x_k) [u(x_k)] = - v(x_k),
\end{equation}
where 
\begin{equation}\label{eq:Jsol}
J(x_k) = -\left[\I_n - \Lambda_{x_k} + t\Lambda_{x_k} (\nabla^2 f(x_k) - \mathcal{L}_{x_k})\right],
\end{equation}
$\Lambda_{x_k} = M_{x_k} -  M_{x_k} B_{x_k} H_{x_k} B_{x_k}^{\T} M_{x_k}$, $H_{x_k} = \left(B_{x_k}^{\T} M_{x_k} B_{x_k}\right)^{-1}$, 
$B_{x_k}$ is an orthonormal basis of $\N_{x_k} \mathcal{M}$, 
$\mathcal{L}_{x_k}(\cdot) = \mathcal{W}_{x_k}\big(\cdot,B_{x_k} \lambda(x_k)\big)$, $\mathcal{W}_{x_k}$ denotes the Weingarten map, 
$\lambda(x_k)$ is the Lagrange multiplier~\eqref{3-1} at $x_k$, and $M_{x_k} \in \partial \prox_{t h}(x_k - t [\nabla f(x_k)  + B_{x_k} \lambda(x_k)])$ is a diagonal matrix defined by
\begin{equation} \label{eq:SpecialM}
(M_{x_k})_{i i} = 
\left\{
\begin{array}{cc}
    0 & \hbox{ if $|x_k - t [\nabla f(x_k)  + B_{x_k} \lambda(x_k)]|_i \leq t \mu$; } \\
    1 & \hbox{ otherwise.}
\end{array}
\right.
\end{equation}
\State $x_{k+1} = R_{x_k}\left( u(x_k)\right)$;
\EndFor
\end{algorithmic}
\end{algorithm}
Step~\ref{alg:RPN:st01} of Algorithm~\ref{alg:RPN} computes the Riemannian proximal gradient direction which is defined in~\cite{CMSZ2019,HuaWei2021}. It is proven that the direction $v(x_k)$ is a descent direction at $x_k$. %Moreover, if the $h(x_k + v)$ is a zero function, i.e., $h \equiv 0$, then $v(x_k) = - t \grad f(x_k)$, which implies that $v(x_k)$ is the steepest descent direction. 
A semismooth Newton algorithm has been proposed in~\cite{CMSZ2019} to solve~\eqref{eq:subforv} efficiently and it usually takes 2-3 iterations on average to find a high accurate solution~\cite{CMSZ2019,HuaWei2019,HuaWei2019b}. Specifically, noting that $v \in \T_{x_k} \mathcal{M}$ is equivalent to $B_{x_k}^T v = 0$, the KKT condition of~\eqref{eq:subforv} therefore is given by
\begin{equation} \label{eq:KKT01}
\partial_v \mathscr{L}_k(v, \lambda) = 0, \hbox{ and } B_{x_k}^T v = 0, 
\end{equation}
where $\mathscr{L}_k(v, \lambda) = f(x_k) + \inner[]{\nabla f(x_k)}{v} + \frac{1}{2t} \|v\|_{\F}^2 + h(x_k + v) + \lambda^T B_{x_k}^{T} v$ is the Lagrange function and $\lambda \in \mathbb{R}^{n-d}$ is the Lagrange multiplier. 
Equations in~\eqref{eq:KKT01} yield that
\begin{equation}\label{3-1}
v=\prox_{t h} \bigl( x_k - t \left[\nabla f(x_k) + B_{x_k} \lambda \right] \bigr) - x_k, \hbox{ and } B_{x_k}^{T} v = 0,
\end{equation}
where $ \mathrm{prox}_{t h}(z) $ denotes the proximal mapping of $t h$, i.e., 
\begin{equation} \label{eq:EProx}
\mathrm{prox}_{t h}(z) = \argmin_{x \in \mathbb{R}^n} \frac{1}{2} \|x - z\|_{\F}^2 + t h(x) = \max(|z| - t \mu, 0) \odot \sign(z).
\end{equation}
An equation of $\lambda$ given by
\begin{equation} \label{eq:Psilambda}
B_{x_k}^T \left( \prox_{t h} \bigl( x_k - t \left[\nabla f(x_k) + B_{x_k} \lambda \right] \bigr) - x_k \right) = 0
\end{equation}
follows from~\eqref{3-1}. By the semismoothness of $\prox_{t h}$, Equation~\eqref{eq:Psilambda} can be solved efficiently by a semismooth Newton method and $v(x_k)$ is computed by the first equation in~\eqref{3-1}.

Step~\ref{alg:RPN:st02} of Algorithm~\ref{alg:RPN} computes the Riemannian proximal Newton direction by solving a Newton equation in~\eqref{3-3} motivated by the semismooth Newton method~\cite{QS2006,XLWZ2018}. Specifically, the search direction $v(x_k)$ is semismooth with respect to $x_k$, as proven in~\cite{SAHJV2024}. The linear operator $J(x_k)$ is motivated by the generalized Jacobi of $v$ at $x_k$. In particular, when $x_k$ is at a stationary point $x_*$, $J(x_*)$ is a generalized Jacobi of $v$ at $x_*$. It is proven in~\cite{SAHJV2024} that Algorithm~\ref{alg:RPN} converges superlinearly locally under certain reasonable assumptions.

It has been proven in~\cite[Lemma~3.2]{SAHJV2024} that the $i$-th entry in $x_k + v(x_k)$ is zero if and only if $(M_{x_k})_{ii}$ is zero. We say that an index $i$ is in the support of $x_k + v(x_k)$ if the $i$-th entry of $x_k + v(x_k)$ is nonzero. Without loss of generality, we assume that the first $j_k$ entries of $x_k + v(x_k)$ are the support. % and the last $n - j_k$ entries are zero throughout this paper. 
It follows that $M_{x_k} = \begin{pmatrix} I_{j_k} & \\ & 0_{n-j_k,n-j_k} \end{pmatrix}$. Given any $A_{x_k} \in \mathbb{R}^{n \times m}$, define $\bar{\cdot}$ and $\hat{\cdot}$ operator as a partition of $A_{x_k}$, i.e., $\bar{A}_{x_k} \in \mathbb{R}^{j_k \times m}$ is the first $j_k$ rows of $A_{x_k}$ and $\hat{A}_{x_k} \in \mathbb{R}^{(n - j_k) \times m}$ is the last $n-j_k$ rows of $A_{x_k}$. Therefore, $A_{x_k} = \begin{pmatrix} \bar{A}_{x_k} \\ \hat{A}_{x_k} \end{pmatrix}$. It follows that
\begin{gather*}
x_k = \begin{pmatrix}
	\bar{x}_k \\
	\hat{x}_k
\end{pmatrix},
v(x_k) = \begin{pmatrix}
 \bar{v}(x_k) \\
 \hat{v}(x_k)	
 \end{pmatrix},
u(x_k) = \begin{pmatrix}
	\bar{u}(x_k) \\
	\hat{u}(x_k)
\end{pmatrix},
B_{x_k} = \begin{pmatrix}
	\bar{B}_{x_k} \\
	\hat{B}_{x_k}
\end{pmatrix}, 
\\
\nabla f(x_k) = \begin{pmatrix}
	\overline{\nabla f(x_k)} \\
	\widehat{\nabla f(x_k)}
\end{pmatrix},
\nabla^2 f(x_k) = \begin{pmatrix}
	H_{x_k}^{(11)} & H_{x_k}^{(12)} \\
	H_{x_k}^{(21)} & H_{x_k}^{(22)}
\end{pmatrix},
\mathcal{L}_{x_k} = \begin{pmatrix}
	L_{x_k}^{(11)} & L_{x_k}^{(12)} \\
	L_{x_k}^{(21)} & L_{x_k}^{(22)}
\end{pmatrix}, \\
\mathfrak{B}_{x_k} = \nabla^2 f(x_k) - \mathcal{L}_{x_k} =  \begin{pmatrix}
	\mathfrak{B}_{x_k}^{(11)} & \mathfrak{B}_{x_k}^{(12)} \\
	\mathfrak{B}_{x_k}^{(21)} & \mathfrak{B}_{x_k}^{(22)}
\end{pmatrix},
\mathcal{B}_{x_k} = \mathfrak{B}_{x_k}^{(11)} = H_{x_k}^{(11)} - L_{x_k}^{(11)},
\end{gather*}
where the partitions are based on the size of nonzero entries, i.e., $\bar{x}_k \in \mathbb{R}^{j_k}$, $\bar{v}(x_k) \in \mathbb{R}^{j_k}$, $\bar{u}(x_k) \in \mathbb{R}^{j_k}$, $\bar{B}_{x_k} \in \mathbb{R}^{j_k \times (n - d)}$, $H_{x_k}^{(11)} \in \mathbb{R}^{j_k \times j_k}$, and $L_{x_k}^{(11)} \in \mathbb{R}^{j_k \times j_k}$. It follows from~\eqref{eq:Jsol} that
\begin{equation*}
J(x_k) = - 
\begin{pmatrix}
	\bar{B}_{x_k} \bar{B}_{x_k}^{\dagger} + t (I_{j_k} - \bar{B}_{x_k} \bar{B}_{x_k}^{\dagger}) \mathcal{B}_{x_k} & t (I_{j_k} - \bar{B}_{x_k} \bar{B}_{x_k}^{\dagger}) \mathfrak{B}_{x_k}^{(12)} \\
	0_{(n - j_k) \times j_k} & I_{n - j_k}
\end{pmatrix},
\end{equation*}
where $\dagger$ denotes the pseudo-inverse and it is assumed that $\bar{B}_{x_k}$ has full column rank. Therefore, the direction $u(x_k)$ is obtained by solving
\begin{align} \label{eq:10}
\left\{
\begin{array}{c}
	[\bar{B}_{x_k} \bar{B}_{x_k}^{\dagger} + t (I_{j_k} - \bar{B}_{x_k} \bar{B}_{x_k}^{\dagger}) \mathcal{B}_{x_k}] \bar{u}(x_k) = \bar{v}(x_k) - t (I_{j_k} - \bar{B}_{x_k} \bar{B}_{x_k}^{\dagger}) \mathfrak{B}_{x_k}^{(12)} \hat{u}(x_k) \\
	\hat{u}(x_k) = \hat{v}(x_k)
\end{array}
\right.
.
\end{align}
Thus, only a linear system with the size of the nonzero entries in $x_k + v(x_k)$ needs to be solved for the Newton direction $u(x_k)$. This implies the sparser the solution is, the more efficient computing the superlinear direction $u(x_k)$ is. As shown in Section~\ref{sec01}, Equation~\eqref{eq:10} can be reformulated as a quadratic optimization problem under reasonable assumption even if the coefficient matrix is not symmetric.

An optimality condition of Problem~\eqref{eq:F}, stated in Proposition~\ref{p01}, is given in~\cite{SAHJV2024}. 
\begin{proposition} \label{p01}
If $x_* = \begin{pmatrix}
\bar{x}_* \\ 0
\end{pmatrix}$
is a local minimizer with $\bar{x}_* \in \mathbb{R}^j$ and $\bar{B}_{x_*}$ has full column rank. Then $v(x_*) = 0$ and $\mathcal{B}_{x_*} \succeq 0$ on the subspace $\mathfrak{L}_{x_*}$, where $\mathfrak{L}_x$ is defined by
$\mathfrak{L}_x = \{w: \bar{B}_{x}^{T} w = 0\}$.
\end{proposition}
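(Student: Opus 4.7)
The plan is to split the claim into two pieces: $v(x_*)=0$ is a first-order statement that falls out of the Riemannian first-order optimality condition together with the KKT system~\eqref{3-1} of the subproblem~\eqref{eq:subforv}, while $\mathcal{B}_{x_*}\succeq 0$ on $\mathfrak{L}_{x_*}$ is a second-order statement that I would obtain by comparing $F$ with a smooth surrogate $\tilde F(x)=f(x)+s^\T x$ along a curve on $\mathcal{M}$ whose acceleration is tuned so that the $\ell_1$ penalty at the non-support coordinates is relegated to order $t^3$.

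For $v(x_*)=0$, the Riemannian first-order optimality condition for $F=f+h$ on $\mathcal{M}$ provides a subgradient $s\in\partial h(x_*)=\mu\partial\|x_*\|_1$ and a vector $\lambda_*\in\R^{n-d}$ with $\nabla f(x_*)+s+B_{x_*}\lambda_*=0$. Under the full column rank hypothesis on $\bar B_{x_*}$, the restriction to the support uniquely determines $\lambda_*$, and this is the $\lambda(x_k)$ used in Step~\ref{alg:RPN:st02} evaluated at $x_k=x_*$. Substituting $(v,\lambda)=(0,\lambda_*)$ into~\eqref{3-1} and invoking the standard equivalence $\prox_{th}(x+t\xi)=x\Leftrightarrow \xi\in\partial h(x)$ verifies both KKT equations, and the strong convexity of~\eqref{eq:subforv} then forces $v(x_*)=0$.

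For the second-order part, fix $w\in\mathfrak{L}_{x_*}$ and set $\tilde w=(w,0)^\T\in\R^n$; the identity $B_{x_*}^\T\tilde w=\bar B_{x_*}^\T w=0$ places $\tilde w$ in $\T_{x_*}\mathcal{M}$. I would construct a smooth curve $\gamma:(-\varepsilon,\varepsilon)\to\mathcal{M}$ with $\gamma(0)=x_*$, $\gamma'(0)=\tilde w$, and the additional requirement $\hat\gamma''(0)=0$. Here the rank assumption does the crucial work: the admissible accelerations of curves on $\mathcal{M}$ through $x_*$ with velocity $\tilde w$ form an affine space $n_0+\T_{x_*}\mathcal{M}$, where $n_0\in\N_{x_*}\mathcal{M}$ is dictated by the second derivatives of a local defining function. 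Writing the tangent correction as $\tau$, the requirement $\hat\gamma''(0)=0$ reduces to $\bar B_{x_*}^\T\bar\tau=\hat B_{x_*}^\T\hat n_0$, which is solvable because $\bar B_{x_*}^\T$ is surjective under the rank hypothesis. Concretely, $\gamma(t)=R_{x_*}(t\tilde w+t^2\xi/2)$ for an appropriate retraction $R$ and a suitable $\xi\in\T_{x_*}\mathcal{M}$ realizes such a curve.

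Along this $\gamma$, for $i\leq j$ the component $\gamma_i(t)$ retains the sign of $\bar x_{*,i}$ and $|\gamma_i(t)|$ is smooth in $t$, while for $i>j$ the conditions $\gamma_i(0)=\gamma_i'(0)=\gamma_i''(0)=0$ yield $|\gamma_i(t)|=O(t^3)$. Since $h(x_*)=s^\T x_*$ and $h(\gamma(t))=s^\T\gamma(t)+O(t^3)$, one has $F(\gamma(t))-F(x_*)=\tilde F(\gamma(t))-\tilde F(x_*)+O(t^3)$. Expanding $\tilde F\circ\gamma$ to second order, the linear term $\nabla\tilde F(x_*)^\T\tilde w$ vanishes because $\nabla\tilde F(x_*)=-B_{x_*}\lambda_*\in\N_{x_*}\mathcal{M}$ is orthogonal to $\tilde w$, and the Weingarten identity
\[
\lambda_*^\T B_{x_*}^\T\gamma''(0)=\inner[]{\tilde w}{\mathcal{W}_{x_*}(\tilde w,B_{x_*}\lambda_*)}=\inner[]{\tilde w}{\mathcal{L}_{x_*}\tilde w}
\]
identifies the second-order coefficient as $\tfrac{1}{2}\tilde w^\T(\nabla^2 f(x_*)-\mathcal{L}_{x_*})\tilde w=\tfrac{1}{2}w^\T\mathcal{B}_{x_*}w$. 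Local minimality then gives $F(\gamma(t))\geq F(x_*)$ for small $t$, and dividing by $t^2$ and sending $t\to 0$ yields $w^\T\mathcal{B}_{x_*}w\geq 0$. The main obstacle is the second-order book-keeping of the $\ell_1$ penalty at the non-support coordinates; the choice $\hat\gamma''(0)=0$ is exactly what pushes this contribution into the $O(t^3)$ remainder, and the full column rank of $\bar B_{x_*}$ is exactly what makes that choice available.
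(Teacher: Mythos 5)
The paper does not give a self-contained proof of Proposition~\ref{p01}; it is imported verbatim from the cited reference, and the only in-paper glimpse of the intended argument is in the proof of Lemma~\ref{le06}, which works with a smooth curve $\gamma$ living \emph{inside} the submanifold $\mathcal{M}_{\mathrm{sub}}=\{x\in\Omega_{x_*}:\hat x=0\}$, so that $\hat\gamma(t)\equiv 0$, $F\circ\gamma$ is $C^2$, and $(F\circ\gamma)''(0)=u^{\T}\mathcal{B}_{x_*}u$ directly. Your proof is correct, and it takes a genuinely different (if kindred) route for the second-order part: instead of invoking the submanifold structure of $\mathcal{M}_{\mathrm{sub}}$ you stay on $\mathcal{M}$, pick a curve of the form $\gamma(t)=R_{x_*}(t\tilde w+\tfrac{t^2}{2}\xi)$, and exploit the surjectivity of $\bar B_{x_*}^{\T}$ (equivalent to full column rank of $\bar B_{x_*}$) to choose the tangential correction $\xi$ so that $\hat\gamma''(0)=0$; this pushes the nonsmooth $\ell_1$ contribution at the non-support coordinates into an $O(t^3)$ remainder, after which the second-order coefficient of the smooth surrogate $\tilde F=f+s^{\T}\cdot$ is identified with $\tfrac12 w^{\T}\mathcal{B}_{x_*}w$ via the Weingarten identity $\inner[]{B_{x_*}\lambda_*}{\gamma''(0)}=\inner[]{\tilde w}{\mathcal{W}_{x_*}(\tilde w,B_{x_*}\lambda_*)}$. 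Both approaches consume the rank hypothesis at the same place (yours to solve the acceleration constraint, the paper's to establish that $\mathcal{M}_{\mathrm{sub}}$ is a manifold), but yours avoids the submanifold formalism at the cost of some order-counting; the paper's version is cleaner because $\|\cdot\|_1$ becomes genuinely smooth along the curve and no remainder estimate is needed. Your first-order argument for $v(x_*)=0$ (Riemannian stationarity gives $\nabla f(x_*)+s+B_{x_*}\lambda_*=0$ with $s\in\partial h(x_*)$, then $(v,\lambda)=(0,\lambda_*)$ satisfies~\eqref{3-1} and strong convexity of~\eqref{eq:subforv} forces uniqueness) is correct and is the standard one.

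Two small points worth tightening in a final write-up: (i) be explicit that $\bar B_{x_*}\in\mathbb{R}^{j\times(n-d)}$ and that full column rank forces $j\geq n-d$, so that $\mathfrak{L}_{x_*}$ is the right dimension and $\bar B_{x_*}^{\T}$ is indeed onto $\mathbb{R}^{n-d}$ (the paper's Assumption~\ref{as03} writes $j\times d$ and $j\geq d$, which appears to be a typo); and (ii) when you compute $\gamma''(0)=\D^2 R_{x_*}(0_{x_*})[\tilde w,\tilde w]+\xi$, state once that this decomposition is why the normal part of $\gamma''(0)$ equals the second fundamental form $II(\tilde w,\tilde w)$ regardless of the choice of $\xi$, which is what makes the Weingarten step independent of the curve chosen.
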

To prove the local superlinear convergence of the proposed method, we assume that the linear operator $\mathcal{B}_{x_*}$ is positive definite on $\mathfrak{L}_{x_*}$, which is slightly stronger than that in Proposition~\ref{p01}, as shown in Section~\ref{sec03}.

\section{A Riemannian Proximal Newton-CG Method} \label{sec09}

The proposed Riemannian proximal Newton-CG method (RPN-CG) is stated in Algorithm~\ref{alg:IRPN}. But we invite the readers to first read the more reader-friendly descriptions in the next few paragraphs and to refer to the pseudocode in Algorithm~\ref{alg:IRPN} when needed.

\begin{algorithm}%[H]
\caption{A Riemannian proximal Newton-CG method (RPN-CG)}
\label{alg:IRPN}
\begin{algorithmic}[1] 
\Require A $d$-dimensional embedded submanifold $\M$ of $\mathbb{R}^n$; $x_0 \in \mathcal{M}$; an initial step size $\alpha_{\mathrm{init}}$; line search parameters $\rho_1, \rho_2 \in (0, 1)$;  two parameters $t_{\max} \geq t_{\min} > 0$ for proximal mapping; $\varpi_1 > 1$ and $\varpi_2 \in (0, 1)$ for the update of $t$; $\vartheta > 0$, $\gamma > 0$, $\tau > 0$, $\theta > 0$, and $\kappa \in (0, 1)$ for Algorithm~\ref{alg:tCG};
\State Set $\mathrm{flag} = 0$ and $k = 0$;
\Loop
\State \label{a:RNG:st01} Compute %$v(x_k)$ by solving 
% \begin{equation} \label{eq:08}
$v(x_k) = \argmin_{v \in \T_{x_k} \M}\ f(x_k) + \inner[]{\nabla f(x_k)}{v} + \frac{1}{2t_k} \|v\|_{\F}^2 + h(x_k + v)$;
% \end{equation}
\State \label{a:RNG:st02} 
Invoke Algorithm~\ref{alg:tCG} with $[\ell_{x_k,t_k}, \mathcal{B}_{x_k}, \mathfrak{B}_{x_k}, t_k, \vartheta, \gamma, \tau, \theta, \kappa, v(x_k), P_{x_k}, G_{x_k}]$ to approximately solve
\begin{equation} \label{eq:09}
\argmin_{ \bar{B}_{x_k}^T w = 0 } \inner[]{\ell_{x_k,t_k}}{w} + \frac{1}{2} \inner[]{w}{\mathcal{B}_{x_k} w}
\end{equation}
and the output is denoted by $(w(x_k), \mathrm{status})$,
where $\ell_{x,t} = \frac{1}{t} ( -I_{j} + t \mathcal{B}_{x} ) \bar{v}(x) + \mathfrak{B}_{x}^{(12)} \hat{v}(x)$, $P_x = I_n - \bar{B}_x \bar{B}_x^{\dagger}$ is the orthogonal projection onto the null space of $\bar{B}_x$, and $G_{x}(u) = f(x) + \inner[]{\nabla f(x)}{u} + \frac{1}{2} \inner[]{u}{\mathfrak{B}_x u} + \frac{\tau}{2} \|\hat{u}(x)\|_{\F}^2 + h(x + u)$;
\State \label{a:RNG:st03} Compute $d(x_k)$ by
% \begin{equation} \label{eq:13}
$d(x_k) = \begin{pmatrix}
	\bar{d}(x_k) \\
	\hat{d}(x_k)
\end{pmatrix}
 = \begin{pmatrix}
	\bar{v}(x_k) + w(x_k) \\
	\hat{v}(x_k)
\end{pmatrix}$;
% \end{equation}
\If { $(4 + 1/t_k) \|d(x_k)\|_{\F} < \|v(x_k)\|_{\F}$ or $\mathrm{status} = 'early1'$ } \label{a:RNG:st17}
\State $t_{k+1} = \max( \varpi_2 t_k, t_{\min})$; \label{a:RNG:st18}
\ElsIf {$\mathrm{status} \neq 'sup'$} \label{a:RNG:st19}
\State $t_{k+1} = \min(\varpi_1 t_k, t_{\max})$; \label{a:RNG:st20}
\EndIf \label{a:RNG:st21}
\If {$\mathrm{status} \neq 'sup'$ and $\mathrm{flag} \neq 1$}
\State \label{a:RNG:st04} Set $\mathrm{flag} = 0$ and $\alpha_k = \alpha_{\mathrm{init}}$;
\While { $F(R_{x_k}( \alpha_k d(x_k) )) > F(x_k) - \alpha_k \rho_1 \|d(x_k)\|_{\F}^2$ } \label{a:RNG:st15}
\State $\alpha_k = \alpha_k \rho_2$;
\EndWhile \label{a:RNG:st05}
\State Set $x_{k+1} = R_{x_k}\left( \alpha_k d(x_k)\right)$ and $k = k + 1$; \label{a:RNG:st06}
\Else
\State Set $\mathrm{flag} = \mathrm{flag} + 1$; \label{a:RNG:st07}
\If {$\mathrm{flag} = 1$}
\State Set $x_{k+1} = R_{x_k}\left( d(x_k)\right)$ and $k = k + 1$; \Comment{The step size is one} \label{a:RNG:st09}
\Else
\State Set $x_{k+1} = R_{x_k}\left( d(x_k)\right)$ and $\mathrm{flag} = 0$; \label{a:RNG:st10}
\If { $F(x_{k+1}) > F(x_{k-1}) - \rho_1 \|v(x_{k-1})\|_{\F}^2$ } \Comment{Not sufficiently descent in two steps} \label{a:RNG:st11}
\State Set $\alpha_{k-1} = \alpha_{\mathrm{init}}$; \Comment{Backtracking at $x_{k-1}$} \label{a:RNG:st12}
\While { $F(R_{x_{k-1}}( \alpha_{k - 1} d(x_{k-1}))) > F(x_{k-1}) - \alpha_{k - 1} \rho_1 \|d(x_{k-1})\|_{\F}^2$ } \label{a:RNG:st16}
\State $\alpha_{k-1} = \alpha_{k-1} \rho_2$;
\EndWhile
\State Set $x_{k} = R_{x_{k-1}}\left( \alpha_{k-1} d(x_{k-1})\right)$; \label{a:RNG:st13}
\Else \Comment{Sufficiently descent in two steps}
\State Set $k = k + 1$; \label{a:RNG:st14}
\EndIf
\EndIf \label{a:RNG:st08}
\EndIf
\EndLoop
\end{algorithmic}
\end{algorithm}

Step~\ref{a:RNG:st01} of the RPN-CG in Algorithm~\ref{alg:IRPN} is the same as Step~\ref{alg:RPN:st01} of the RPN in Algorithm~\ref{alg:RPN}, which computes a proximal gradient direction. 
RPN-CG does not compute the proximal Newton direction $u(x_k)$ exactly. Instead, an approximation $d(x_k)$ of $u(x_k)$, computed from Step~\ref{a:RNG:st02} to Step~\ref{a:RNG:st03}, is used as the search direction of the RPN-CG in Algorithm~\ref{alg:IRPN}. It is shown later in Lemma~\ref{le03} that $d(x_k)$ is a descent direction. It follows that a line search strategy from Step~\ref{a:RNG:st04} to Step~\ref{a:RNG:st05} terminates in finitely many iterations and the next iterate $x_{k+1}$ is computed by Step~\ref{a:RNG:st06}. Moreover, if the iterate $x_k$ is sufficiently close to a local minimizer $x_*$ of $F$ in~\eqref{eq:F}, then the search direction $d(x_k)$ is a good approximation of the proximal Newton direction $u(x_k)$ in~\eqref{3-3}, as shown in Section~\ref{sec03}. If the step size one is used, then the convergence rate is superlinear. However, due to the nonsmoothness of the objective function and the curvature of the retraction, the search direction $d(x_k)$ with step size one may not give a decrease in the function $F$. But it will be shown later in Proposition~\ref{p02} that the function value is sufficiently descent with step size one every two steps when $x$ is sufficiently close to~$x_*$, i.e., 
\begin{equation} \label{eq:67}
F(x_{k+1}) \leq F(x_{k-1}) - \rho_1 \|v(x_{k-1})\|_{\F}^2.
\end{equation}
This motivates us to design Steps~\ref{a:RNG:st07} to~\ref{a:RNG:st08} of Algorithm~\ref{alg:IRPN}. If $\mathrm{flag} = 0$, then the backtracking algorithm from Step~\ref{a:RNG:st04} to~\ref{a:RNG:st06} is used. % without attempting to use step size one. 
This happens usually when $x$ is not close to $x_*$. If $\mathrm{flag} = 1$, then the step size one is used without checking the sufficient descent condition, see Step~\ref{a:RNG:st09}. If $\mathrm{flag} = 2$, then the step size one is again used without checking the sufficient descent condition, see Step~\ref{a:RNG:st10}. In this case, the previous two iterates are computed using step size one. Therefore, we check if~\eqref{eq:67} holds. If it does, then the previous two iterates are valid, see Step~\ref{a:RNG:st14}. Otherwise, a backtracking algorithm is performed at $x_{k-1}$, see Steps~\ref{a:RNG:st12} to~\ref{a:RNG:st13}.

Some theoretical analyses in global and local superlinear convergence rely on a sufficiently small~$t$. However, using a small value of $t$ prevents fast convergence of the algorithm. Therefore, the parameter~$t$ is updated dynamically by Steps~\ref{a:RNG:st17} to~\ref{a:RNG:st21} without harming the theoretical results. Note that $\{t_k\}$ satisfies $t_{\min} \leq t_k \leq t_{\max}$ for all $k$.

The truncated conjugate gradient method (tCG) in Algorithm~\ref{alg:tCG} is used to approximately solve Problem~\eqref{eq:09} in Step~\ref{a:RNG:st02} of Algorithm~\ref{alg:IRPN}, or equivalently, is viewed as a method for approximately solving $(P_{x_k} \circ \mathcal{B}_{x_k} \circ P_{x_k}) w = P_{x_k} (\ell_{x_k,t_k})$.
Note that $\mathcal{B}_{x_k}$ is symmetric by the property of Hessian and Weingarten map and is further positive definite over the subspace $\mathfrak{L}_{x_*} = \{w:\bar{B}_{x_*}^T w = 0\}$ under certain assumptions as shown in~\cite[Proposition~3.13]{SAHJV2024}.

The early termination condition in Step~\ref{a:tCG:st04} of Algorithm~\ref{alg:tCG} is standard and has been used in~\cite{DS83NewtontCG}. 
The early stopping conditions in Steps~\ref{a:tCG:st06},~\ref{a:tCG:st02} and~\ref{a:tCG:st03} of Algorithm~\ref{alg:tCG} are new. They are used to guarantee the descent of the search direction $d(x_k)$. Moreover, it is shown in Section~\ref{sec03} that if $x_k$ is sufficiently close to a minimizer $x_*$, then these early termination criteria would not take effect and tCG stops due to the accurate condition in Step~\ref{a:tCG:st01}. This is crucial for the local superlinear convergence.

\begin{remark} \label{re01}
When the manifold $\mathcal{M}$ is the Euclidean space $\mathbb{R}^n$, Algorithm~\ref{alg:IRPN} does not become any existing Euclidean algorithm as far as we know. Specifically, in this case, the $v(x_k)$ is the Euclidean proximal gradient direction. It follows that $\overline{\nabla f(x_k)} = - \mu \mathrm{sign}( \bar{x}_k + \bar{v}(x_k)) - \frac{1}{t} \bar{v}(x_k)$. Using this equation for~\eqref{eq:09} yields that $w(x_k)$ can be viewed as an approximation of
\begin{align}
\argmin_{w \in \mathbb{R}^{j_k}} & \inner[]{\nabla f(x_k)}{ \begin{pmatrix}
    \bar{v}(x_k) + w \\
    \hat{v}(x_k)
\end{pmatrix} }
+ \frac{1}{2} \inner[]{\begin{pmatrix}
    \bar{v}(x_k) + w \\
    \hat{v}(x_k)
\end{pmatrix}}{
\nabla^2 f(x_k)
\begin{pmatrix}
    \bar{v}(x_k) + w \\
    \hat{v}(x_k)
\end{pmatrix}} \nonumber \\
& \qquad\qquad \qquad + \mu \mathrm{sign}
\left( x_k + \begin{pmatrix}
    \bar{v}(x_k) \\
    \hat{v}(x_k)
\end{pmatrix} \right)^T \left( x_k + \begin{pmatrix}
    \bar{v}(x_k) + w \\
    \hat{v}(x_k)
\end{pmatrix}
\right) \label{eq:71}
\end{align}
In other words, one can view $\bar{v}(x_k) + w$ as an approximation of the Newton direction at iterate $x_k$ with underlying sparsity structure fixed. %Such a method does not exist in the Euclidean setting. 
The closest approach given in~\cite[Algorithm~1]{BIM2023} first uses a Euclidean proximal mapping to obtain an iterate $y_k:=x_k + v(x_k)$. The underlying manifold structure, such as sparsity, is identified by $y_k$. A Newton-CG step with the manifold structure fixed is then used at $y_k$, not $x_k$, which is different from Algorithm~\ref{alg:IRPN}.
\end{remark}

\begin{algorithm}
\caption{A truncated conjugate gradient algorithm (tCG)}
\label{alg:tCG}
\begin{algorithmic}[1] 
\Require $[\ell_{x,t}, \mathcal{B}, \mathfrak{B}, t, \vartheta, \gamma, \tau, \theta, \kappa, v(x), P_x, G_x]$, where $g \in \mathbb{R}^j$, $\mathcal{B} \in \mathbb{R}^{j \times j}$, $\mathfrak{B} \in \mathbb{R}^{n \times n}$, $\vartheta > 0$, $\gamma > 0$, $\tau > 0$, $\theta > 0$, and $\kappa \in (0, 1)$; Note that $\mathfrak{B} = \begin{pmatrix} \mathcal{B} & \mathfrak{B}^{(12)} \\ \mathfrak{B}^{(21)} & \mathfrak{B}^{(22)} \end{pmatrix}$;
\Ensure $(w(x), \mathrm{status})$; % approx \mathcal{B} p_* = -g
\If { $G_x(v(x)) > G_x(0)$ } \label{a:tCG:st06}
\State return $w(x) = 0$ and $\mathrm{status} = 'early1'$;
\EndIf
\State $z = \mathfrak{B} v(x)$; %\Comment{$\tilde{z}$ is computed when computing $z$;} % and $\tilde{z} = \mathcal{B} \bar{v}(x)$
\If { $\inner[]{v(x)}{z} + \tau \|\hat{v}(x)\|_{\F}^2 < \gamma \|v(x)\|_{\F}^2$ } \label{a:tCG:st02}
\State return $w(x) = 0$ and $\mathrm{status} = 'early2'$;
\EndIf \label{a:tCG:st07}
\State $w_0 = 0$, $r_0 = P_x (\ell_{x,t})$, $o_0 = -r_0$, $\delta_0 = \inner[]{r_0}{r_0}$, $t_{\min} = z$; %, $t_{\min} = \tilde{z}$;
\For {$i = 0, 1, \ldots$}
\State $p_i = \mathcal{B} o_i$ and $q_i = P_x (p_i)$;
\If { $\inner[]{o_i}{q_i} \leq \vartheta \delta_i$} \label{a:tCG:st04}
\State return $w(x) = w_i$ and $\mathrm{status} = 'neg'$;\label{a:tCG:st05}
\EndIf
\State $\alpha_i = \frac{\inner[]{r_i}{r_i}}{ \inner[]{o_i}{ q_i} }$, $w_{i+1} = w_i + \alpha_i o_i$, $r_{i+1} = r_i + \alpha_i q_i$;
\State $d_{i+1} = \begin{pmatrix} \bar{v}(x) + w_{i+1} \\ \hat{v}(x) \end{pmatrix}$, $t_{i+1} = t_i + \alpha_i \begin{pmatrix} p_i \\ \mathfrak{B}_{21} o_i \end{pmatrix}$; \Comment{Note that $t_{i+1} = \mathfrak{B} d_{i+1}$} %$t_{i+1} = t_i + \alpha_i p_i$ and 
\If { $ \inner[]{d_{i+1}}{ t_{i+1}}  + \tau \|\hat{v}(x)\|_{\F}^2 < \gamma \|d_{i+1}\|_{\F}^2 $ or 
$G_{x}\left( d_{i+1} \right) > G_{x}(0)$ }  \label{a:tCG:st03} %or $ \|\hat{v}(x)\|_{\F} > \tau \|o_{i+1}\|_{\F} $ 
\State return $w(x) = w_i$ and $\mathrm{status} = 'early3'$;
\EndIf \label{a:tCG:st08}
\State $\beta_{i+1} = \frac{ \inner[]{r_{i+1}}{r_{i+1}} }{ \inner[]{r_i}{r_i} }$, $o_{i+1} = -r_{i+1} + \beta_{i+1} o_i $;
\State $\delta_{i+1} = \inner[]{r_{i+1}}{r_{i+1}} + \beta_{i+1}^2 \delta_i$; \Comment{ $\delta_{i+1} = \inner[]{o_{i+1}}{o_{i+1}}$ }
\State $i = i+1$;
\If { $\|r_{i}\|_{\F} \leq \|r_0\|_{\F} \min( \|r_0\|_{\F}^{\theta}, \kappa )$ }  \label{a:tCG:st01}
\State return $w(x) = w_{i}$, and $\mathrm{status} = 'lin'$ if $\|r_0\|_{\F}^{\theta} > \kappa$ and $\mathrm{status} = 'sup'$ otherwise;
\EndIf
\EndFor
\end{algorithmic}
\end{algorithm}

\subsection{Global convergence analysis} \label{sec02}

The global convergence is established under Assumptions~\ref{as01}. Assumption~\ref{as01} is standard and has been used in~\cite{CMSZ2019,HuaWei2019, HuaWei2019b,SAHJV2024}.
\begin{assumption} \label{as01}
The function $f$ is twice continuously differentiable and its gradient $\nabla f$ is Lipschitz continuous with Lipschitz constant $L_f$.
\end{assumption}

Lemma~\ref{le03} shows that $d(x_k)$ with the tCG in Algorithm~\ref{alg:tCG} is a descent direction. This result is analogous to the existing result in~\cite[Lemma~5.2]{CMSZ2019}. However, the proof is different in the sense that it relies on the termination conditions in Steps~\ref{a:tCG:st06},~\ref{a:tCG:st02} and~\ref{a:tCG:st03} of Algorithm~\ref{alg:tCG}. Specifically, the search direction $d(x_k) = \begin{pmatrix} \bar{v}(x_k) + w(x_k) \\ \hat{v}(x_k) \end{pmatrix}$ with the output $w(x_k)$ of Algorithm~\ref{alg:tCG} satisfies
either $d(x_k) = v(x_k)$ by Steps~\ref{a:tCG:st06},~\ref{a:tCG:st02} or
\begin{align}
\inner[]{d(x_k)}{\mathfrak{B}_{x_k} d(x_k)} + \tau \| \hat{v}(x_k)\|_{\F}^2 \geq& \gamma \|d(x_k) \|_{\F}^2 %\label{eq:16} \\ \|\hat{v}(x_k)\|_{\F} \leq& \tau \|\bar{d}(x_k)\|_{\F}, 
\hbox{ and } \label{eq:17} \\
G_{x_k}(d(x_k)) \leq& G_{x_k}(0) \label{eq:18}
\end{align}
by Step~\ref{a:tCG:st03}. If $d(x_k) = v(x_k)$ holds, then the theoretical results in~\cite{CMSZ2019} can be used and the descent property of $d(x_k)$ follows. Otherwise, inequality~\eqref{eq:17} ensures that $\mathfrak{B}_{x_k} + \tau \mathrm{diag}(0_{j \times j}, I_{n-j})$ shows positive definiteness along direction $d(x_k)$. Inequality~\eqref{eq:18} plays the same role as~\cite[the first inequality on Page 224]{CMSZ2019}. These two facts are sufficient to show the descent property of $d(x_k)$, as shown in Lemma~\ref{le03}. Therefore, the line search in Steps~\ref{a:RNG:st15} and~\ref{a:RNG:st16} terminates in finite steps. 

\begin{lemma} \label{le03}
Suppose Assumption~\ref{as01} holds. %If $\tau$ is sufficiently small, 
Then there exist two constants $\bar{\alpha} > 0$ and $\rho_1 > 0$, both independent of $k$, such that the sufficient descent condition
\begin{equation} \label{eq:25}
F(R_{x_k}( \alpha d(x_k) )) \leq F(x_k) - \alpha \rho_1 \|d(x_k)\|_{\F}^2
\end{equation}
holds for all $\alpha \in (0, \bar{\alpha})$. Therefore, the accepted step size $\alpha_k$ satisfies $\alpha_k \geq \bar{\alpha} \rho_2$ for all~$k$.
\end{lemma}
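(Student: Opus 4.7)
The plan is to split the analysis according to the outcome of Algorithm~\ref{alg:tCG}. If tCG returns with $w(x_k) = 0$, then $d(x_k) = v(x_k)$ coincides with the ManPG direction, so the sufficient descent \eqref{eq:25} follows from~\cite[Lemma~5.2]{CMSZ2019} with constants depending only on $L_f$, $\mu$, $C_{R_1}$, $C_{R_2}$, and $t_{\min}$, hence independent of $k$. This covers the branches \emph{early1} and \emph{early2}, and the \emph{neg} branch when it triggers at $i=0$.

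If tCG returns with a nonzero $w(x_k)$, I would first argue that the returned pair $(w(x_k), d(x_k))$ must satisfy both \eqref{eq:17} and \eqref{eq:18}. Indeed, tracing the indices of Algorithm~\ref{alg:tCG}, any $w_i$ with $i \geq 1$ that is returned through Step~\ref{a:tCG:st05}, Step~\ref{a:tCG:st03} (of the next iteration), or the accurate termination in Step~\ref{a:tCG:st01}, has already passed the guard of Step~\ref{a:tCG:st03} at the inner iteration that produced it. Using $G_{x_k}(0) = F(x_k)$ and combining \eqref{eq:17} with \eqref{eq:18} yields the central inequality
\begin{equation} \label{eq:starle03}
\inner[]{\nabla f(x_k)}{d(x_k)} + h(x_k + d(x_k)) - h(x_k) \leq -\tfrac{\gamma}{2}\|d(x_k)\|_{\F}^2,
\end{equation}
which plays the role of the first-order optimality bound used in the ManPG analysis.

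Next I would bound $F(R_{x_k}(\alpha d(x_k))) - F(x_k)$ for $\alpha \in (0,1]$ by combining three ingredients. For the smooth part, a second-order Taylor expansion together with the Lipschitz continuity of $\nabla f$ (Assumption~\ref{as01}) and the retraction estimates \eqref{eq:26}--\eqref{eq:27} gives
\[
f(R_{x_k}(\alpha d(x_k))) \leq f(x_k) + \alpha \inner[]{\nabla f(x_k)}{d(x_k)} + C_1 \alpha^2 \|d(x_k)\|_{\F}^2,
\]
with $C_1$ depending only on $L_f$, $C_{R_1}$, $C_{R_2}$, and $\max_{x\in\mathcal{M}}\|\nabla f(x)\|_{\F}$, all finite by compactness of $\mathcal{M}$. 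For the nonsmooth part, Lipschitz continuity of $h(y) = \mu\|y\|_1$ in the Frobenius norm with constant $\mu\sqrt{n}$, combined with \eqref{eq:27}, yields
\[
h(R_{x_k}(\alpha d(x_k))) \leq h(x_k + \alpha d(x_k)) + \mu\sqrt{n}\,C_{R_2}\,\alpha^2 \|d(x_k)\|_{\F}^2,
\]
and convexity of $h$ gives $h(x_k + \alpha d(x_k)) \leq \alpha h(x_k + d(x_k)) + (1-\alpha) h(x_k)$. Adding these bounds and invoking \eqref{eq:starle03} produces
\[
F(R_{x_k}(\alpha d(x_k))) \leq F(x_k) - \alpha\bigl(\tfrac{\gamma}{2} - C\alpha\bigr)\|d(x_k)\|_{\F}^2
\]
with $C := C_1 + \mu\sqrt{n}\,C_{R_2}$. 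Choosing any $\rho_1 \in (0, \gamma/2)$ small enough to also accommodate the Case~1 constant, and setting $\bar\alpha = (\gamma/2 - \rho_1)/C$, yields \eqref{eq:25}; the bound $\alpha_k \geq \bar\alpha\rho_2$ then follows from the backtracking rule in Steps~\ref{a:RNG:st15} and~\ref{a:RNG:st16}.

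The main obstacle is ensuring that all the constants ($C_1$, the Lipschitz constant of $h$, the retraction bounds) can be taken uniformly in $k$; this is exactly where the compactness of $\mathcal{M}$ and the uniform constants in \eqref{eq:26}--\eqref{eq:27} enter. A secondary technical point is the bookkeeping in Algorithm~\ref{alg:tCG} confirming that every nonzero return satisfies both \eqref{eq:17} and \eqref{eq:18} simultaneously; the placement of Step~\ref{a:tCG:st03} before the curvature and accurate checks is precisely what makes this work.
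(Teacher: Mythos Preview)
Your proposal is correct and follows essentially the same route as the paper: the same case split on the tCG output, the same appeal to \cite[Lemma~5.2]{CMSZ2019} when $d(x_k)=v(x_k)$, and in the remaining case the same derivation of the key inequality~\eqref{eq:starle03} from \eqref{eq:17}--\eqref{eq:18}, followed by the standard combination of the Lipschitz bound on $\nabla f$, the retraction estimates \eqref{eq:26}--\eqref{eq:27}, Lipschitz continuity and convexity of $h$. Your bookkeeping for which tCG returns satisfy \eqref{eq:17}--\eqref{eq:18} is in fact more explicit than the paper's; one small slip is that in Case~1 the uniform constants depend on $t_{\max}$ (the descent coefficient $\tfrac{1}{2t_k}$ is bounded below by $\tfrac{1}{2t_{\max}}$), not $t_{\min}$.
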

\begin{proof}
If the condition in Step~\ref{a:tCG:st06} or Step~\ref{a:tCG:st02} holds, then $d(x_k) = v(x_k)$. It follows from~\cite[Lemma~5.2]{CMSZ2019} that there exists $\bar{\alpha} > 0$ such that for any $\alpha \in (0, \bar{\alpha})$, it holds that
\begin{equation} \label{eq:15}
F(R_{x_k}( \alpha d(x_k) )) \leq F(x_k) - \frac{\alpha}{2 t_k} \|d(x_k)\|_{\F}^2 \leq F(x_k) - \frac{\alpha}{2 t_{\max}} \|d(x_k)\|_{\F}^2.
\end{equation}
Otherwise, Algorithm~\ref{alg:tCG} terminates when the conditions in Step~\ref{a:tCG:st03} fail, i.e., %~\eqref{eq:16}, 
\eqref{eq:17} and~\eqref{eq:18} hold.
From~\eqref{eq:18}, we have
\begin{equation} \label{eq:19}
h(x_k) \geq \inner[]{ \nabla f(x_k) }{ d(x_k)} + \frac{1}{2} \inner[]{d(x_k)}{ \mathfrak{B}_{x_k} d(x_k)} + \frac{\tau}{2} \|\hat{d}(x_k)\|_{\F}^2 + h(x_k + d(x_k)).
\end{equation}
By the convexity of $h$, it holds that for any $\alpha \in [0, 1]$,
\begin{equation} \label{eq:20}
h(x_k + \alpha d(x_k)) - h(x_k) \leq \alpha ( h(x_k + d(x_k)) - h(x_k) ).	
\end{equation}
Therefore, we have that for $\alpha \in [0, 1]$
\begin{align}
&\alpha \inner[]{\nabla f(x_k)}{ d(x_k)} + h(x_k + \alpha d(x_k)) - h(x_k) \nonumber \\
\leq& \alpha \left( \inner[]{\nabla f(x_k)}{ d(x_k)} + h(x_k + d(x_k)) - h(x_k) \right) \nonumber \\
\leq& - \frac{\alpha}{2} \left( \inner[]{d(x_k)}{ \mathfrak{B}_{x_k} d(x_k)} + \tau \|\hat{d}(x_k)\|_{\F}^2 \right)
\leq - \frac{\alpha}{2} \gamma \|d(x_k)\|_{\F}^2, \label{eq:21}
\end{align}
where the first, second, and last inequalities respectively follow from~\eqref{eq:20},~\eqref{eq:19}, and~\eqref{eq:17}.
By the Lipschitz continuity of $\nabla f$, we have
\begin{align}
&f(R_{x_k}(\alpha d(x_k))) - f(x_k) \nonumber \\
\leq& \inner[]{\nabla f(x_k)}{  R_{x_k}(\alpha d(x_k)) - x_k} + \frac{L_f}{2} \| R_{x_k}(\alpha d(x_k)) - x_k \|_{\F}^2 \nonumber\\
=& \inner[]{\nabla f(x_k)}{ ( R_{x_k}(\alpha d(x_k)) - x_k^+ + x_k^+ - x_k)} + \frac{L_f}{2} \| R_{x_k}(\alpha d(x_k)) - x_k \|_{\F}^2 \nonumber \\
\leq& C_{R_2} \|\nabla f(x_k)\|_{\F} \|\alpha d(x_k)\|_{\F}^2 + \alpha \inner[]{\nabla f(x_k)}{ d(x_k)} + \frac{L_f C_{R_1}^2}{2} \|\alpha d(x_k)\|_{\F}^2, \nonumber \\
\leq& \alpha \inner[]{\nabla f(x_k)}{ d(x_k)} + (C_{R_2} U_g + L_f C_{R_1}^2/2) \alpha^2 \|d(x_k)\|_{\F}^2 \label{eq:28}
\end{align}
where $x_k^+ = x_k + \alpha d(x_k)$, the second inequality follows from~\eqref{eq:26} and~\eqref{eq:27}, and $U_g = \max_{x \in \mathcal{M}} \|\nabla f(x)\|_{\F}$. It follows that
\begin{align*}
&F(R_{x_k}(\alpha d(x_k))) - F(x_k) \leq \alpha \inner[]{\nabla f(x_k)}{ d(x_k)} + (C_{R_2} U_g + L_f C_{R_1}^2/2) \alpha^2 \|d(x_k)\|_{\F}^2 \\
&\qquad \qquad + h(R_{x_k}( \alpha d(x_k) )) - h(x_k^+) + h(x_k^+) - h(x_k) \\
\leq& \alpha \inner[]{\nabla f(x_k)}{ d(x_k)} + (C_{R_2} U_g + L_f C_{R_1}^2/2) \alpha^2 \|d(x_k)\|_{\F}^2 \\
&\qquad \qquad + L_h \| R_{x_k}( \alpha d(x_k) ) - x_k^+ \|_{\F} + h(x_k^+) - h(x_k) \\
\leq& \alpha \inner[]{\nabla f(x_k)}{ d(x_k)} + (C_{R_2} U_g + L_f C_{R_1}^2/2 + L_h C_{R_2}) \alpha^2 \|d(x_k)\|_{\F}^2 + h(x_k^+) - h(x_k) \\
\leq& \left( C_{R_2} U_g + \frac{L_f C_{R_1}^2}{2} + L_h C_{R_2} - \frac{\gamma}{2 \alpha} \right) \alpha^2 \|d(x_k)\|_{\F}^2,
\end{align*}
where the first inequality follows from~\eqref{eq:28}, the second inequality follows from the Lipschitz continuity of $h$ with Lipschitz constant $L_h$, the third inequality follows from~\eqref{eq:27}, and the fourth equation follows from~\eqref{eq:21}. Let $\bar{\alpha} = \gamma / (4 (C_{R_2} U_g + L_f C_{R_1}^2/2 + L_h C_{R_2}) )$. We have that for any $\alpha \in (0, \bar{\alpha})$, it holds that
\begin{equation} \label{eq:30}
F(R_{x_k}( \alpha d(x_k) )) \leq F(x_k) - \alpha \frac{\gamma}{4} \|d(x_k)\|_{\F}^2.
\end{equation}
The final result~\eqref{eq:25} follows from~\eqref{eq:15} and~\eqref{eq:30} with $\rho_1 = \min\left(\frac{1}{2 t_{\max}}, \frac{\gamma}{4}\right)$. It follows from the line search in Steps~\ref{a:RNG:st15} to~\ref{a:RNG:st05} that $\alpha_k \geq \bar{\alpha} \rho_2$ for all $k$.
\end{proof}

Lemma~\ref{le11} is used for the global convergence analysis in Theorem~\ref{th03} and implies that if $t_k$ is sufficiently small, then $F(R_{x_k}( \alpha d(x_k) )) \leq F(x_k) - \frac{1}{(4 + 1/t_k)^2} \alpha \rho_1 \|v(x_k)\|_{\F}^2$ holds.
\begin{lemma} \label{le11}
Suppose Assumption~\ref{as01} holds. If $t$ is sufficiently small, then it holds that $(4 + 1/t) \|d(x)\|_{\F} \geq \|v(x)\|_{\F}$ for any $x \in \mathcal{M}$.
% $d(x) = 0$ if and only if $v(x) = 0$.
\end{lemma}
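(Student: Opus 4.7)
The plan is to first dispose of the trivial regime: if Algorithm~\ref{alg:tCG} exits with $w(x)=0$ (which only happens through Steps~\ref{a:tCG:st06} or~\ref{a:tCG:st02}), then $d(x)=v(x)$ and the inequality is immediate since $4+1/t>1$. In the remaining regime the descent conditions~\eqref{eq:17} and~\eqref{eq:18} hold for $d(x)$, and I proceed by extracting quantitative bounds from them together with the KKT optimality of $v(x)$ for the proximal subproblem~\eqref{eq:subforv}.

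Testing the KKT inclusion $-\nabla f(x)-v(x)/t-B_x\lambda\in\partial h(x+v(x))$ against $v(x)$ (and using $\inner[]{B_x\lambda}{v(x)}=0$ because $v(x)\in\T_x\mathcal{M}$) yields $h(x)-h(x+v(x))-\inner[]{\nabla f(x)}{v(x)}\ge\|v(x)\|_{\F}^2/t$; from this, Lipschitz continuity of $\nabla f$ and $h$ on the compact $\mathcal{M}$ gives $\|v(x)\|_{\F}\le tC$ with $C:=U_g+L_h$, while substitution into $G_x(v(x))$ yields $G_x(0)-G_x(v(x))\ge\|v(x)\|_{\F}^2/(2t)$ once $t\le 1/(M+\tau)$, where $M$ uniformly bounds $\|\mathfrak{B}_x\|_2$ on $\mathcal{M}$. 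On the $d$-side,~\eqref{eq:17} gives $\tfrac{1}{2}\inner[]{d(x)}{\mathfrak{B}_x d(x)}+\tfrac{\tau}{2}\|\hat d(x)\|_{\F}^2\ge\tfrac{\gamma}{2}\|d(x)\|_{\F}^2$, so combined with~\eqref{eq:18} and the Lipschitz upper bound on $\inner[]{\nabla f(x)}{d(x)}+h(x+d(x))-h(x)$, I obtain $G_x(0)-G_x(d(x))\le C\|d(x)\|_{\F}$. The truncated CG applied to the convex quadratic~\eqref{eq:09} is a monotone descent method, so $\Phi(w(x))\le\Phi(0)=0$, and a short computation identifies $G_x(d(x))-G_x(v(x))$ with $\Phi(w(x))$ whenever $x+v(x)$ and $x+d(x)$ share sign pattern on their common support; chaining these estimates produces the key relation $\|v(x)\|_{\F}^2\le 2tC\|d(x)\|_{\F}$.

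With $\|v(x)\|_{\F}^2\le 2tC\|d(x)\|_{\F}$ and $\|v(x)\|_{\F}\le tC$ in hand, the claim follows by a subcase analysis on $\|d(x)\|_{\F}$. If $\|d(x)\|_{\F}\ge tC/2$ then $\|v(x)\|_{\F}\le tC\le 2\|d(x)\|_{\F}\le(4+1/t)\|d(x)\|_{\F}$, since $4+1/t\ge 2$. Otherwise the target reduces to $(4+1/t)^2\|d(x)\|_{\F}\ge 2tC$, and the delicate sub-regime is when $\|d(x)\|_{\F}$ is itself of order $t^3$ or smaller. Closing this edge case is the main obstacle I anticipate: it most likely requires exploiting the first-iteration structure of the truncated CG, where the initial residual $\|P_x\ell_{x,t}\|_{\F}$ is of order $1/t$ whenever $P_x\bar v(x)\neq 0$, which together with the explicit step-size formula forces $\|d(x)\|_{\F}$ to remain bounded below away from that critical order whenever $t$ is small enough.
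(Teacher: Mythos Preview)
Your proposal has two genuine gaps.

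First, the chain $G_x(0)-G_x(v(x))\le G_x(0)-G_x(d(x))$ requires $G_x(d(x))\le G_x(v(x))$, which you try to extract from the monotonicity of truncated CG on the quadratic $\Phi(w)=\inner[]{\ell_{x,t}}{w}+\tfrac12\inner[]{w}{\mathcal{B}_x w}$. But $G_x(d(x))-G_x(v(x))$ equals $\Phi(w(x))$ only after substituting the KKT relation $\overline{\nabla f(x)}+\mu\,\mathrm{sign}(\bar x+\bar v)+\tfrac{1}{t}\bar v\in\mathrm{range}(\bar B_x)$ \emph{and} assuming $\mathrm{sign}(\bar x+\bar v+w)=\mathrm{sign}(\bar x+\bar v)$ so that the $\ell_1$ terms linearize. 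The lemma, however, is a statement for \emph{every} $x\in\mathcal{M}$, and there is no global guarantee of this sign coincidence; the paper itself only obtains $G_x(d_i)\le G_x(v(x))$ in a neighborhood of $x_*$ (Lemma~\ref{le15}), under Assumptions~\ref{as02}--\ref{as04}. Your key chain therefore breaks down away from $x_*$.

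Second, as you yourself concede, the sub-regime $\|d(x)\|_{\F}$ small is left open. The hinted fix via the first CG step is only a heuristic: a large initial residual $\|P_x\ell_{x,t}\|_{\F}$ does not yield a uniform lower bound on $\|d(x)\|_{\F}$, since the step $\alpha_0$ depends on the curvature $\inner[]{o_0}{\mathcal{B}_x o_0}$ and $w_1$ could nearly cancel $\bar v(x)$.

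The paper's route avoids both obstacles by never touching $G_x$ or the $\ell_1$ term. It uses only the CG descent inequality $\inner[]{w(x)}{\ell_{x,t}}\le 0$ (from \cite[Lemma~A.2]{DS83NewtontCG}), which gives $\inner[]{\bar d(x)}{-\ell_{x,t}}\ge\inner[]{\bar v(x)}{-\ell_{x,t}}$. The left side is bounded above by $(b_3+1/t)\|\bar d(x)\|_{\F}\|v(x)\|_{\F}$ using $\|\mathcal{B}_x\|_2,\|\mathfrak{B}_x^{(12)}\|_2\le b_1,b_2$; for $t<1/(2b_1)$ the right side is bounded below by $b_1\|v(x)\|_{\F}^2-(b_1+b_2)\|v(x)\|_{\F}\|\hat v(x)\|_{\F}$. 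Combining these and using $\hat d(x)=\hat v(x)$ yields $(b_4+1/t)\|d(x)\|_{\F}\ge b_1\|v(x)\|_{\F}$ directly, with no case split and no sign-pattern hypothesis; the constant $4$ drops out from the particular choice $b_1=b_2=\max\{1,\max_{x}\sigma_{\max}(\mathfrak{B}_x)\}$.
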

\begin{proof}
Algorithm~\ref{alg:tCG} is the algorithm in~\cite[Minor Iteration]{DS83NewtontCG} with additional termination conditions. Therefore, the theoretical results about the truncated conjugate gradient algorithm in~\cite{DS83NewtontCG} still hold. By~\cite[Lemma~A.2]{DS83NewtontCG}, we have
\begin{equation} \label{eq:40}
	\inner[]{w(x)}{\ell_{x,t}} \leq - b_0 \|P_x (\ell_{x,t})\|_{\F}^2,
\end{equation}
where $b_0 = \min(1, 1 / \| \mathcal{B} \|_2)$. It follows from~\eqref{eq:40} that
\begin{align} \label{eq:69}
\inner[]{\bar{d}(x)}{-\ell_{x,t}} = \inner[]{\bar{v}(x) + w(x)}{-\ell_{x,t}} \geq \inner[]{\bar{v}(x)}{-\ell_{x,t}} + b_0 \|P_x (\ell_{x,t})\|_{\F}^2 \geq \inner[]{\bar{v}(x)}{-\ell_{x,t}}.
\end{align}
The left term of~\eqref{eq:69} satisfies
\begin{align*}
\inner[]{\bar{d}(x)}{-\ell_{x,t}} =& \inner[]{\bar{d}(x)}{ \frac{1}{t} ( I_{j} - t \mathcal{B}_{x} ) \bar{v}(x) - \mathfrak{B}_{x}^{(12)} \hat{v}(x) } \\
\leq& \|\bar{d}(x)\|_{\F} \left( \frac{1 + t b_1}{t}  \| \bar{v}(x) \|_{\F} + b_2 \|\hat{v}(x)\|_{\F}\right)
\leq (b_3 + \frac{1}{t}) \|\bar{d}(x)\|_{\F} \|v(x)\|_{\F},
\end{align*}
where $b_1 \geq \max_{x \in \mathcal{M}} \sigma_{\max} (\mathcal{B}_x)$, $b_2 \geq \max_{x \in \mathcal{M}} \sigma_{\max} (\mathfrak{B}_x^{(12)})$, and $b_3 = b_1 + b_2$. The right term of~\eqref{eq:69} satisfies
\begin{align*}
&\inner[]{\bar{v}(x)}{-\ell_{x,t}} = \inner[]{ \bar{v}(x) }{ \frac{1}{t} ( I_{j} - t \mathcal{B}_{x} ) \bar{v}(x) - \mathfrak{B}_{x}^{(12)} \hat{v}(x) } \\
\geq& b_1 \|\bar{v}(x)\|_{\F}^2 - b_2 \|\bar{v}(x)\|_{\F} \|\hat{v}(x)\|_{\F} = b_1 \|v(x)\|_{\F}^2 - (b_1 \|\hat{v}(x)\|_{\F} + b_2 \|\bar{v}(x)\|_{\F}) \|\hat{v}(x)\|_{\F} \\
\geq& b_1 \|v(x)\|_{\F}^2 - (b_1 + b_2) \|v(x)\|_{\F} \|\hat{v}(x)\|_{\F},
\end{align*}
when $t < 1 / (2 b_1)$. It follows that
$(b_3 + 1/t) \|\bar{d}(x)\|_{\F} \|v(x)\|_{\F} \geq b_1 \|v(x)\|_{\F}^2 - (b_1 + b_2) \|v(x)\|_{\F} \|\hat{v}(x)\|_{\F}$, which yields $(b_1 + b_2) \|\hat{v}(x)\|_{\F} + (b_3 + 1/t) \| \bar{d}(x) \|_{\F} \geq b_1 \|v(x)\|_{\F}$.
Therefore, we have $(b_4 + 1/t) \|d(x)\|_{\F} \geq b_1 \|v(x)\|_{\F}$, where $b_4 = b_1 + b_2 + b_3$. Choosing $b_1 = b_2 = \max\{1,\max_{x \in \mathcal{M}} \sigma_{\max} (\mathfrak{B}_x)\}$ yields $b_3 = 2 b_1$ and $b_4 = 4 b_1$. It follows that $(4 + 1/t) \|d(x)\|_{\F} \geq \|v(x)\|_{\F}$, which completes the proof.
\end{proof}

The global convergence is therefore established from Lemma~\ref{le03} and stated in Theorem~\ref{th03}.
\begin{theorem} \label{th03}
Suppose Assumption~\ref{as01} holds and $t_{\min}$ is sufficiently small such that the result of Lemma~\ref{le11} holds. Then it holds that 
\[
\liminf_{k \rightarrow \infty} \|v(x_k)\|_{\F} = 0.
\]
\end{theorem}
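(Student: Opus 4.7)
The plan is to argue by contradiction. Suppose $\liminf_{k\to\infty}\|v(x_k)\|_{\F} > 0$, so there exists $\varepsilon > 0$ with $\|v(x_k)\|_{\F} \geq \varepsilon$ for all $k$ large enough; I would then derive that $F$ decreases by an infinite total amount along $\{x_k\}$, contradicting the fact that $F$ is continuous on the compact manifold $\mathcal{M}$ and hence bounded below.

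First I would dissect Algorithm~\ref{alg:IRPN} into two kinds of iterations: \emph{type-A} iterations, in which the backtracking line search in Steps~\ref{a:RNG:st15}--\ref{a:RNG:st05} is executed (either directly, or inside the redo block Steps~\ref{a:RNG:st12}--\ref{a:RNG:st13}), and \emph{unit-step} iterations, in which the step size is $1$. Lemma~\ref{le03} guarantees $F(x_{k+1}) \leq F(x_k) - \bar{\alpha}\rho_2\rho_1 \|d(x_k)\|_{\F}^2$ on every type-A iteration. Unit-step iterations always occur in blocks of two through the flag transition $0\!\to\!1\!\to\!2$: the second one either passes the check in Step~\ref{a:RNG:st11}, producing the \emph{type-C} descent $F(x_{k+1}) \leq F(x_{k-1}) - \rho_1 \|v(x_{k-1})\|_{\F}^2$, or it fails, in which case Steps~\ref{a:RNG:st12}--\ref{a:RNG:st13} convert the block into a single type-A descent rooted at $x_{k-1}$ and reset the flag to $0$. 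Either way, every iteration index is covered by at least one certified descent.

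Telescoping these certified descents against $\inf_{\mathcal{M}} F > -\infty$ yields
\[
\sum_{k \text{ type A}} \|d(x_k)\|_{\F}^2 + \sum_{k \text{ type C}} \|v(x_{k-1})\|_{\F}^2 < \infty.
\]
Each type-C summand is $\geq \varepsilon^2$ by the contradiction hypothesis, so type C occurs only finitely often; thus for all $k$ sufficiently large only type-A descents occur and $\|d(x_k)\|_{\F} \to 0$ along the type-A subsequence. Since $t_k \geq t_{\min}$, this forces $(4+1/t_k)\|d(x_k)\|_{\F} \leq (4+1/t_{\min})\|d(x_k)\|_{\F} \to 0 < \varepsilon \leq \|v(x_k)\|_{\F}$, so for all large type-A $k$ the first disjunct of the trigger test in Step~\ref{a:RNG:st17} is active and $t_{k+1} = \max(\varpi_2 t_k, t_{\min})$. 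This drives $t_k$ to $t_{\min}$ along the type-A subsequence; at such a $k$, Lemma~\ref{le11} delivers $(4+1/t_{\min})\|d(x_k)\|_{\F} \geq \|v(x_k)\|_{\F}$, directly contradicting the active first disjunct. The only remaining way the trigger can fire is $\mathrm{status} = 'early1'$, in which case Algorithm~\ref{alg:tCG} returns $w(x_k) = 0$ so $d(x_k) = v(x_k)$, and $\|d(x_k)\|_{\F} = \|v(x_k)\|_{\F} \geq \varepsilon$, again contradicting $\|d(x_k)\|_{\F} \to 0$.

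The main obstacle I expect is the combinatorial bookkeeping around the flag mechanism and the dynamic update of $t_k$. One has to verify that the redo block really converts a failed unit-step pair into a single clean type-A descent without double-counting $F$ differences across the overwritten $x_k$, and that although $t_k$ may transiently grow (via Step~\ref{a:RNG:st20}) inside unit-step blocks, this growth is dominated by the triggered shrinkage along the cofinal type-A subsequence so that $t_k$ indeed reaches $t_{\min}$ on some type-A iteration where Lemma~\ref{le11} applies.
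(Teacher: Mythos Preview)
Your proposal is correct and follows essentially the same route as the paper's proof: both hinge on Lemma~\ref{le03} for the per-step descent, on the boundedness of $F$ for a finite telescoping sum, and on the $t$-shrinkage mechanism together with Lemma~\ref{le11} to force a contradiction. The paper organizes this as a direct subsequence argument (show that $(4+1/t_k)\|d(x_k)\|_{\F}\geq\|v(x_k)\|_{\F}$ holds infinitely often, then read off $\|v(x_{k_j})\|_{\F}\to 0$ from the finite sum), whereas you run the same machinery inside a global contradiction from $\|v(x_k)\|_{\F}\geq\varepsilon$; the logical content is the same.

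Your explicit type-A/type-C decomposition of the flag mechanism is actually more careful than the paper's treatment: the paper writes the telescoping bound as if $F(x_k)-F(x_{k+1})\geq\min(\rho_1\|v(x_k)\|_{\F}^2,\bar\alpha\rho_2\rho_1\|d(x_k)\|_{\F}^2)$ held term-by-term, which is not literally true on the first half of a unit-step pair; your pairing argument fixes this. On the other hand, the paper's version of the $t$-dynamics step is cleaner: it assumes for contradiction that $(4+1/t_k)\|d(x_k)\|_{\F}<\|v(x_k)\|_{\F}$ holds for \emph{all} large $k$, so the first disjunct of Step~\ref{a:RNG:st17} is active at every loop iteration and $t_k$ monotonically shrinks to $t_{\min}$, with no transient growth to account for. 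Your route, deriving the first disjunct only along the type-A subsequence, leaves exactly the bookkeeping obstacle you flag at the end. If you want to close that gap cleanly, it is simpler to mimic the paper: once you know type-C is finite and $\|d(x_k)\|_{\F}\to 0$ along the cofinal type-A tail, argue directly that there cannot be an infinite subsequence with $(4+1/t_k)\|d(x_k)\|_{\F}\geq\|v(x_k)\|_{\F}\geq\varepsilon$ (since that would prevent $\|d(x_k)\|_{\F}\to 0$), hence eventually the first disjunct holds at \emph{every} index, and the shrinkage is monotone.
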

\begin{proof}
Since $F$ is continuous and $\mathcal{M}$ is compact, $F$ is bounded from below. Therefore, we have
\begin{align} \label{eq:70}
& \infty > \sum_{k = 0}^\infty (F(x_k) - F(x_{k+1})) \geq \sum_{k = 0}^{\infty} \min(\rho_1 \|v(x_k)\|_{\F}^2, \bar{\alpha} \rho_2 \rho_1 \|d(x_k)\|_{\F}^2 ),
\end{align}
where the last inequality follows from Lemma~\ref{le03}. Next, we claim that there exists an infinite subsequence $\{k_j\}$ such that $(4 + 1/t_k)\|d(x_k)\|_{\F} \geq \|v(x_k)\|_{\F}$. Otherwise, there exists $K>0$ such that $(4 + 1/t_k)\|d(x_k)\|_{\F} < \|v(x_k)\|_{\F}$ holds for all $k \geq K$. By Step~\ref{a:RNG:st17} of Algorithm~\ref{alg:IRPN}, $t_k$ is eventually equal to $t_{\min}$. Since $t_{\min}$ is sufficiently small such that $(4 + 1/t_{\min})\|d(x)\|_{\F} \geq \|v(x)\|_{\F}$ holds for any $x \in \mathcal{M}$, we have $(4 + 1 / t_k)\|d(x_k)\|_{\F} \geq \|v(x_k)\|_{\F}$ for $k$ sufficiently large, which conflicts with $(4 + 1/t_k)\|d(x_k)\|_{\F} < \|v(x_k)\|_{\F}, \forall k \geq K$.
It follows from~\eqref{eq:70} that $\infty > \sum_{j=1}^{\infty} \min(\rho_1, \bar{\alpha} \rho_2 \rho_1/(4 + 1/t_{\min})^2) \|v(x_{k_j})\|_{\F}^2$, which implies
$\liminf_{k \rightarrow \infty} \|v(x_k)\|_{\F} = 0$.
\end{proof}

\subsection{Local convergence analysis} \label{sec03}
 
This section shows that if an iterate $x_k$ is sufficiently close to a local minimizer $x_*$, then the sequence $\{x_k\}$ converges to $x_*$ and the rate is superlinear. 
The analysis is established under Assumptions~\ref{as02} and~\ref{as03}, which have been made in~\cite{SAHJV2024}. Assumption~\ref{as02} assumes that the support of $x + v$ does not change in a neighborhood of $x_*$
and has proven to be held when $\mathcal{M}$ is a Euclidean space under reasonable assumptions in~\cite{BIM2023,LFP2017}.
Assumption~\ref{as03} gives a condition that $x_*$ satisfies and it holds if the manifold is the unite sphere $\mathbb{S}^{n - 1} = \{x \in \mathbb{R}^n \mid x^T x = 1\}$ or the oblique manifold $\left(\mathbb{S}^{n - 1}\right)^{m}$. 

\begin{assumption}\label{as02}
There exists a neighborhood $\mathcal{U}_{x_*}$ of $x_* = [\bar{x}_*^T, 0^T]^T$ on $\mathcal{M}$ such that for any $x = [\bar{x}^T, \hat{x}^T]^T \in \mathcal{U}_{x_*}$, it holds that $\bar{x} + \bar{v} \neq 0$ and $\hat{x} + \hat{v} = 0$, where $v = [
\bar{v}^T, \hat{v}^T ]^T$ denotes the solution of the subproblem in Step~\ref{a:RNG:st01} of Algorithm~\ref{alg:IRPN} at $x$.
\end{assumption}

\begin{assumption}\label{as03}
 Let $B_{x_*}^{\T} = [\bar{B}_{x_*}^{\T}, \hat{B}_{x_*}^{\T}] $, where $\bar{B}_{x_*} \in \mathbb{R}^{j \times d}$ and $\hat{B}_{x_*} \in \mathbb{R}^{(n-j)\times d}$. It is assumed that $j \geq d$ and $\bar{B}_{x_*}$ is full column rank.
\end{assumption}

It is shown in~\cite[Proposition 3.13]{SAHJV2024} that under Assumption~\ref{as03}, the linear operator $H_{x_*}^{(11)} - L_{x_*}^{(11)}$ is positive semidefinite on $\{w \mid \bar{B}_{x_*}^T w = 0 \}$ if $x_*$ is a local minimizer of $F$. Assumption~\ref{as04} is slightly stronger than the necessary optimality condition. This assumption is analogous to that used in the superlinear convergence analysis of the Newton method for smooth problems. See details in~\cite[Theorem~2.3 and Theorem~3.5]{NocWri2006}. %Therefore, it is reasonable.
\begin{assumption} \label{as04}
The linear operator $\mathcal{B}_{x_*}$ is positive definite on the subspace $\mathfrak{L}_{x_*} = \{w \mid \bar{B}_{x_*}^T w = 0 \}$.
\end{assumption}

Assumptions~\ref{as02}, \ref{as03} and~\ref{as04} are used to show that the early termination conditions in Steps~\ref{a:tCG:st06},~\ref{a:tCG:st02}, \ref{a:tCG:st04}, and~\ref{a:tCG:st03} in Algorithm~\ref{alg:tCG} do not take effect when $x$ is sufficiently close to $x_*$. The superlinear convergence rate of the proposed method requires step size one to be accepted eventually. To achieve this, we assume the function $F$ to be geodesically strongly convex at $x_*$, as shown in Assumption~\ref{as06}. The definition of geodesically strongly convexity at a point on $\mathcal{M}$ is given in Definition~\ref{d01}.

\begin{definition}[Geodesically strongly convex] \label{d01}
A function $f: \mathcal{M} \rightarrow \mathbb{R}$ is called geodesically $\varsigma$-strongly convex at $x$ if there exists a neighborhood of $x$, denoted by $\tilde{\mathcal{U}}_x$, such that $\Exp$ is a diffeomorphism in $\tilde{\mathcal{U}}_x$ and for any $y \in \tilde{\mathcal{U}}_x$, it holds that
\[
f(y) \geq f(x) + \inner[]{g}{\eta_x} + \frac{\varsigma}{2} \|\eta_x\|_{\F}^2,
\]
where $\eta_x = \Exp_x^{-1}(y)$, $g \in \partial_C f(x)$, and $\partial_C f(x)$ denotes the Riemannian Clarke subgradient of $f$ at $x$ (see in~\cite[Definition~2.3]{GH2015a}).
\end{definition}
\begin{assumption} \label{as06}
The function $F$ is $\varsigma$-geodesically strongly convex at $x_*$. 
\end{assumption}

Since $x_*$ is a local minimizer, it holds that $0_x \in \partial_C F(x_*)$. Therefore, Assumption~\ref{as06} yields
\begin{equation} \label{eq:59}
F(y) \geq F(x_*) + \frac{\varsigma}{2} \|\Exp_{x_*}^{-1}(y)\|_{\F}^2,
\end{equation}
for any $y \in \tilde{\mathcal{U}}_{x_*}$. Moreover, Assumption~\ref{as06} implies Assumption~\ref{as04} as shown in Lemma~\ref{le06}.
\begin{lemma} \label{le06}
Suppose Assumption~\ref{as06} holds, that is, the function $F = f + h$ is $\varsigma$-geodesically strongly convex at $x_*$. Then the linear operator $\mathcal{B}_{x_*}$ is positive definite on $\mathfrak{L}_{x_*}$.
\end{lemma}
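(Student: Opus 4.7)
The plan is to restrict $F$ to a submanifold on which the $\ell_1$ nonsmoothness disappears, so that geodesic strong convexity of $F$ on $\mathcal{M}$ can be converted to a pointwise Hessian inequality. Using Assumption~\ref{as03}, the set $\mathcal{S} := \{x\in\mathcal{M}\mid \hat{x}=0\}$ is a smooth submanifold of $\mathcal{M}$ near $x_*$: the defining equations of $\mathcal{M}$ (with normal Jacobian given by $B_{x_*}$) together with the linear condition $\hat{x}=0$ produce a combined Jacobian whose row rank is maximal exactly when $\bar{B}_{x_*}$ has full column rank. The tangent space of $\mathcal{S}$ at $x_*$ is then precisely $\{[w^{\T},0^{\T}]^{\T}:w\in\mathfrak{L}_{x_*}\}$.

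For arbitrary $w\in\mathfrak{L}_{x_*}\setminus\{0\}$, I would set $\eta=[w^{\T},0^{\T}]^{\T}$ and select any smooth curve $\gamma:(-\delta,\delta)\to\mathcal{S}$ with $\gamma(0)=x_*$ and $\gamma'(0)=\eta$. Because $\hat{\gamma}(s)\equiv 0$ and $\sign(\bar{\gamma}(s))=\sign(\bar{x}_*)$ for $s$ close to $0$, the restriction $F(\gamma(s))=f(\gamma(s))+\mu\sign(\bar{x}_*)^{\T}\bar{\gamma}(s)$ is smooth near $s=0$, so we may Taylor expand as $F(\gamma(s))=F(x_*)+sC_1+\tfrac{s^2}{2}C_2+O(s^3)$. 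The KKT identity at $x_*$ furnishes a subgradient $g\in\partial h(x_*)$ with $\bar{g}=\mu\sign(\bar{x}_*)$ and $\nabla f(x_*)+g+B_{x_*}\lambda(x_*)=0$; combined with $\bar{B}_{x_*}^{\T}w=0$ this yields $C_1=\langle\overline{\nabla f(x_*)}+\mu\sign(\bar{x}_*),w\rangle=-\lambda(x_*)^{\T}\bar{B}_{x_*}^{\T}w=0$ and $C_2=w^{\T}H_{x_*}^{(11)}w-\langle B_{x_*}\lambda(x_*),\gamma''(0)\rangle$.

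The technical crux, which I expect to be the main obstacle, is to identify $\langle B_{x_*}\lambda(x_*),\gamma''(0)\rangle=w^{\T}L_{x_*}^{(11)}w$. I would decompose $\gamma''(0)$ into tangent and normal parts; since $B_{x_*}\lambda(x_*)\in\N_{x_*}\mathcal{M}$, its pairing with the tangent component vanishes, so only the normal component contributes. For any curve on $\mathcal{M}$ the normal component of $\gamma''(0)$ equals the second fundamental form $\mathrm{II}(\eta,\eta)$ (which depends only on $\eta$ and $\mathcal{M}$, not on the particular curve). Applying the Weingarten duality $\langle \mathrm{II}(\eta,\eta),N\rangle=\langle \mathcal{W}_{x_*}(\eta,N),\eta\rangle$ with $N=B_{x_*}\lambda(x_*)$, together with the definition $\mathcal{L}_{x_*}(\cdot)=\mathcal{W}_{x_*}(\cdot,B_{x_*}\lambda(x_*))$, delivers $\langle B_{x_*}\lambda(x_*),\gamma''(0)\rangle=\langle \mathcal{L}_{x_*}\eta,\eta\rangle=w^{\T}L_{x_*}^{(11)}w$, so that $C_2=w^{\T}\mathcal{B}_{x_*}w$.

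To conclude, I would apply~\eqref{eq:59}: since $\gamma(s)=x_*+s\eta+O(s^2)$, one has $\|\Exp_{x_*}^{-1}(\gamma(s))\|_{\F}^2=s^2\|w\|_{\F}^2+O(s^3)$, and strong convexity gives $F(\gamma(s))\geq F(x_*)+\tfrac{\varsigma}{2}s^2\|w\|_{\F}^2+O(s^3)$. Matching this with the smooth Taylor expansion $F(\gamma(s))=F(x_*)+\tfrac{s^2}{2}w^{\T}\mathcal{B}_{x_*}w+O(s^3)$, dividing by $s^2$, and letting $s\to 0$ yields $w^{\T}\mathcal{B}_{x_*}w\geq \varsigma\|w\|_{\F}^2>0$, establishing positive definiteness of $\mathcal{B}_{x_*}$ on $\mathfrak{L}_{x_*}$.
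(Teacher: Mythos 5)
Your proof is correct and follows essentially the same strategy as the paper: restrict $F$ to the submanifold $\mathcal{M}_{\mathrm{sub}}=\{x\in\mathcal{M}:\hat{x}=0\}$ (on which the $\ell_1$ term becomes smooth), take a curve through $x_*$ with tangent $[w^{\T},0^{\T}]^{\T}$, identify the second derivative of $F$ along the curve with $w^{\T}\mathcal{B}_{x_*}w$, and compare against the lower bound coming from $\varsigma$-geodesic strong convexity via $\|\Exp_{x_*}^{-1}(\gamma(s))\|_{\F}=s\|w\|_{\F}+O(s^2)$. The one substantive difference is that the paper delegates the identity $(F\circ\gamma)''(0)=w^{\T}\mathcal{B}_{x_*}w$ entirely to a citation of \cite[Proposition~3.13]{SAHJV2024}, whereas you re-derive it from scratch — showing the first-order term vanishes via the KKT system $\nabla f(x_*)+g+B_{x_*}\lambda(x_*)=0$ with $\bar{g}=\mu\,\mathrm{sign}(\bar{x}_*)$ and $\bar{B}_{x_*}^{\T}w=0$, then converting $\langle B_{x_*}\lambda(x_*),\gamma''(0)\rangle$ to $w^{\T}L_{x_*}^{(11)}w$ via the tangent/normal decomposition of $\gamma''(0)$, the curve-independence of the second fundamental form, and Weingarten duality. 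Your version is more self-contained; the paper's is shorter because it reuses the already-established computation from the reference, which is the more economical choice in context.
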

\begin{proof}
The proofs rely on the derivation of~\cite[Proposition~3.13]{SAHJV2024}. By Assumption~\ref{as03} and~\cite[Remark~3.12]{SAHJV2024}, we have that \cite[Lemma~3.11 and Proposition~3.13]{SAHJV2024} holds. 
Therefore, the set $\mathcal{M}_{\mathrm{sub}} = \{x \in \Omega_{x_*} \subset \mathcal{M}: \hat{x} = 0\}$ is an embedded submanifold, where $\Omega_{x_*}$ is a sufficiently small neighborhood of $x_*$.
Let $\gamma: (-\varepsilon, \varepsilon) \rightarrow \mathcal{M}_{\mathrm{sub}}$ be a smooth curve on $\mathcal{M}_{\mathrm{sub}}$ such that $\varepsilon > 0$, $\gamma(0) = x_*$, and $\|\gamma'(0)\|_{\F} = 1$. Therefore, $F \circ \gamma$ is smooth on $(- \varepsilon, \varepsilon)$. Using the same derivation as that in~\cite[Proposition~3.13]{SAHJV2024}, we have
\begin{equation} \label{eq:62}
\frac{\mathrm{d}^2}{ \mathrm{d} t^2} F (\gamma(t)) \vert_{t = 0} = u^T \mathcal{B}_{x_*} u,
\end{equation}
where $u$ satisfies $\gamma'(0) = \begin{pmatrix} u \\ 0_{n-j} \end{pmatrix}$. Since $\gamma'(0)$ can be any tangent vector in $\T_x \mathcal{M}_{\mathrm{sub}}$, $u$ can be any vector in $\mathfrak{L}_{x_*}$. Since $\gamma(t)$ is a first-order approximation of $\Exp_{x_*}(t\gamma'(0))$, we have that $\Exp_{x_*}^{-1}(\gamma(t)) = \gamma'(0) t + \mathcal{O}(t^2)$, where $\Exp$ denotes the exponential mapping on $\mathcal{M}$. It follows that
\begin{equation} \label{eq:61}
\lim_{t \rightarrow 0} \frac{ \|\Exp_{x_*}^{-1}(\gamma(t))\|_{\F} }{ t } = 1.
\end{equation}
Therefore, we have
\begin{align}
F(\gamma(t)) =& F(\gamma(0)) + (F \circ \gamma)'(0)t + (F \circ \gamma)''(b_0) t^2 \nonumber \\
=& F(\gamma(0)) + (F \circ \gamma)''(b_0) t^2 \geq F(x_*) + \frac{\varsigma}{2} \|\Exp_{x_*}^{-1}(\gamma(t))\|_{\F}^2, \label{eq:60}
\end{align}
where the first equation follows from Taylor's expansion with $b_0 \in (0, t)$, the second equation follows from that $x_*$ is a minimizer, and the last inequality follows from~\eqref{eq:59}. Inequality~\eqref{eq:60} gives
\[
(F \circ \gamma)''(b_0) \geq \frac{\varsigma}{2} \left( \frac{\|\Exp_{x_*}^{-1}(\gamma(t))\|_{\F}}{t} \right)^2.
\]
Taking $t \rightarrow 0$ and using~\eqref{eq:61} yield $(F \circ \gamma)''(0) \geq \varsigma / 2$. Combining this with~\eqref{eq:62} gives the desired result.
\end{proof}

Note that Assumption~\ref{as06} only requires strong convexity at a point, which can hold for a non-strongly convex function. For example, $h_1:\mathbb{R} \rightarrow \mathbb{R}: x \mapsto |x|$ is not a strongly convex function. But it is strongly convex at $x_* = 0$ by Definition~\ref{d01} since $|y| = h_1(y) \geq h_1(x_*) + (y-x_*)^2 = y^2$ holds for any $y \in [-1, 1]$.

The local convergence analysis is established using three steps. Suppose that $x_k$ is sufficiently close to a local minimizer $x_*$. Then we have that
\begin{enumerate}
	\item the search direction $d(x_k)$ in Step~\ref{a:RNG:st03} of Algorithm~\ref{alg:IRPN} %in~\eqref{eq:13} 
    equals the Newton direction $u(x_k)$ in~\eqref{3-3} if~\eqref{eq:09} is solved exactly (Section~\ref{sec01});
%	\item $\bar{d}(x_k)$ is 
	\item Algorithm~\ref{alg:tCG} stops only due to the accuracy condition in Step~\ref{a:tCG:st01} of Algorithm~\ref{alg:tCG} (Section~\ref{sec05}); and
	\item the accuracy condition in Step~\ref{a:tCG:st01} is sufficient for the local superlinear convergence of Algorithm~\ref{alg:IRPN} and the function value of $F$ is sufficiently descent every two steps locally using step size one (Section~\ref{sec06}).
\end{enumerate}

\subsubsection{Reformulation of the Newton equation~\eqref{3-3}} \label{sec01}

It has been shown in Section~\ref{sec04} that if $\bar{B}_{x_k}$ has full column rank, then the Newton equation~\eqref{3-3} is equivalent to~\eqref{eq:10}. The second equation in~\eqref{eq:10} is consistent with the definition of $\hat{d}(x_k)$. Next, we will show that the solution of the first equation in~\eqref{eq:10},
\begin{equation} \label{eq:11}
	[\bar{B}_{x_k} \bar{B}_{x_k}^{\dagger} + t_k (I_{j_k} - \bar{B}_{x_k} \bar{B}_{x_k}^{\dagger}) \mathcal{B}_{x_k} ] \bar{u}(x_k) = \bar{v}(x_k) - t_k (I_{j_k} - \bar{B}_{x_k} \bar{B}_{x_k}^{\dagger}) 
\mathfrak{B}_{x_k}^{(12)} \hat{v}(x_k),
\end{equation}
equals $\bar{d}(x_k)$ if~\eqref{eq:09} is solved exactly and certain reasonable assumptions hold. Specifically, by~\eqref{eq:11}, we have
\begin{align}
& \{I_{j_k} + (I_{j_k} - \bar{B}_{x_k} \bar{B}_{x_k}^{\dagger}) [-I_{j_k} + t_k \mathcal{B}_{x_k} ] \} \bar{u}(x_k) = \bar{v}(x_k) - t_k (I_{j_k} - \bar{B}_{x_k} \bar{B}_{x_k}^{\dagger}) \mathfrak{B}_{x_k}^{(12)} \hat{v}(x_k) \nonumber\\ &\Rightarrow \bar{u}(x_k) = \{I_{j_k} + (I_{j_k} - \bar{B}_{x_k} \bar{B}_{x_k}^{\dagger}) [-I_{j_k} + t_k \mathcal{B}_{x_k}] \}^{-1} [\bar{v}(x_k) - t_k (I_{j_k} - \bar{B}_{x_k} \bar{B}_{x_k}^{\dagger}) \mathfrak{B}_{x_k}^{(12)} \hat{v}(x_k)] \nonumber \\
&\Rightarrow \bar{u}(x_k) = \{I_{j_k} + (I_{j_k} - \bar{B}_{x_k} \bar{B}_{x_k}^{\dagger}) [-I_{j_k} + t_k \mathcal{B}_{x_k}] \}^{-1} \nonumber \\
&\qquad \qquad \{ I_{j_k} + (I_{j_k} - \bar{B}_{x_k} \bar{B}_{x_k}^{\dagger}) [-I_{j_k} + t_k \mathcal{B}_{x_k}] - (I_{j_k} - \bar{B}_{x_k} \bar{B}_{x_k}^{\dagger}) [-I_{j_k} + t_k \mathcal{B}_{x_k}] \} \bar{v}(x_k) \nonumber \\
&\qquad \qquad - \{I_{j_k} + (I_{j_k} - \bar{B}_{x_k} \bar{B}_{x_k}^{\dagger}) [-I_{j_k} + t_k \mathcal{B}_{x_k}] \}^{-1} t (I_{j_k} - \bar{B}_{x_k} \bar{B}_{x_k}^{\dagger}) \mathfrak{B}_{x_k}^{(12)} \hat{v}(x_k) \nonumber \\
&\Rightarrow  \bar{u}(x_k) = \bar{v}(x_k) - \{I_{j_k} + (I_{j_k} - \bar{B}_{x_k} \bar{B}_{x_k}^{\dagger}) [-I_{j_k} + t_k \mathcal{B}_{x_k}] \}^{-1} \nonumber \\
&\qquad \qquad ( I_{j_k} - \bar{B}_{x_k} \bar{B}_{x_k}^{\dagger} ) \{ [-I_{j_k} + t_k \mathcal{B}_{x_k}] \bar{v}(x_k) + t_k \mathfrak{B}_{x_k}^{(12)} \hat{v}(x_k) \} \nonumber \\
&\Rightarrow \bar{u}(x_k) = \bar{v}(x_k) - \{I_{j_k} + (I_{j_k} - \bar{B}_{x_k} \bar{B}_{x_k}^{\dagger}) Y_{x_k} \}^{-1} ( I_{j_k} - \bar{B}_{x_k} \bar{B}_{x_k}^{\dagger} ) \ell_{x_k}, \label{eq:12}
\end{align}
where the second derivation assumes the invertibility of the corresponding matrix and the last derivation is by letting $Y_{x_k}$ denote $-I_{j_k} + t \mathcal{B}_{x_k}$.

Lemma~\ref{le01} is crucial for proving $\bar{u}(x_k) = \bar{v}(x_k) + w(x_k)$, where $w(x_k)$ is obtained by assuming that~\eqref{eq:09} is solved exactly.

\begin{lemma} \label{le01}
Let $Y \in \mathbb{R}^{j \times j}$ and $B \in \mathbb{R}^{j \times m}$ with $m \leq j$. Suppose that $I_j + Y$ is symmetric positive definite over $\mathfrak{L} = \{w \in \mathbb{R}^j \mid B^T w = 0\}$ % \whcomm{[TODO, modified to over $\mathfrak{L}$?]}{} 
and that $B$ is full column rank. Then it holds that the unique solution of the problem
\begin{equation} \label{eq:01}
\min_{B^T w = 0} \ell^T w + \frac{1}{2} w^T (I_j + Y) w
\end{equation}
is given by
\begin{equation} \label{eq:07}
w_* = - \left[ I_j + (I_j - B B^{\dagger}) Y \right]^{-1} \left[ I_j - B B^{\dagger} \right] \ell.
\end{equation}
\end{lemma}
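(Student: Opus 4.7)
The plan is to treat this as a linearly constrained convex quadratic program and to identify the formula on the right of~\eqref{eq:07} as the output of the projected KKT system. Throughout, write $P = I_j - BB^{\dagger}$, which (since $B$ has full column rank) is the orthogonal projection onto $\mathfrak{L}$, while $BB^{\dagger} = B(B^T B)^{-1} B^T$ is the projection onto $\mathrm{Range}(B) = \mathfrak{L}^{\perp}$.

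First I would establish existence and uniqueness of the minimizer. Restricting the quadratic $\ell^T w + \tfrac{1}{2} w^T(I_j + Y) w$ to $\mathfrak{L}$ gives a quadratic whose Hessian is $I_j+Y$ acting on $\mathfrak{L}$, which by hypothesis is symmetric positive definite. Hence the restricted objective is strictly convex on the affine (here linear) feasible set, and a unique minimizer $w_*$ exists.

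Next I would derive~\eqref{eq:07} from the KKT conditions. Introducing a multiplier $\lambda \in \mathbb{R}^m$ for $B^T w = 0$, stationarity gives $(I_j+Y)w_* + \ell = -B\lambda \in \mathrm{Range}(B) = \mathfrak{L}^{\perp}$. Applying $P$ annihilates the right-hand side and yields $P\ell + P(I_j+Y)w_* = 0$. Since $w_* \in \mathfrak{L}$, one has $Pw_* = w_*$, so $P(I_j+Y)w_* = w_* + PYw_* = (I_j + PY)w_*$, and therefore
\begin{equation*}
(I_j + PY)\, w_* = -P\ell.
\end{equation*}
Once $I_j+PY$ is shown to be invertible, this is exactly the formula in~\eqref{eq:07}. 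Feasibility of the resulting $w_*$ is then automatic: from $w_* = -P\ell - PYw_*$, both summands lie in $\mathrm{Range}(P)=\mathfrak{L}$, so $B^T w_* = 0$.

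The main obstacle is the invertibility of $I_j + PY$, because $PY$ need not be symmetric and the positive-definiteness hypothesis only applies on the subspace $\mathfrak{L}$. I would prove it by showing the kernel is trivial. If $(I_j+PY)v = 0$, then $v = -PYv \in \mathrm{Range}(P) = \mathfrak{L}$, so $Pv = v$. Rewriting, $Pv + PYv = P(I_j+Y)v = 0$, so $(I_j+Y)v \in \mathrm{Range}(B) = \mathfrak{L}^{\perp}$. Pairing with $v \in \mathfrak{L}$ gives $v^T(I_j+Y)v = 0$, and the positive-definiteness of $I_j+Y$ on $\mathfrak{L}$ forces $v = 0$. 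Combined with the existence and uniqueness established above, this completes the identification of $w_*$ with the expression in~\eqref{eq:07}.
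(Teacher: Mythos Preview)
Your proof is correct. It takes a somewhat different route than the paper's argument, though, and the comparison is worth noting.

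The paper parametrizes $\mathfrak{L}$ by an orthonormal basis $Q\in\mathbb{R}^{j\times(j-m)}$, writes the unconstrained optimality condition in the coordinates $c$ with $w=Qc$ to obtain $w_*=-Q\bigl(I_{j-m}+Q^TYQ\bigr)^{-1}Q^T\ell$, and then invokes the push-through identity $Q(I_{j-m}+MQ)^{-1}=(I_j+QM)^{-1}Q$ with $M=Q^TY$ to rewrite this as $-(I_j+QQ^TY)^{-1}QQ^T\ell$, finishing via $QQ^T=I_j-BB^{\dagger}$. Your approach instead applies the projection $P=I_j-BB^{\dagger}$ directly to the KKT stationarity equation and reads off $(I_j+PY)w_*=-P\ell$ in one step.

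What each buys: the paper's derivation is short once the push-through identity is accepted, but it does not explicitly justify the invertibility of $I_j+QQ^TY=I_j+PY$ (it only uses that $I_{j-m}+Q^TYQ$ is invertible, which follows from positive definiteness on $\mathfrak{L}$). Your kernel argument for the invertibility of $I_j+PY$ closes that gap cleanly and avoids introducing the basis $Q$ and the auxiliary identity altogether. Either route is fine; yours is a little more self-contained.
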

\begin{proof}
Let $Q \in \mathbb{R}^{j \times {(j - m)}}$ be an orthonormal basis of $\mathfrak{L}$. By the first-order optimality condition, the solution $w_*$ is given by
$
w_* = - Q (Q^T (I_j + Y) Q)^{-1} Q^T \ell = - Q (I_j + Q^T Y Q)^{-1} Q^T \ell.
$
It follows from $Q (I_{j-m} + M Q)^{-1} = (I_j + Q M)^{-1} Q$ that $w_* = - (I_j + Q Q^T Y)^{-1} Q Q^T \ell$, where $M \in \mathbb{R}^{(j-m) \times j}$ is full row rank. The final result follows from $Q Q^T = I_j - B B^{\dagger}$.
\end{proof}
By~\eqref{eq:10},~\eqref{eq:12}, and Lemma~\ref{le01}, sufficient conditions that guarantee $u(x_k) = d(x_k)$ are given in Corollary~\ref{co01}.
\begin{corollary} \label{co01}
	Suppose $\bar{B}_{x_k}$ has full column rank, $\mathcal{B}_{x_k}$ is symmetric positive definite over $\mathfrak{L}_{x_k}$. Then the proximal Newton equation $J(x_k) [u(x_k)] = - v(x_k)$ in~\eqref{3-3} can be computed by
	\[
	u(x_k) = 
 \begin{pmatrix}
	\bar{v}(x_k) + w(x_k) \\
	\hat{v}(x_k)
\end{pmatrix},
	\]
	where $w(x_k) = \argmin_{\bar{B}_{x_k}^T w = 0} \ell_{x_k}^T w + \frac{1}{2} w^T \mathcal{B}_{x_k} w$.
\end{corollary}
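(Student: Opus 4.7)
The plan is to reduce~\eqref{3-3} to the block form~\eqref{eq:10} and then handle each block. Under the assumption that $\bar{B}_{x_k}$ has full column rank, the discussion preceding the corollary already establishes this equivalence, and the second block reads $\hat{u}(x_k) = \hat{v}(x_k)$, which is exactly the second component of the claimed formula. Hence the entire task reduces to showing $\bar{u}(x_k) = \bar{v}(x_k) + w(x_k)$ for the first block.

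For this, I would produce two closed-form expressions for the same quantity and compare. On one side, the algebraic chain in~\eqref{eq:11}--\eqref{eq:12} gives
\[
\bar{u}(x_k) - \bar{v}(x_k) = -\bigl\{I_{j_k} + (I_{j_k} - \bar{B}_{x_k}\bar{B}_{x_k}^{\dagger})\, Y_{x_k}\bigr\}^{-1} (I_{j_k} - \bar{B}_{x_k}\bar{B}_{x_k}^{\dagger})\, \ell_{x_k}
\]
with $Y_{x_k} = -I_{j_k} + t_k \mathcal{B}_{x_k}$. On the other side, I would invoke Lemma~\ref{le01} with $B = \bar{B}_{x_k}$, $Y = Y_{x_k}$, and the same right-hand side $\ell = \ell_{x_k}$ to express the unique minimizer $w(x_k)$ via the identical formula~\eqref{eq:07}. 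Matching the two expressions immediately yields $w(x_k) = \bar{u}(x_k) - \bar{v}(x_k)$, finishing the first block and hence the corollary.

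The non-routine ingredient is discharging the two hypotheses on which the comparison rests: (i) the invertibility of $I_{j_k} + (I_{j_k} - \bar{B}_{x_k}\bar{B}_{x_k}^{\dagger}) Y_{x_k}$, used implicitly in the derivation of~\eqref{eq:12}, and (ii) the positive definiteness of $I_{j_k} + Y_{x_k} = t_k \mathcal{B}_{x_k}$ on $\mathfrak{L}_{x_k}$ demanded by Lemma~\ref{le01}. Item (ii) is immediate from the standing assumption $\mathcal{B}_{x_k} \succ 0$ on $\mathfrak{L}_{x_k}$ together with $t_k > 0$. Item (i) requires a little care because the operator is not symmetric; I would argue directly. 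Writing $A = I_{j_k} + (I_{j_k} - \bar{B}_{x_k}\bar{B}_{x_k}^{\dagger}) Y_{x_k}$ and supposing $Aw = 0$, one gets $w = -(I_{j_k} - \bar{B}_{x_k}\bar{B}_{x_k}^{\dagger}) Y_{x_k} w$, which forces $w \in \mathfrak{L}_{x_k}$; substituting back reduces $Aw = 0$ to $t_k (I_{j_k} - \bar{B}_{x_k}\bar{B}_{x_k}^{\dagger}) \mathcal{B}_{x_k} w = 0$, so $\mathcal{B}_{x_k} w \in \mathrm{range}(\bar{B}_{x_k})$ and hence $w^T \mathcal{B}_{x_k} w = 0$; combined with $w \in \mathfrak{L}_{x_k}$ and the positive definiteness of $\mathcal{B}_{x_k}$ on $\mathfrak{L}_{x_k}$, this forces $w = 0$. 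This invertibility check is the only real content beyond the algebra already presented; everything else is bookkeeping to line up the two closed-form solutions.
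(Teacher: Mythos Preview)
Your proposal is correct and follows exactly the route the paper takes: the paper's ``proof'' of Corollary~\ref{co01} is just the sentence ``By~\eqref{eq:10},~\eqref{eq:12}, and Lemma~\ref{le01},'' and you have unpacked precisely those three ingredients. Your explicit verification that $I_{j_k} + (I_{j_k} - \bar{B}_{x_k}\bar{B}_{x_k}^{\dagger}) Y_{x_k}$ is invertible is a genuine addition over the paper, which in deriving~\eqref{eq:12} simply writes ``the second derivation assumes the invertibility of the corresponding matrix'' without justification.
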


\subsubsection{Termination conditions of Algorithm~\ref{alg:tCG}} \label{sec05}

Lemma~\ref{le13} is used in Lemma~\ref{le04}, which implies that Algorithm~\ref{alg:tCG} does not terminate due to the condition in Step~\ref{a:tCG:st04} if $\vartheta$ is sufficiently small and $x$ is in a sufficiently small neighborhood of $x_*$.

\begin{lemma} \label{le13}
Suppose Assumptions~\ref{as02} and \ref{as03} hold. Then there exists a neighborhood of $x_*$, denoted by $\mathcal{V}_1$, and a positive constant $\sigma_{\epsilon}$ such that $B_x$ is a smooth function of $x$ in $\mathcal{V}_1$ and $\sigma_{\min}(\bar{B}_x) > \sigma_{\epsilon}$ for any $x \in \mathcal{V}_1$.
\end{lemma}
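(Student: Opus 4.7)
The plan is to combine two standard ingredients: a local smooth parametrization of the normal space, and the continuity of singular values. The statement is essentially a perturbation claim: since $\bar{B}_{x_*}$ has full column rank by Assumption~\ref{as03}, the same must remain true at all nearby $x$ once we know that $\bar{B}_x$ varies continuously with $x$ near $x_*$.

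First, I would invoke the construction cited right after the definition of $B_x$ in Section~\ref{sec08}, namely that by \cite{HAG2016VT,SAHJV2024} there exists a neighborhood $\mathcal{V}_0$ of $x_*$ on $\mathcal{M}$ on which an orthonormal basis $B_x$ of $\N_x \mathcal{M}$ can be chosen to depend smoothly on $x$. This immediately gives the first conclusion. Next, I would use Assumption~\ref{as02} to fix the block partition: because the support of $x + v(x)$ is identically equal to the first $j$ coordinates for every $x$ in a neighborhood $\mathcal{U}_{x_*}$, the operator $\bar{B}_x$ is unambiguously defined as the first $j$ rows of $B_x$ throughout $\mathcal{U}_{x_*} \cap \mathcal{V}_0$, and hence $x \mapsto \bar{B}_x$ is smooth there.

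Then I would invoke continuity of singular values: the map $A \mapsto \sigma_{\min}(A)$ is Lipschitz continuous on $\mathbb{R}^{j \times d}$ (it is $1$-Lipschitz in the operator norm by Weyl's inequality). Combined with the smoothness of $x \mapsto \bar{B}_x$, this makes $x \mapsto \sigma_{\min}(\bar{B}_x)$ a continuous scalar function near $x_*$. By Assumption~\ref{as03}, $\bar{B}_{x_*}$ has full column rank, so $\sigma_{\min}(\bar{B}_{x_*}) > 0$. Setting $\sigma_{\epsilon} = \tfrac{1}{2} \sigma_{\min}(\bar{B}_{x_*})$ and choosing $\mathcal{V}_1 \subset \mathcal{U}_{x_*} \cap \mathcal{V}_0$ small enough that $|\sigma_{\min}(\bar{B}_x) - \sigma_{\min}(\bar{B}_{x_*})| < \sigma_{\epsilon}$ on $\mathcal{V}_1$ yields the claimed uniform lower bound.

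I do not anticipate any real obstacle. The only point that deserves care is that the partition into $\bar{\cdot}$ and $\hat{\cdot}$ is a priori defined in terms of the (possibly $x$-dependent) support of $x + v(x)$; if this support were to vary with $x$, then $\bar{B}_x$ would not even be a function taking values in a single matrix space, let alone a continuous one. Assumption~\ref{as02} removes exactly this issue by making the support constant on a neighborhood, after which the argument is a textbook continuity-of-$\sigma_{\min}$ perturbation.
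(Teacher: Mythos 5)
Your proof is correct and follows essentially the same route as the paper's: invoke the smooth local choice of $B_x$ from \cite{HAG2016VT}, use Assumption~\ref{as02} to fix the row partition so that $\bar{B}_x$ is a well-defined smooth map near $x_*$, then use continuity of $\sigma_{\min}$ together with $\sigma_{\min}(\bar{B}_{x_*})>0$ from Assumption~\ref{as03} to extract the uniform lower bound $\sigma_{\epsilon}=\sigma_{\min}(\bar{B}_{x_*})/2$ on a shrunken neighborhood. Your remark about Weyl's inequality just makes explicit the continuity step that the paper leaves implicit.
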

\begin{proof}
It has been shown in~\cite{HAG2016VT} that $B_x$ can be chosen such that $B_x$ is a smooth function of $x$ in a small neighborhood of $x_*$, denoted by $\mathcal{V}_1$. Since $\bar{B}_{x_*}$ has full column rank by Assumption~\ref{as03}, it holds that $\sigma_{\min}(\bar{B}_{x_*}) > 0$. By Assumption~\ref{as02}, the support of $x + v$ does not change in $\mathcal{U}_{x_*}$. Therefore, $\bar{B}_x$ is smooth in $\mathcal{V}_1 \cap \mathcal{U}_{x_*}$. It follows that one can shrink $\mathcal{V}_1$ such that $\sigma_{\min}(\bar{B}_x) > \sigma_{\min}(\bar{B}_{x_*})/2$ for any $x \in \mathcal{V}_1$, i.e., setting $\sigma_{\epsilon} = \sigma_{\min}(\bar{B}_{x_*})/2$.
\end{proof}

\begin{lemma} \label{le04}
Suppose Assumptions~\ref{as01}, ~\ref{as02}, \ref{as03}, and~\ref{as04} hold. Then there exists a neighborhood of $x_*$, denoted by $\mathcal{V}_2$, and a positive constant $\chi_{\epsilon}$ such that the smallest eigenvalue of $\mathcal{B}_x$ on $\mathfrak{L}_{x}$ is greater than $\chi_{\epsilon}$ for all $x \in \mathcal{V}_2$. This implies $\mathcal{B}_x$ is positive definite on $\mathfrak{L}_x$ for all $x \in \mathcal{V}_2$.
\end{lemma}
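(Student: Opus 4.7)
The plan is a continuity argument: show that a symmetric matrix representation of $\mathcal{B}_x$ restricted to $\mathfrak{L}_x$ depends continuously on $x$ in a neighborhood of $x_*$, then invoke Assumption~\ref{as04} together with the continuous dependence of eigenvalues on matrix entries to obtain a uniform positive lower bound.

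First, by Lemma~\ref{le13} the basis $B_x$ of $\N_x \M$ can be chosen smooth in $x$ on a neighborhood $\mathcal{V}_1$ of $x_*$, with $\sigma_{\min}(\bar{B}_x) > \sigma_{\epsilon}$. The orthogonal projector $I_{j} - \bar{B}_x \bar{B}_x^{\dagger}$ onto $\mathfrak{L}_x$ therefore has constant rank $j - (n-d)$ and depends smoothly on $x$ on $\mathcal{V}_1$, so after possibly shrinking $\mathcal{V}_1$ one can extract a smooth orthonormal basis $Q_x \in \mathbb{R}^{j \times (j - (n-d))}$ of $\mathfrak{L}_x$ (e.g., by a smooth QR factorization of $(I_j - \bar{B}_x \bar{B}_x^{\dagger}) V_0$ for a fixed complementary subspace $V_0$).

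Next I would establish that $\mathcal{B}_x = H_x^{(11)} - L_x^{(11)}$ is continuous in $x$ on $\mathcal{V}_1$. The block $H_x^{(11)}$ is a fixed principal submatrix of $\nabla^2 f(x)$ — fixed because Assumption~\ref{as02} freezes the support indices $\{1,\ldots,j\}$ throughout $\mathcal{U}_{x_*}$ — and is continuous by Assumption~\ref{as01}. The block $L_x^{(11)}$ comes from $\mathcal{L}_x = \mathcal{W}_x(\cdot, B_x \lambda(x))$; the Weingarten map is smooth in $x$ on $\M$, and the Lagrange multiplier $\lambda(x)$ defined through~\eqref{eq:Psilambda} is continuous near $x_*$ because, under Assumption~\ref{as02}, the proximal mapping $\prox_{t h}$ is locally smooth along the frozen support so that~\eqref{eq:Psilambda} becomes a $C^1$ equation in $\lambda$ whose Jacobian is nonsingular at $x_*$. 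This continuity of $\lambda$, and hence of $L_x^{(11)}$, has in fact already been worked out in the local analysis of~\cite{SAHJV2024}, which I would cite rather than redo.

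Finally, the eigenvalues of $\mathcal{B}_x$ on $\mathfrak{L}_x$ coincide with the eigenvalues of the symmetric matrix $Q_x^T \mathcal{B}_x Q_x$, which is continuous in $x$ by the two previous steps. Assumption~\ref{as04} gives $\chi_{\min}(Q_{x_*}^T \mathcal{B}_{x_*} Q_{x_*}) > 0$, and Weyl's inequality yields continuity of the smallest eigenvalue in the matrix entries, so a sufficiently small neighborhood $\mathcal{V}_2 \subset \mathcal{V}_1$ of $x_*$ produces a constant $\chi_{\epsilon} > 0$ with $\chi_{\min}(Q_x^T \mathcal{B}_x Q_x) > \chi_{\epsilon}$ on $\mathcal{V}_2$. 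The main obstacle is the continuity of $\lambda(x)$: because $\prox_{t h}$ is only semismooth in general, one cannot directly invoke a classical implicit function theorem on~\eqref{eq:Psilambda}, and the whole argument hinges on using Assumption~\ref{as02} to restore local $C^1$ regularity along the active support; once that is in hand, the rest is routine linear algebra and eigenvalue perturbation.
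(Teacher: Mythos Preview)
Your proposal is correct and follows essentially the same route as the paper: build a smooth orthonormal basis $Q_x$ of $\mathfrak{L}_x$ via Lemma~\ref{le13}, argue that $\mathcal{B}_x$ varies continuously in $x$ near $x_*$, and then use continuity of the smallest eigenvalue of $Q_x^T \mathcal{B}_x Q_x$ together with Assumption~\ref{as04} to obtain the uniform lower bound $\chi_\epsilon$. Your treatment of the continuity of $\lambda(x)$ (and hence of $L_x^{(11)}$) is actually more explicit than the paper's, which simply asserts continuity of $\mathcal{B}_x$ from Assumptions~\ref{as01} and~\ref{as02} without spelling out the implicit-function step for~\eqref{eq:Psilambda}.
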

\begin{proof}
By Assumption~\ref{as02}, the support of $x + v$ does not change in $\mathcal{U}_{x_*}$. Combining this with Assumption~\ref{as01} yields that $\mathcal{B}_x$ is a continuous function of $x$ in $\mathcal{U}_{x_*}$. By Lemma~\ref{le13}, we have that $\bar{B}_x$ is a smooth function of $x$ and $\bar{B}_x$ has full column rank in $\mathcal{V}_1$.
Thus, one can choose a smooth function $\bar{Q}: \mathcal{V}_1 \rightarrow \mathbb{R}^{j \times (j - (n - d))}: x \mapsto \bar{Q}_x$ such that $[\bar{B}_x \; \bar{Q}_x]$ is a square full rank matrix and $\bar{Q}_x$ is orthonormal.

By Assumption~\ref{as04}, $\mathcal{B}_{x_*}$ is positive definite on $\mathfrak{L}_{x_*} = \{w \mid \bar{B}_{x_*}^T w = 0\} = \{w \mid w \in \mathrm{span}(\bar{Q}_{x_*})\}$. Let $\chi_{\min}$ denotes the smallest eigenvalue of $\mathcal{B}_{x_*}$ on $\mathfrak{L}_{x_*}$. Therefore, it holds that
\[
c^T \bar{Q}_{x_*}^T \mathcal{B}_{x_*} \bar{Q}_{x_*} c \geq \chi_{\min} \|c\|_{\F}^2,
\]
for any $c \in \mathbb{R}^{j - (n - d)}$. Since $Q_x$ and $\mathcal{B}_{x}$ are continuous with respect to $x$, we have that there exists a neighborhood of $x_*$, denoted by $\mathcal{V}_2 \subseteq \mathcal{V}_1$, such that for any $x \in \mathcal{V}_2$,
\[
c^T \bar{Q}_{x}^T \mathcal{B}_{x} \bar{Q}_{x} c \geq \frac{\chi_{\min}}{2} \|c\|_{\F}^2.
\]
holds for any $c \in \mathbb{R}^{j - (n - d)}$. It follows that the smallest eigenvalue of $\mathcal{B}_{x}$ is greater than $\frac{\chi_{\min}}{2}$ on $\mathfrak{L}_x = \{w \mid \bar{B}_{x} w = 0\} = \{w \mid w \in \mathrm{span}(\bar{Q}_{x})\}$ for any $x \in \mathcal{V}_2$, which completes the proof by setting $\xi_{\epsilon} = \frac{\chi_{\min}}{2}$.
\end{proof}

Lemma~\ref{le12} shows that Algorithm~\ref{alg:tCG} does not terminate due to Step~\ref{a:tCG:st02} or the first condition in Step~\ref{a:tCG:st03} for $\gamma \leq \chi_{\epsilon} / 2$ and $x$ in a small neighborhood of $x_*$.

\begin{lemma} \label{le12}
Suppose Assumptions~\ref{as01}, \ref{as02}, \ref{as03}, and~\ref{as04} hold. If $\tau$ is sufficiently large, then there exists a neighborhood of $x_*$, denoted by $\mathcal{V}_3$, such that for any $x \in \mathcal{V}_3$ and any $\eta \in \T_x \mathcal{M}$, it holds that 
\[
\inner[]{\eta}{ \mathfrak{B}_x \eta} + \tau \|\hat{\eta}\|_{\F}^2 \geq \frac{\chi_{\epsilon}}{2} \|\eta\|_{\F}^2,
\]
where $\chi_{\epsilon}$ is defined in Lemma~\ref{le04}.
\end{lemma}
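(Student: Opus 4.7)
The plan is to decompose $\bar{\eta}$ into a component inside $\mathfrak{L}_x = \{w: \bar{B}_x^T w = 0\}$ and its orthogonal complement, so that the coercivity supplied by Lemma~\ref{le04} can be applied to the first component while the second component is controlled by $\hat{\eta}$ via the tangency constraint. Concretely, write $\bar{\eta} = \bar{\eta}_1 + \bar{\eta}_2$, where $\bar{\eta}_1 = (I_j - \bar{B}_x \bar{B}_x^{\dagger})\bar{\eta} \in \mathfrak{L}_x$ and $\bar{\eta}_2 = \bar{B}_x \bar{B}_x^{\dagger}\bar{\eta}$. Since $\eta \in \T_x\mathcal{M}$ is equivalent to $\bar{B}_x^T \bar{\eta} + \hat{B}_x^T \hat{\eta} = 0$, we get $\bar{B}_x^T \bar{\eta}_2 = -\hat{B}_x^T \hat{\eta}$, which combined with Lemma~\ref{le13} (giving $\sigma_{\min}(\bar{B}_x) \geq \sigma_{\epsilon} > 0$ on a neighborhood) yields a uniform bound $\|\bar{\eta}_2\|_{\F} \leq C\|\hat{\eta}\|_{\F}$ for some constant $C$ depending only on $\sigma_{\epsilon}$ and an upper bound on $\|\hat{B}_x\|$.

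Next, I would expand
\[
\inner[]{\eta}{\mathfrak{B}_x \eta} = \bar{\eta}^T \mathcal{B}_x \bar{\eta} + 2\bar{\eta}^T \mathfrak{B}_x^{(12)} \hat{\eta} + \hat{\eta}^T \mathfrak{B}_x^{(22)} \hat{\eta}
\]
and use $\bar{\eta} = \bar{\eta}_1 + \bar{\eta}_2$. Lemma~\ref{le04} gives $\bar{\eta}_1^T \mathcal{B}_x \bar{\eta}_1 \geq \chi_{\epsilon} \|\bar{\eta}_1\|_{\F}^2$ throughout $\mathcal{V}_2$. The remaining contributions are cross terms of the form $\bar{\eta}_1^T \mathcal{B}_x \bar{\eta}_2$, $\bar{\eta}_1^T \mathfrak{B}_x^{(12)}\hat{\eta}$, $\bar{\eta}_2^T \mathfrak{B}_x^{(12)}\hat{\eta}$, and quadratics in $\bar{\eta}_2$ and $\hat{\eta}$. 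Since $\nabla^2 f$ and the Weingarten map are continuous, all operator norms $\|\mathcal{B}_x\|_2$, $\|\mathfrak{B}_x^{(12)}\|_2$, $\|\mathfrak{B}_x^{(22)}\|_2$ are uniformly bounded on a sufficiently small neighborhood of $x_*$. I would then apply Young's inequality to each cross term, choosing the parameters so that the $\|\bar{\eta}_1\|_{\F}^2$ contributions absorb at most, say, $3\chi_{\epsilon}/8$ of $\chi_{\epsilon}\|\bar{\eta}_1\|_{\F}^2$, leaving the residuals as (constant)$\cdot\|\bar{\eta}_2\|_{\F}^2 +$ (constant)$\cdot\|\hat{\eta}\|_{\F}^2$.

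Using $\|\bar{\eta}_2\|_{\F}^2 \leq C^2 \|\hat{\eta}\|_{\F}^2$ converts all remaining bad terms into a single $\|\hat{\eta}\|_{\F}^2$ term with a finite constant $C_*$ depending only on the uniform bounds. Adding $\tau \|\hat{\eta}\|_{\F}^2$ then gives
\[
\inner[]{\eta}{\mathfrak{B}_x\eta} + \tau \|\hat{\eta}\|_{\F}^2 \geq \tfrac{5\chi_{\epsilon}}{8}\|\bar{\eta}_1\|_{\F}^2 + (\tau - C_*)\|\hat{\eta}\|_{\F}^2.
\]
Since $\|\eta\|_{\F}^2 = \|\bar{\eta}_1\|_{\F}^2 + \|\bar{\eta}_2\|_{\F}^2 + \|\hat{\eta}\|_{\F}^2 \leq \|\bar{\eta}_1\|_{\F}^2 + (1 + C^2)\|\hat{\eta}\|_{\F}^2$, the right-hand side dominates $\tfrac{\chi_{\epsilon}}{2}\|\eta\|_{\F}^2$ provided $\tau \geq C_* + \tfrac{\chi_{\epsilon}}{2}(1 + C^2)$. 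Choosing $\tau$ accordingly and taking $\mathcal{V}_3 \subseteq \mathcal{V}_1 \cap \mathcal{V}_2$ small enough to validate the uniform constants concludes the proof.

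The main obstacle is the bookkeeping of constants: one must make sure that $C$, $C_*$, and all intermediate operator-norm bounds are \emph{simultaneously} uniform on a single neighborhood of $x_*$, so that the threshold for $\tau$ depends only on local data at $x_*$ and not on the particular $x \in \mathcal{V}_3$. This relies on continuity of $\nabla^2 f$, the Weingarten map, and smoothness of $B_x$ (hence of $\bar{B}_x$, $\hat{B}_x$, and $\bar{B}_x^{\dagger}$) on $\mathcal{V}_1$ as provided by Assumption~\ref{as01}, Assumption~\ref{as02}, and Lemma~\ref{le13}.
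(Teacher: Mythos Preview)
Your proposal is correct and follows essentially the same approach as the paper: decompose $\bar{\eta}$ into its $\mathfrak{L}_x$-component and its $\mathrm{span}(\bar{B}_x)$-component, use the tangency relation $\bar{B}_x^T\bar{\eta}_2 = -\hat{B}_x^T\hat{\eta}$ together with Lemma~\ref{le13} to bound $\|\bar{\eta}_2\|_{\F}$ by a constant times $\|\hat{\eta}\|_{\F}$, apply Lemma~\ref{le04} to the $\bar{\eta}_1$-quadratic, and absorb all cross terms via Young's inequality into the $\tau\|\hat{\eta}\|_{\F}^2$ term. The only cosmetic difference is that the paper groups $(\bar{\eta}_2,\hat{\eta})$ together and bounds the corresponding block by a single operator norm $b_1 = \max_{x \in \mathcal{V}_1 \cap \mathcal{V}_2}\|\mathfrak{B}_x\|_2$, whereas you expand into the individual blocks $\mathcal{B}_x$, $\mathfrak{B}_x^{(12)}$, $\mathfrak{B}_x^{(22)}$; the resulting threshold on $\tau$ differs only in bookkeeping.
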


\begin{proof}
Decompose $\bar{\eta} = \bar{\eta}_1 + \bar{\eta}_2$ such that $\bar{B}_x^T \bar{\eta}_1 = 0$ and $\bar{\eta}_2 \in \mathrm{span}(\bar{B}_x)$. By Lemma~\ref{le13}, we have that $\sigma_{\min}(\bar{B}_x) > \sigma_{\epsilon}$ and $\bar{B}_x$ is a smooth function of $x$ for any $x \in \mathcal{V}_1$. Therefore, $b_0 = \max_{x \in \mathcal{V}_1} \|\bar{B}_x (\bar{B}_x^T \bar{B}_x)^{-1} \hat{B}_x^T\|_2$ is finite. It follows that for any $x \in \mathcal{V}_1$, it holds that
\begin{align} 
\|\bar{\eta}_2\|_{\F} =& \|\bar{B}_x (\bar{B}_x^T \bar{B}_x)^{-1} \bar{B}_x^T \bar{\eta}_2\|_{\F} = \|- \bar{B}_x (\bar{B}_x^T \bar{B}_x)^{-1} \hat{B}_x^T \hat{\eta}\|_{\F} \nonumber \\
\leq& \|- \bar{B}_x (\bar{B}_x^T \bar{B}_x)^{-1} \hat{B}_x^T\|_2 \|\hat{\eta}\|_{\F} \leq b_0 \|\hat{\eta}\|_{\F}, \label{eq:41}
\end{align}
where the first equation follows from $\bar{\eta}_2 \in \mathrm{span}(\bar{B}_x)$ and the second equation follows from $\eta \in \T_x \mathcal{M} \Leftrightarrow B_x^T \eta = 0 \Leftrightarrow \bar{B}_x^T \bar{\eta} + \hat{B}_x^T \hat{\eta} = 0 \Leftrightarrow \bar{B}_x^T \bar{\eta}_2 + \hat{B}_x^T \hat{\eta} = 0$.
For any $x \in \mathcal{V}_1 \cap \mathcal{V}_2$, we have
\begin{align*}
& \inner[]{\eta}{ \mathfrak{B}_x \eta} + \tau \|\hat{\eta}\|_{\F}^2 \\
=& \inner[]{\bar{\eta}_1}{\mathcal{B}_x \bar{\eta}_1} + 2 \inner[]{\bar{\eta}_1}{ \begin{pmatrix} \mathcal{B}_x & \mathfrak{B}_x^{(12)} \end{pmatrix} \begin{pmatrix} \bar{\eta}_2 \\ \hat{\eta} \end{pmatrix}} + \inner[]{\begin{pmatrix} \bar{\eta}_2 \\ \hat{\eta} \end{pmatrix}}{ \mathfrak{B}_x \begin{pmatrix} \bar{\eta}_2 \\ \hat{\eta} \end{pmatrix}} + \frac{\tau}{2} \|\hat{\eta}\|_{\F}^2 + \frac{\tau}{2} \|\hat{\eta}\|_{\F}^2 \\
\geq& \chi_{\epsilon} \|\bar{\eta}_1\|_{\F}^2 - 2 b_1 \|\bar{\eta}_1\|_{\F} \left\|\begin{pmatrix} \bar{\eta}_2 \\ \hat{\eta} \end{pmatrix}\right\|_{\F} - b_1 \left\|\begin{pmatrix} \bar{\eta}_2 \\ \hat{\eta} \end{pmatrix}\right\|_{\F}^2 + \frac{\tau}{2} \|\hat{\eta}\|_{\F}^2 + \frac{\tau}{2 b_0} \|\bar{\eta}_2\|_{\F}^2 \\
\geq& \chi_{\epsilon} \|\bar{\eta}_1\|_{\F}^2 - \left( \left( \sqrt{\frac{\chi_{\epsilon}}{2}}\|\bar{\eta}_1\|_{\F}\right)^2 + \left( \sqrt{\frac{2}{\chi_{\epsilon}}} b_1  \left\|\begin{pmatrix} \bar{\eta}_2 \\ \hat{\eta} \end{pmatrix}\right\|_{\F} \right)^2 \right) \\
&\qquad \qquad - b_1 \left\|\begin{pmatrix} \bar{\eta}_2 \\ \hat{\eta} \end{pmatrix}\right\|_{\F}^2 + \frac{\tau}{2} \|\hat{\eta}\|_{\F}^2 + \frac{\tau}{2 b_0} \|\bar{\eta}_2\|_{\F}^2 \\
\geq& \frac{\chi_{\epsilon}}{2} \| \bar{\eta}_1 \|_{\F}^2 - \left( \frac{2}{\chi_{\epsilon}} b_1^2 + b_1 \right) (b_0^2 + 1) \|\hat{\eta}\|_{\F}^2 + \frac{\tau}{2} \|\hat{\eta}\|_{\F}^2 + \frac{\tau}{2 b_0} \|\bar{\eta}_2\|_{\F}^2,
\end{align*}
where the first inequality follows from Lemma~\ref{le04}, the Cauchy-Schwarz inequality, $b_1 = \max_{x \in \mathcal{V}_1 \cap \mathcal{V}_2} \|\mathfrak{B}_x\|_2$, and~\eqref{eq:41}, and the second inequality follows from the inequality $2 a b \leq a^2 + b^2$, and the last inequality follows from~\eqref{eq:41}. 
If $\frac{\tau}{2 b_0} \geq \frac{\chi_{\epsilon}}{2}$ and $\frac{\tau}{2} - \left( \frac{2}{\chi_{\epsilon}} b_1^2 + b_1 \right) (b_0^2 + 1) \geq \frac{\chi_{\epsilon}}{2}$, 
then it holds that $\inner[]{\eta}{ \mathfrak{B}_x \eta} + \tau \|\hat{\eta}\|_{\F}^2 \geq \frac{\chi_{\epsilon}}{2} (\|\bar{\eta}_1\|_{\F}^2 + \|\bar{\eta}_2\|_{\F}^2 + \|\hat{\eta}\|_{\F}^2) = \frac{\chi_{\epsilon}}{2} \|\eta\|_{\F}^2$.
\end{proof}

Lemma~\ref{le05} states that $\|x - x_*\|$ can be bounded by the norm of $v(x)$ and is used to show that $\mathrm{sign}(\bar{x}_*)$ is the same as $\mathrm{sign}(\bar{x} + \bar{v})$ if $x$ is close to $x_*$. This result is used in Lemma~\ref{le14}.
\begin{lemma} \label{le05}
Suppose Assumptions~\ref{as01}, \ref{as02}, \ref{as03} and~\ref{as04} hold. Then there exists a neighborhood of $x_*$, denoted by $\mathcal{V}_4$, two positive constants $C_U$ and $C_L$ such that inequality
\[
C_L \| v(x) \|_{\F} \leq \| x - x_*\|_{\F} \leq C_U \| v(x) \|_{\F}
\]
holds for all $x \in \mathcal{V}_4$.
\end{lemma}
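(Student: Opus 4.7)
The plan is to prove the two inequalities separately, using $v(x_*) = 0$ (Proposition~\ref{p01}) together with semismoothness of $v$ for the lower bound and the non-degeneracy of $\mathcal{B}_{x_*}$ (Assumption~\ref{as04}) for the upper bound.

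First I would treat the lower bound $C_L\,\|v(x)\|_{\F}\le\|x-x_*\|_{\F}$, which follows from local Lipschitz continuity of $v$ at $x_*$. By~\cite{SAHJV2024}, $v$ is semismooth in $x$ in a neighborhood of $x_*$, hence locally Lipschitz continuous; together with $v(x_*)=0$ this gives $\|v(x)\|_{\F}\le\tilde{L}\|x-x_*\|_{\F}$ and one may take $C_L=1/\tilde{L}$. An explicit Lipschitz constant can also be read off by implicitly differentiating the KKT system~\eqref{3-1}: Assumption~\ref{as02} forces $\mathrm{sign}(\bar{x}+\bar{v}(x))$ to be locally constant, so the proximal map acts as an affine operator on the support, and the Lagrange multiplier $\lambda(x)$ depends smoothly on $x$ because $\bar{B}_x$ has full column rank by Lemma~\ref{le13}.

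For the harder direction $\|x-x_*\|_{\F}\le C_U\|v(x)\|_{\F}$, the component orthogonal to the support is trivial: Assumption~\ref{as02} gives $\hat{v}(x)=-\hat{x}$, and since $\hat{x}_*=0$ we immediately obtain $\|\hat{x}-\hat{x}_*\|_{\F}=\|\hat{v}(x)\|_{\F}$. For the support component, I would subtract the KKT relation~\eqref{3-1} at $x$ from the one at $x_*$. Assumption~\ref{as02} makes $\mathrm{sign}(\bar{x}+\bar{v}(x))=\mathrm{sign}(\bar{x}_*)$, so the nonsmooth term cancels, and a first-order Taylor expansion of $\nabla f$ together with smoothness of $x\mapsto \bar{B}_x$ produces an equation of the form
\[
\mathcal{B}_{x_*}(\bar{x}-\bar{x}_*) + \bar{B}_{x_*}\bigl(\lambda(x)-\lambda(x_*)\bigr) = \tfrac{1}{t}\,\bar{v}(x) + r(x),
\]
with $\|r(x)\|_{\F}=o(\|x-x_*\|_{\F})$. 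Projecting onto $\mathfrak{L}_{x_*}=\ker\bar{B}_{x_*}^{T}$ eliminates $\lambda$, and Assumption~\ref{as04} (with Lemma~\ref{le04} giving a uniform lower bound in a neighborhood) makes the projected operator boundedly invertible on $\mathfrak{L}_{x_*}$; this bounds the part of $\bar{x}-\bar{x}_*$ lying in $\mathfrak{L}_{x_*}$ by a multiple of $\|\bar{v}(x)\|_{\F}$. The remaining component of $\bar{x}-\bar{x}_*$, lying in $\sspan(\bar{B}_{x_*})$, is controlled by $\|\hat{x}\|_{\F}=\|\hat{v}(x)\|_{\F}$ via the manifold constraint, in direct analogy with the estimate~\eqref{eq:41} used in the proof of Lemma~\ref{le12}.

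The main obstacle is this implicit-function/linearization step: one has to track how $\lambda(x)$ is determined by $B_x^{T}v(x)=0$ so that it drops out under the projection onto $\mathfrak{L}_{x_*}$, and verify that the leading operator left over is genuinely $P_{\mathfrak{L}_{x_*}}\mathcal{B}_{x_*}P_{\mathfrak{L}_{x_*}}$ rather than a perturbed version. An equivalent but more compact route is to package these steps as CD-regularity of $v$ at $x_*$: the generalized Jacobian $J(x_*)$ in~\eqref{eq:Jsol} is nonsingular in view of the reformulation~\eqref{eq:10}, Corollary~\ref{co01}, and Assumption~\ref{as04}, from which the two-sided bound $\|v(x)\|_{\F}\asymp\|x-x_*\|_{\F}$ in a neighborhood of $x_*$ follows by a standard semismooth implicit function argument, and the constants $C_L,C_U$ are the Lipschitz constants of $v$ and of its local inverse.
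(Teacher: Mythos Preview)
Your compact route in the final paragraph is exactly what the paper does: it invokes the semismooth first-order expansion $v(x)-v(x_*)-J(x)(x-x_*)=o(\|x-x_*\|_{\F})$ from \cite[Theorem~3.9]{SAHJV2024}, notes that $J(x_*)$ is nonsingular by Assumption~\ref{as04} and \cite[Proposition~3.13]{SAHJV2024}, and reads off both inequalities from the continuity of $J(x)$ near $x_*$ together with $v(x_*)=0$.

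Your longer decomposition argument (split off $\hat{x}$ via $\hat{v}(x)=-\hat{x}$, subtract the KKT relations on the support, project onto $\mathfrak{L}_{x_*}$, and invert) is a legitimate alternative, but more laborious. The step you flag as the main obstacle is real: when you linearize $\bar{B}_x\lambda(x)-\bar{B}_{x_*}\lambda(x_*)$, the derivative of $\bar{B}_x$ is what supplies the Weingarten contribution $L_{x_*}^{(11)}$, so that the operator you are left with after projecting is $P_{\mathfrak{L}_{x_*}}\mathcal{B}_{x_*}P_{\mathfrak{L}_{x_*}}$ rather than $P_{\mathfrak{L}_{x_*}}H_{x_*}^{(11)}P_{\mathfrak{L}_{x_*}}$. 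That bookkeeping is precisely what the $J(x_*)$ formulation in~\eqref{eq:Jsol} has already absorbed, which is why the paper's three-line proof can skip it.
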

\begin{proof}
The proof relies on the analysis in~\cite[Theorem~3.9]{SAHJV2024}. By Assumption~\ref{as04} and \cite[Proposition~3.13]{SAHJV2024}, the matrix $J(x_*)$ defined as~\eqref{eq:Jsol} is nonsingular. Combining this with Assumptions~\ref{as02} and~\ref{as03} yields that the analysis in~\cite[Theorem~3.9]{SAHJV2024} hold. It follows that there exists a neighborhood of $x_*$, denoted by $\tilde{\mathcal{V}}_4$, such that
\begin{align} \label{eq:37}
	v(x) - v(x_*) - J(x) (x - x_*) = \smallO(\|x - x_*\|_{\F})
\end{align}
holds for all $x \in \tilde{\mathcal{V}}_4$, where $\smallO(t) \in \mathbb{R}^n$ denotes a higher order of $t$ as $t \rightarrow 0$ and $J(x)$ varies continuously with $x \in \tilde{\mathcal{V}}_2$. The final result follows from~\eqref{eq:37}, the continuity of $J(x)$ for $x \in \tilde{\mathcal{V}}_2$, and the nonsingularity of $J(x_*)$.

\end{proof}

Lemma~\ref{le14} gives a new interpretation of the direction $u(x)$ when $x$ is close to a minimizer $x_*$. In~\cite[Section~5.1]{SAHJV2024}, it is shown that a naive generalization of the Euclidean proximal Newton method does not converge superlinearly. Lemma~\ref{le14} implies that one can use ManPG to identify the locations of zero entries and then fix those entries in $v(x)$. The superlinear search direction is therefore computed by~\eqref{eq:56}. Note that the second-order term in~\eqref{eq:56} is not equal to $\Hess f(x)$ in general since the Lagrange multiplier $\lambda$ in the Weingarten map may not be equal to that in $\Hess f(x)$. However, the left upper block matrix $\mathcal{B}_x$ of $\mathfrak{B}_x$ is close to the Hessian of $f$ over the submanifold of $\mathcal{M}$, defined by $\mathcal{M}_{\mathrm{sub}} = \{x \in \Omega_{x_*} \subset \mathcal{M}: \hat{x} = 0\}$, where $\Omega_{x_*}$ is a sufficiently small neighborhood of $x_*$, see details in the proofs of~\cite[Proposition~3.13]{SAHJV2024}. It follows that $w(x)$ can be viewed as a Newton direction of $f$ over the manifold $\mathcal{M}_{\mathrm{sub}}$. 

Lemma~\ref{le14} is used in Lemma~\ref{le15}.

\begin{lemma} \label{le14}
Suppose Assumptions~\ref{as01}, \ref{as02}, \ref{as03} and~\ref{as04} hold. Then there exists a neighborhood of $x_*$, denoted by $\mathcal{V}_5$, such that
\begin{equation} \label{eq:56}
u(x) = d(x) = \argmin_{u \in \T_x \mathcal{M}, \hat{u} = \hat{v}(x)} G_x(u) = \frac{1}{2} \inner[]{u}{ \mathfrak{B}_x u} + \nabla f(x)^T u + \frac{\tau}{2} \|\hat{u}\|_{\F}^2 + \mu \|x + u\|_1
\end{equation}
holds for any $x \in \mathcal{V}_5$, where $d(x)$ is obtained by assuming~\eqref{eq:09} is solved exactly.
\end{lemma}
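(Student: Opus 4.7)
The plan is to combine Corollary~\ref{co01} with Lemma~\ref{le04} to get $u(x)=d(x)$ in a neighborhood of $x_*$, and then to verify that $d(x)$ is the KKT point (and in fact the unique minimizer) of the constrained problem in~\eqref{eq:56}. Concretely, Lemma~\ref{le13} gives a neighborhood on which $\bar{B}_x$ is smooth and full column rank, and Lemma~\ref{le04} gives a neighborhood on which $\mathcal{B}_x$ is positive definite on $\mathfrak{L}_x$; intersecting these two with $\mathcal{U}_{x_*}$ from Assumption~\ref{as02} gives a candidate neighborhood $\mathcal{V}_5$ on which Corollary~\ref{co01} applies, so $u(x)=d(x)$ and $w(x)=\argmin_{\bar{B}_x^T w = 0}\ell_{x,t}^T w + \tfrac12 w^T \mathcal{B}_x w$.

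Next I would write down the KKT conditions of the right-hand side of~\eqref{eq:56}. The constraint $u\in\T_x\mathcal{M}$ with $\hat{u}=\hat{v}(x)$ forces $\bar{B}_x^T\bar{u} = -\hat{B}_x^T\hat{v}(x)$; note that $\bar{d}(x)=\bar{v}(x)+w(x)$ satisfies this since $v(x)\in\T_x\mathcal{M}$ and $\bar{B}_x^T w(x)=0$. Differentiating the smooth part of $G_x$ with respect to $\bar{u}$ gives $\mathcal{B}_x\bar{u} + \mathfrak{B}_x^{(12)}\hat{v}(x) + \overline{\nabla f(x)}$, and the only remaining nonsmooth piece is $\mu\|\bar{x}+\bar{u}\|_1$ (the hat-part contribution $\mu\|\hat{x}+\hat{v}(x)\|_1=0$ vanishes by Assumption~\ref{as02}). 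After projecting the optimality system by $I_{j}-\bar{B}_x\bar{B}_x^\dagger$ to eliminate the Lagrange multiplier of $\bar{B}_x^T\bar{u}=-\hat{B}_x^T\hat{v}(x)$, the KKT condition becomes
\[
(I_{j}-\bar{B}_x\bar{B}_x^\dagger)\left(\mathcal{B}_x\bar{u}+\mathfrak{B}_x^{(12)}\hat{v}(x)+\overline{\nabla f(x)}+\mu s\right)=0
\]
for some $s\in\partial\|\bar{x}+\bar{u}\|_1$.

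The heart of the proof is then to substitute $\bar{u}=\bar{d}(x)=\bar{v}(x)+w(x)$ and use the KKT conditions of the proximal subproblem defining $v(x)$. From~\eqref{3-1} and Assumption~\ref{as02} one has $\overline{\nabla f(x)} + \tfrac{1}{t}\bar{v}(x)+\bar{B}_x\lambda(x)+\mu\,\mathrm{sign}(\bar{x}+\bar{v}(x))=0$, where $\mathrm{sign}(\bar{x}+\bar{v}(x))$ is well defined because $\bar{x}+\bar{v}(x)\neq 0$. Plugging this in and using $(I_{j}-\bar{B}_x\bar{B}_x^\dagger)\bar{B}_x=0$, the projected condition collapses (after taking $s=\mathrm{sign}(\bar{x}+\bar{v}(x))$) to $(I_{j}-\bar{B}_x\bar{B}_x^\dagger)(\mathcal{B}_x w(x)+\ell_{x,t})=0$, which is exactly the first-order optimality of~\eqref{eq:09} satisfied by $w(x)$. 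Finally, by Lemma~\ref{le04} the quadratic form in the objective is strictly convex on the affine feasible set (the recession directions satisfy $\bar{B}_x^T\delta\bar{u}=0$, i.e.\ $\delta\bar{u}\in\mathfrak{L}_x$), so the KKT point is the unique minimizer.

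The main obstacle is the subdifferential choice $s=\mathrm{sign}(\bar{x}+\bar{v}(x))$: this is only a valid element of $\partial\|\bar{x}+\bar{d}(x)\|_1$ if $\mathrm{sign}(\bar{x}+\bar{d}(x))=\mathrm{sign}(\bar{x}+\bar{v}(x))$ componentwise. To secure this I would argue by continuity: Assumption~\ref{as02} guarantees $\bar{x}+\bar{v}(x)$ is bounded away from zero componentwise on a neighborhood of $x_*$ (using compactness and continuity of $v$), while the explicit formula $w(x)=-[I_{j}+(I_{j}-\bar{B}_x\bar{B}_x^\dagger)(-I_{j}+t\mathcal{B}_x)]^{-1}(I_{j}-\bar{B}_x\bar{B}_x^\dagger)\ell_{x,t}$ from Lemma~\ref{le01} combined with $v(x_*)=0$ (Proposition~\ref{p01}) and continuity of $v$ forces $\|w(x)\|_{\F}\to 0$ as $x\to x_*$. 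Shrinking the neighborhood once more so that $\|w(x)\|_\infty$ is smaller than the componentwise gap of $\bar{x}+\bar{v}(x)$ to zero then yields the sign equality, completing the argument.
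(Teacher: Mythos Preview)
Your proposal is correct. The route is essentially dual to the paper's: the paper starts from the quadratic problem~\eqref{eq:09} and, via a chain of $\argmin$ reformulations (adding constants, the change of variables $\bar u=\bar v(x)+w$, and the identity $\overline{\nabla f(x)}+\mu\,\mathrm{sign}(\bar x_*)+\tfrac1t\bar v(x)=\bar B_x\lambda(x)$ coming from~\eqref{3-1}), rewrites it as the $G_x$-problem in~\eqref{eq:56}. You instead start from~\eqref{eq:56}, write down its KKT system, and verify directly that $\bar d(x)=\bar v(x)+w(x)$ satisfies it, using the very same identity from~\eqref{3-1} to collapse the projected stationarity condition to the optimality condition of~\eqref{eq:09}. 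Both approaches hinge on the same ``sign stability'' fact $\mathrm{sign}(\bar x+\bar d(x))=\mathrm{sign}(\bar x_*)$: the paper uses it to swap the linear surrogate $\mu\langle\bar u,\mathrm{sign}(\bar x_*)\rangle$ for $\mu\|x+u\|_1$ at the optimum, while you use it to certify that $s=\mathrm{sign}(\bar x+\bar v(x))$ is a valid element of $\partial\|\bar x+\bar d(x)\|_1$. Your treatment of this point (via $v(x_*)=0$, boundedness of the inverse in Lemma~\ref{le01}, and $\ell_{x,t}\to 0$) is in fact more explicit than the paper's, which cites only $\mathrm{sign}(\bar x+\bar v)=\mathrm{sign}(\bar x_*)$ and leaves the further step to $\bar u$ implicit. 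The KKT route also makes the uniqueness argument transparent (strict convexity of the reduced objective on the affine set parallel to $\mathfrak L_x$), whereas the paper's chain of equalities leaves uniqueness to the reader.
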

\begin{proof}
By Assumption~\ref{as02}, $x+v$ has the same support as $x_*$. Therefore, $\bar{x} + \bar{v}$ and $\bar{x}_*$ correspond to the same positions in $x + v$ and $x_*$ respectively. Since any entry in $\bar{x}_*$ is nonzero and Lemma~\ref{le05} holds, there exists a neighborhood of $x_*$, $\tilde{\mathcal{V}}_5 \subseteq \mathcal{U}_{x_*}$, such that for any $x \in \tilde{\mathcal{V}}_5$, it holds that
\begin{equation} \label{eq:42}
\mathrm{sign}(\bar{x} + \bar{v}) = \mathrm{sign}(x_*).
\end{equation}
It follows that for any $x \in \tilde{\mathcal{V}}_5$, Equation~\eqref{3-1} yields
$
    \bar{x} + \bar{v} + t \mu \mathrm{sign}(\bar{x} + \bar{v}) = \bar{x} - t ( \overline{\nabla f(x)} + \bar{B}_x \lambda_x )
$
which gives
\begin{equation} \label{eq:45}
\overline{\nabla f(x)} + \mu \mathrm{sign}(\bar{x}_*) + \frac{1}{t} \bar{v} = \bar{B}_x \lambda_x.
\end{equation}
Consider Problem~\eqref{eq:09}. We have
\begin{align}
\argmin_{w \in \mathfrak{L}_x}& \inner[]{\ell_{x}}{w} + \frac{1}{2} \inner[]{w}{\mathcal{B}_x w} \nonumber \\
= \argmin_{w \in \mathfrak{L}_x}& \inner[]{ \frac{1}{t} ( -I_{j} + t \mathcal{B}_{x} ) \bar{v}(x) + \mathfrak{B}_x^{(12)} \hat{v}(x) }{w} + \frac{1}{2} \inner[]{w}{\mathcal{B}_x w} \nonumber \\
= \argmin_{w \in \mathfrak{L}_x}& \frac{1}{2} \inner[]{w}{\mathcal{B}_x w} + \inner[]{w}{\mathcal{B}_x \bar{v}(x)} + \inner[]{w}{ - \frac{1}{t} \bar{v}(x) + \mathfrak{B}_x^{(12)} \hat{v}(x) } \nonumber \\
= \argmin_{w \in \mathfrak{L}_x}& \frac{1}{2} \inner[]{w}{\mathcal{B}_x w} + \inner[]{w}{\mathcal{B}_x \bar{v}(x)} + \inner[]{w}{ \overline{\nabla f(x)} + \mu \mathrm{sign}(\bar{x}_*) + \mathfrak{B}_x^{(12)} \hat{v}(x) } \nonumber \\
& \qquad \qquad \qquad \qquad \qquad\qquad \qquad \qquad \qquad  \hbox{ (by~\eqref{eq:45} and $w \in \mathfrak{L}_x$)} \nonumber \\
= \argmin_{w \in \mathfrak{L}_x}& \frac{1}{2} \inner[]{\bar{v}(x) + w}{\mathcal{B}_x (\bar{v}(x) + w)} + \inner[]{\bar{v}(x) + w}{ \overline{\nabla f(x)} + \mu \mathrm{sign}(\bar{x}_*) + \mathfrak{B}_x^{(12)} \hat{v}(x) } \nonumber \\
=& \left( \argmin_{\bar{u} - \bar{v}(x) \in \mathfrak{L}_x} \frac{1}{2} \inner[]{\bar{u}}{\mathcal{B}_x \bar{u}} + \inner[]{\bar{u}}{ \overline{\nabla f(x)} + \mu \mathrm{sign}(\bar{x}_*) + \mathfrak{B}_x^{(12)} \hat{v}(x) } \right) - \bar{v}(x), \label{eq:46}
\end{align}
where the second to the last equation follows from that adding a constant to the objective function does not change the minimizer and the last equation follows from letting $\bar{u} = \bar{v}(x) + w$. It follows from~\eqref{eq:46} that
\begin{align}
&u(x) = \argmin_{u \in \T_x \mathcal{M}, \hat{u} = \hat{v}(x)}  \frac{1}{2} \inner[]{\bar{u}}{\mathcal{B}_x \bar{u}} + \inner[]{\bar{u}}{ \overline{\nabla f(x)} + \mu \mathrm{sign}(\bar{x}_*) + \mathfrak{B}_x^{(12)} \hat{v}(x) } \nonumber \\
=& \argmin_{u \in \T_x \mathcal{M}, \hat{u} = \hat{v}(x)} \frac{1}{2} \inner[]{\begin{pmatrix} \bar{u} \\ \hat{u} \end{pmatrix}}{ \begin{pmatrix} \mathcal{B}_x & \mathfrak{B}_x^{(12)} \\ \mathfrak{B}_x^{(21)} & 0_{(n-j) \times (n-j)} \end{pmatrix} \begin{pmatrix} \bar{u} \\ \hat{u} \end{pmatrix}} + \inner[]{\bar{u}}{ \overline{\nabla f(x)} + \mu \mathrm{sign}(\bar{x}_*) } \nonumber \\
=& \argmin_{u \in \T_x \mathcal{M}, \hat{u} = \hat{v}(x)} \frac{1}{2} \inner[]{u}{ \mathfrak{B}_x u} + \inner[]{\bar{u}}{ \overline{\nabla f(x)} + \mu \mathrm{sign}(\bar{x}_*) } \nonumber \\
=& \argmin_{u \in \T_x \mathcal{M}, \hat{u} = \hat{v}(x)} \frac{1}{2} \inner[]{u}{ \mathfrak{B}_x u} + \inner[]{\bar{u}}{ \overline{\nabla f(x)}} + \mu \inner[]{\bar{u}}{\mathrm{sign}(\bar{x} + \bar{u}) } \qquad \hbox{ (by~\eqref{eq:42})} \nonumber \\
=& \argmin_{u \in \T_x \mathcal{M}, \hat{u} = \hat{v}(x)} \frac{1}{2} \inner[]{u}{ \mathfrak{B}_x u} + \nabla f(x)^T u + \frac{\tau}{2} \|\hat{u}\|_{\F}^2 + \mu \|x + u\|_1, \nonumber
\end{align}
where the third and the last equations follow from that adding a constant to the objective function does not change the minimizer.
\end{proof}

Lemmas~\ref{le15} and~\ref{le16} show that Algorithm~\ref{alg:tCG} does not terminate due to Step~\ref{a:tCG:st06} and the second condition in Step~\ref{a:tCG:st03} if $x$ is close to $x_*$. 

\begin{lemma} \label{le15}
Suppose Assumptions~\ref{as01}, \ref{as02}, \ref{as03} and~\ref{as04} hold. Then it holds that  %there exists a neighborhood of $x_*$, denoted by $\mathcal{V}_6$, such that
\[
G_x(d_{i+1}) \leq G_x(v(x))
\]
for any $x \in \mathcal{V}_5$ and any $i$, where $\mathcal{V}_5$ is defined in Lemma~\ref{le14}.
\end{lemma}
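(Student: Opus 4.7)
The plan is to reduce $G_x(d_{i+1}) - G_x(v(x))$ to the quadratic $\Phi_x(w) := \inner[]{w}{\ell_{x,t}} + \frac{1}{2}\inner[]{w}{\mathcal{B}_x w}$ that tCG is actually minimizing, and then to show that on a sufficiently small neighborhood of $x_*$ the leftover $\ell_1$ discrepancy vanishes, so Step~\ref{a:tCG:st01}'s CG descent transfers directly to $G_x$.

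\emph{Step 1 (algebraic reduction).} Since $x \in \mathcal{V}_5 \subseteq \mathcal{U}_{x_*}$, Assumption~\ref{as02} gives $\hat{x}+\hat{v}(x) = 0$, so the $\tau$-regularization and the $\hat{v}$-only blocks of the quadratic cancel between $d_{i+1}$ and $v(x)$. Substituting the KKT identity $\overline{\nabla f(x)} = -\mu\sign(\bar{x}_*) - \frac{1}{t}\bar{v}(x) + \bar{B}_x\lambda_x$ from~\eqref{eq:45}, using $w_{i+1}^\top \bar{B}_x = 0$, and applying~\eqref{eq:42}, a direct expansion collapses to
\[
G_x(d_{i+1}) - G_x(v(x)) = \Phi_x(w_{i+1}) + \mu\,\Delta_{i+1},
\]
where $\Delta_{i+1} := \|\bar{x}+\bar{v}(x)+w_{i+1}\|_1 - \sign(\bar{x}_*)^\top(\bar{x}+\bar{v}(x)+w_{i+1})$ is nonnegative by the subgradient inequality for $\|\cdot\|_1$.

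\emph{Step 2 (CG descent).} Algorithm~\ref{alg:tCG} is exactly the linear CG method for $\min_{w \in \mathfrak{L}_x}\Phi_x(w)$ starting from $w_0 = 0$. After ensuring $\mathcal{V}_5 \subseteq \mathcal{V}_2$, Lemma~\ref{le04} makes $\mathcal{B}_x$ positive definite on $\mathfrak{L}_x$ with smallest eigenvalue $\chi_\epsilon$, and choosing $\vartheta < \chi_\epsilon$ rules out the curvature termination in Step~\ref{a:tCG:st04}. Standard CG monotonicity then yields $\Phi_x(w_{i+1}) \leq \Phi_x(w_i) \leq \cdots \leq \Phi_x(0) = 0$ whenever $w_{i+1}$ is produced.

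\emph{Step 3 (sign match and conclusion).} Let $\delta := \min_m |(\bar{x}_*)_m| > 0$, which is strictly positive since $\bar{x}_*$ indexes the support of $x_*$. The classical CG error estimate $\|w_{i+1}\| \leq (1+\sqrt{\kappa(x)})\,\|w_*(x)\|$, where $w_*(x)$ is the exact minimizer of $\Phi_x$ on $\mathfrak{L}_x$ and $\kappa(x)$ is the condition number of $\mathcal{B}_x|_{\mathfrak{L}_x}$, combined with $\|w_*(x)\| \leq \|\ell_{x,t}\|/\chi_\epsilon$ and the uniform bounds on $\kappa(x)$ supplied by Lemma~\ref{le04} and continuity of $\mathcal{B}_x$, produces a constant $C$ independent of $x \in \mathcal{V}_5$ and of $i$ with $\|w_{i+1}\| \leq C\,\|\ell_{x,t}\|$. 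Because $v(x_*) = 0$ by Proposition~\ref{p01}, $\ell_{x_*,t} = 0$, and by continuity $\|w_{i+1}\| \to 0$ as $x \to x_*$; the same applies to $\bar{v}(x)$ and $\bar{x}-\bar{x}_*$. Shrinking $\mathcal{V}_5$ so that $\|\bar{x}+\bar{v}(x)+w_{i+1} - \bar{x}_*\|_\infty < \delta/2$ uniformly in $i$ forces each entry of $\bar{x}+\bar{v}(x)+w_{i+1}$ to share the sign of the corresponding entry of $\bar{x}_*$, so $\Delta_{i+1} = 0$. Combining with Step~2 gives $G_x(d_{i+1}) - G_x(v(x)) = \Phi_x(w_{i+1}) \leq 0$.

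The main obstacle is Step~3: one must bound every intermediate CG iterate (not only the exact minimizer) uniformly in $i$ and by a quantity that vanishes as $x \to x_*$, so that a single shrinkage of $\mathcal{V}_5$ controls all iterates simultaneously. Once this uniform iterate bound is in place, the remainder is routine algebra.
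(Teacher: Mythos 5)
Your proof is correct and, at a high level, follows the same route the paper takes: transfer CG monotonicity for the quadratic $\Phi_x$ to $G_x$. But you go substantially further than the paper in one important respect. The paper's own proof is three sentences long and hinges on the claim, carried over from the derivation of Lemma~\ref{le14}, that the quadratic in~\eqref{eq:09} and $G_x$ ``are only different up to a constant and a variable substitution.'' Expanding the difference exactly as you do in Step~1 shows that this is \emph{not} literally a constant: after using~\eqref{eq:45}, $\bar{B}_{x}^T w_{i+1} = 0$, and~\eqref{eq:42}, one gets
\[
G_x(d_{i+1}) - G_x(v(x)) = \Phi_x(w_{i+1}) + \mu\,\Delta_{i+1},
\qquad
\Delta_{i+1} = \|\bar{x}+\bar{v}(x)+w_{i+1}\|_1 - \sign(\bar{x}_*)^{\T}(\bar{x}+\bar{v}(x)+w_{i+1}),
\]
and $\Delta_{i+1}$ depends on $w_{i+1}$ unless $\sign(\bar{x}+\bar{v}(x)+w_{i+1})=\sign(\bar{x}_*)$. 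This sign condition is what the paper implicitly assumes (both here and in the last two lines of Lemma~\ref{le14}'s proof, where $\sign(\bar{x}_*)$ is silently replaced by $\sign(\bar{x}+\bar{u})$), but never verifies for the intermediate CG iterates. Your Step~3 closes exactly this gap: the subgradient inequality gives $\Delta_{i+1}\ge 0$ (so decrease of $\Phi_x$ alone is \emph{not} sufficient), and you force $\Delta_{i+1}=0$ by bounding every intermediate iterate $w_{i+1}$ uniformly via the monotone decrease of the $\mathcal{B}_x$-energy norm, $\|w_i-w_*\|_{\mathcal{B}_x}\le\|w_0-w_*\|_{\mathcal{B}_x}=\|w_*\|_{\mathcal{B}_x}$, translated to the Euclidean norm using the spectral bounds from Lemma~\ref{le04}, and then letting $x\to x_*$ (so $v(x)\to 0$ and $\ell_{x,t}\to 0$ by Lemma~\ref{le05}) to lock the sign pattern. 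Two minor technical remarks: the curvature check in Step~\ref{a:tCG:st04} must be ruled out by taking $\vartheta<\chi_\epsilon$, which you note, and ``shrinking $\mathcal{V}_5$'' is harmless because $\mathcal{V}_5$ enters later results only as ``sufficiently small.'' In short: correct, same skeleton, but your explicit decomposition and the uniform iterate bound supply a rigor that the published argument leaves implicit.
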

\begin{proof}
Since the truncated conjugate gradient method is used to optimize Problem~\eqref{eq:09} and the conjugate gradient method is a descent algorithm, it follows that the objective function in~\eqref{eq:09} decreases at every iteration of % is descent for the $w_i$ generated by 
Algorithm~\ref{alg:tCG}. By the derivation of Lemma~\ref{le14}, the objective function in~\eqref{eq:09} and $G_x$ are only different up to a constant and a variable substitution. Therefore, the value of $G_x$ also decreases. It follows that $G_x(d_{i+1}) \leq G_x(d_0) = G_x(v(x))$.
\end{proof}

\begin{lemma} \label{le16}
Suppose Assumptions~\ref{as01}, \ref{as02}, \ref{as03} and~\ref{as04} hold. If $t$ is sufficiently small, then it holds that %there exists a neighborhood of $x_*$, denoted by $\mathcal{V}_7$, such that
\[
G_x(v(x)) \leq G_x(0)
\]
holds for any $x \in \mathcal{M}$. Therefore, $G_x(d_{i+1}) \leq G_x(0)$ for any $x \in \mathcal{V}_5$ by Lemma~\ref{le15}.
\end{lemma}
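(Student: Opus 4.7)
The plan is to exploit the fact that $v(x)$ is the minimizer of the defining subproblem in Step~\ref{a:RNG:st01} of Algorithm~\ref{alg:IRPN}, which uses $\frac{1}{2t}\|v\|_{\F}^2$ as its quadratic term, while $G_x(v(x))$ involves the potentially smaller quadratic $\frac{1}{2}\inner[]{v}{\mathfrak{B}_x v} + \frac{\tau}{2}\|\hat{v}\|_{\F}^2$. When $t$ is sufficiently small, $\frac{1}{2t}$ dominates both $\frac{1}{2}\sigma_{\max}(\mathfrak{B}_x)$ and $\frac{\tau}{2}$, so the inequality should follow directly.

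Concretely, first I would use the optimality of $v(x)$ for its defining subproblem by comparing with the feasible point $0 \in \T_x\mathcal{M}$, yielding
\[
\inner[]{\nabla f(x)}{v(x)} + \frac{1}{2t}\|v(x)\|_{\F}^2 + h(x + v(x)) \leq h(x),
\]
which rearranges to $\inner[]{\nabla f(x)}{v(x)} + h(x+v(x)) - h(x) \leq -\frac{1}{2t}\|v(x)\|_{\F}^2$. Then I would write out the difference
\[
G_x(v(x)) - G_x(0) = \inner[]{\nabla f(x)}{v(x)} + \frac{1}{2}\inner[]{v(x)}{\mathfrak{B}_x v(x)} + \frac{\tau}{2}\|\hat{v}(x)\|_{\F}^2 + h(x+v(x)) - h(x)
\]
and substitute the bound above to obtain
\[
G_x(v(x)) - G_x(0) \leq \frac{1}{2}\inner[]{v(x)}{\mathfrak{B}_x v(x)} + \frac{\tau}{2}\|\hat{v}(x)\|_{\F}^2 - \frac{1}{2t}\|v(x)\|_{\F}^2.
\]

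Next I would use the compactness of $\mathcal{M}$ and the continuity of $\mathfrak{B}_x$ (guaranteed by Assumption~\ref{as01}, since $f$ is $C^2$ and $\mathcal{W}_x$ depends continuously on $x$) to obtain a finite constant $b := \max_{x \in \mathcal{M}} \sigma_{\max}(\mathfrak{B}_x)$. Then $\inner[]{v(x)}{\mathfrak{B}_x v(x)} \leq b\|v(x)\|_{\F}^2$ and $\|\hat{v}(x)\|_{\F}^2 \leq \|v(x)\|_{\F}^2$, so
\[
G_x(v(x)) - G_x(0) \leq \tfrac{1}{2}\bigl(b + \tau - \tfrac{1}{t}\bigr)\|v(x)\|_{\F}^2.
\]
Choosing $t \leq 1/(b+\tau)$ makes the right-hand side nonpositive, proving the claim; the final assertion $G_x(d_{i+1}) \leq G_x(0)$ for $x \in \mathcal{V}_5$ is then immediate by chaining with Lemma~\ref{le15}.

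There is no real obstacle here: the result is a direct consequence of the stronger quadratic penalty in the $v$-subproblem, and no appeal is needed to Assumptions~\ref{as02}, \ref{as03}, or~\ref{as04} beyond being in force. The only care required is to ensure that the bound on $\|\mathfrak{B}_x\|_2$ is uniform over $\mathcal{M}$, which compactness provides.
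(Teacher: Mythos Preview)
Your proposal is correct and follows essentially the same argument as the paper: use the optimality of $v(x)$ against $0$ in the subproblem, write out $G_x(v(x))-G_x(0)$, bound the quadratic terms by $\tfrac12(\|\mathfrak{B}_x\|_2+\tau)\|v(x)\|_{\F}^2$, and choose $t\le 1/(\tau+\max_{x\in\mathcal{M}}\|\mathfrak{B}_x\|_2)$. Your remark that Assumptions~\ref{as02}--\ref{as04} are not actually needed for the first inequality is accurate; they enter only via Lemma~\ref{le15} for the final sentence.
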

\begin{proof}
By the definition of $v(x)$, we have that
\begin{equation*} %\label{eq:48}
\inner[]{\nabla f(x)}{v(x)} + \frac{1}{2 t} \|v(x)\|_{\F}^2 + \mu \|x + v(x)\|_1 \leq \mu \|x\|_1.
\end{equation*}
We have
\begin{align}
&G_x(v(x)) - G_x(0) \nonumber \\
=& \frac{1}{2} \inner[]{v(x)}{ \mathfrak{B}_x v(x)} + \inner[]{\nabla f(x)}{v(x)} + \frac{\tau}{2} \|\hat{v}(x)\|_{\F}^2 + \mu \|x + v(x)\|_1 - \mu \|x\|_1 \nonumber\\
\leq& \frac{1}{2} \inner[]{v(x)}{ \mathfrak{B}_x v(x)} + \frac{\tau}{2} \|\hat{v}(x)\|_{\F}^2 - \frac{1}{2 t} \|v(x)\|_{\F}^2 
\leq \frac{1}{2} \left( \| \mathfrak{B}_x \|_2 + \tau - \frac{1}{t} \right) \|v(x)\|_{\F}^2 
\label{eq:49}
\end{align}
Therefore, if $t \leq \frac{1}{\tau + \max_{x \in \mathcal{M}} \|\mathfrak{B}_x\|_2}$, then the right term of~\eqref{eq:49} is negative, which completes the proof.
\end{proof}

By Lemmas~\ref{le04}, \ref{le12}, \ref{le15}, and~\ref{le16}, it is concluded that Algorithm~\ref{alg:tCG} only terminates due to the accurate condition in Step~\ref{a:tCG:st01} if $x$ is sufficiently close to $x_*$, $t$ is sufficiently small, and Assumptions~\ref{as01}, \ref{as02}, \ref{as03} and~\ref{as04} hold.

\subsubsection{Superlinear convergence analysis} \label{sec06}

If the step size one is used, then the accuracy termination condition in Algorithm~\ref{alg:tCG} ensures that the convergence rate of Algorithm~\ref{alg:IRPN} is superlinear.
\begin{theorem} \label{th02}
Suppose Assumptions~\ref{as01}, \ref{as02}, \ref{as03} and~\ref{as04} hold and $t$ is sufficiently small such that the result of Lemma~\ref{le16} holds. Then there exists a neighborhood of $x_*$, denoted by $\mathcal{V}_6$, such that if the step size one is used, then the convergence rate is $\min(1 + \theta, 2)$, i.e., $\|R_x(d(x)) - x_*\|_{\F} \leq C_{\mathrm{up}} \|x - x_*\|_{\F}^{\min(1 + \theta, 2)}$ holds for any $x \in \mathcal{V}_6$ and a constant $C_{\mathrm{up}} > 0$.
\end{theorem}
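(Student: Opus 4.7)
The plan is to decompose the error via the triangle inequality
\[
\|R_x(d(x)) - x_*\|_{\F} \leq \|R_x(d(x)) - R_x(u(x))\|_{\F} + \|R_x(u(x)) - x_*\|_{\F},
\]
where $u(x)$ is the exact proximal Newton direction defined by~\eqref{3-3}, and to bound the two terms separately: the first by the tCG accuracy together with smoothness of $R$, and the second by the known quadratic convergence of the exact RPN. Under Assumption~\ref{as04}, $\mathcal{B}_{x_*}$ is positive definite on $\mathfrak{L}_{x_*}$, which combined with Assumptions~\ref{as02} and~\ref{as03} implies the nonsingularity of the Jacobian $J(x_*)$ in~\eqref{eq:Jsol}. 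By the quadratic local convergence of the exact RPN from~\cite{SAHJV2024arxiv}, there exist a neighborhood $\mathcal{V}_6^{(1)}$ of $x_*$ and a constant $C_1 > 0$ with $\|R_x(u(x)) - x_*\|_{\F} \leq C_1 \|x - x_*\|_{\F}^2$ for all $x \in \mathcal{V}_6^{(1)}$.

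For the first term, Corollary~\ref{co01} gives $d(x) - u(x) = \begin{pmatrix} w(x) - w_*(x) \\ 0 \end{pmatrix}$, where $w_*(x)$ is the exact minimizer of~\eqref{eq:09} and $w(x)$ is the tCG output. Choose $\mathcal{V}_6^{(2)} \subseteq \mathcal{V}_2 \cap \mathcal{V}_3 \cap \mathcal{V}_4 \cap \mathcal{V}_5$ small enough that, by Lemmas~\ref{le04},~\ref{le12},~\ref{le15}, and~\ref{le16}, Algorithm~\ref{alg:tCG} terminates only through the accuracy condition in Step~\ref{a:tCG:st01}. Since $r_0 = P_x(\ell_{x,t})$ and $\ell_{x,t}$ depends linearly on $\bar{v}(x)$ and $\hat{v}(x)$, Lemma~\ref{le05} yields $\|r_0\|_{\F} \leq C_2 \|x - x_*\|_{\F}$. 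After shrinking $\mathcal{V}_6^{(2)}$ further so that $\|r_0\|_{\F}^{\theta} \leq \kappa$, tCG exits with status $'\mathrm{sup}'$, whence $\|r_i\|_{\F} \leq \|r_0\|_{\F}^{1+\theta}$. The exact minimizer of~\eqref{eq:09} satisfies $P_x \mathcal{B}_x w_*(x) + P_x \ell_{x,t} = 0$, while the final CG residual is $r_i = P_x(\ell_{x,t} + \mathcal{B}_x w(x))$; subtracting gives $r_i = P_x \mathcal{B}_x (w(x) - w_*(x))$ with $w(x) - w_*(x) \in \mathfrak{L}_x$. Using Lemma~\ref{le04}, the restriction of $P_x \mathcal{B}_x P_x$ to $\mathfrak{L}_x$ has smallest eigenvalue at least $\chi_{\epsilon} > 0$, so
\[
\|w(x) - w_*(x)\|_{\F} \leq \chi_{\epsilon}^{-1} \|r_i\|_{\F} \leq \chi_{\epsilon}^{-1} \|r_0\|_{\F}^{1+\theta} \leq C_3 \|x - x_*\|_{\F}^{1+\theta}.
\]
Finally, since $R$ is smooth on the tangent bundle with $R_x(0_x) = x$ and $\D R_x(0_x) = \id$, the compactness of $\mathcal{M}$ provides a uniform Lipschitz constant $C_R$ on a neighborhood of the zero section; as both $d(x)$ and $u(x)$ vanish as $x \to x_*$, we obtain $\|R_x(d(x)) - R_x(u(x))\|_{\F} \leq C_R \|d(x) - u(x)\|_{\F} \leq C_R C_3 \|x - x_*\|_{\F}^{1+\theta}$. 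Setting $\mathcal{V}_6 = \mathcal{V}_6^{(1)} \cap \mathcal{V}_6^{(2)}$ and combining the two bounds yields $\|R_x(d(x)) - x_*\|_{\F} \leq C_{\mathrm{up}} \|x - x_*\|_{\F}^{\min(1+\theta, 2)}$ with $C_{\mathrm{up}}$ depending on $C_1$, $C_R$, and $C_3$.

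The main obstacle is the step that converts the tCG residual bound $\|r_i\|_{\F} \leq \|r_0\|_{\F}^{1+\theta}$ into an $O(\|x - x_*\|_{\F}^{1+\theta})$ bound on $\|w(x) - w_*(x)\|_{\F}$: the operator $P_x \mathcal{B}_x P_x$ is positive definite \emph{only} on the feasible subspace $\mathfrak{L}_x$, so one must exploit that the error $w(x) - w_*(x)$ lies in $\mathfrak{L}_x$ and invoke the restricted spectral bound from Lemma~\ref{le04}. Additional bookkeeping is needed to intersect neighborhoods so that Lemmas~\ref{le04},~\ref{le05},~\ref{le12},~\ref{le15}, and~\ref{le16} all apply simultaneously, and to choose $t$ small enough for Lemma~\ref{le16}; once this is done, citing the quadratic rate for exact RPN and the tangent-space Lipschitzness of $R$ closes the argument routinely.
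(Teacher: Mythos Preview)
Your proposal is correct and follows essentially the same approach as the paper: both split the error into an exact-RPN part (quadratic by~\cite{SAHJV2024arxiv}) and a tCG-inexactness part, bound $\|r_0\|_{\F} = \|P_x \ell_{x,t}\|_{\F}$ via Lemma~\ref{le05}, invert the residual on $\mathfrak{L}_x$ using the uniform lower spectral bound from Lemma~\ref{le04}, and shrink the neighborhood so that $\|r_0\|_{\F}^\theta \leq \kappa$ forces the $'\mathrm{sup}'$ termination. The only cosmetic difference is where the retraction is handled: the paper first bounds $\|x + d(x) - x_*\|_{\F}$ in the ambient space and then adds the curvature term $\|R_x(d(x)) - x - d(x)\|_{\F} \leq C_{R_2}\|d(x)\|_{\F}^2$ via~\eqref{eq:27}, whereas you split at $R_x(u(x))$ and use a local Lipschitz bound for $R_x$; both routes are equivalent here.
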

\begin{proof}
Let $\tilde{\ell}_{x,t}$ denote $P_x(\ell_{x,t})$, $\tilde{\mathcal{B}}_x$ denote $P_x \mathcal{B}_x P_x$, and $\tilde{\mathcal{V}}_5$ denote $\cap_{i = 1}^5 \mathcal{V}_i$. Consider $x \in \tilde{\mathcal{V}}_5$, we have
\begin{align} \label{eq:57}
\|\tilde{\ell}_{x,t}\|_{\F} \leq \|\ell_{x,t}\|_{\F} \leq b_0 \|v(x)\|_{\F} \leq \frac{b_0}{C_L} \|x - x_*\|_{\F},
\end{align}
where $b_0 = \max( \max_{x \in \mathcal{M}} \|(-I_j + t \mathcal{B}_x) / t\|_2, \max_{x \in \mathcal{M}} \|\mathfrak{B}_x^{(12)}\|_2 )$ and the third inequality follows from Lemma~\ref{le05}. Therefore, it holds that for $x \in \tilde{\mathcal{V}}_5 \cap \tilde{\mathcal{V}}$,
\begin{align}
&\|x + d(x) - x_*\|_{\F} \leq \|x + u(x) - x_*\|_{\F} + \|u(x) - d(x)\|_{\F} \nonumber \\
\leq C_{\mathrm{sup}} \|x - x_*\|_{\F}^2& + \|w(x) - w_x^*\|_{\F} 
\leq C_{\mathrm{sup}} \|x - x_*\|_{\F}^2 + \|\tilde{\mathcal{B}}_x^{-1}\|_{2} \|\tilde{\mathcal{B}}_x w(x) - \tilde{\ell}_{x,t}\|_{\F} \nonumber \\
\leq C_{\mathrm{sup}} \|x - x_*\|_{\F}^2& + \|\tilde{\mathcal{B}}_x^{-1}\|_{2} \| \|\tilde{\ell}_{x,t}\|_{\F}^{(1 + \theta)} \leq C_{\mathrm{sup}} \|x - x_*\|_{\F}^2 + b_1 \|x - x_*\|_{\F}^{1 + \theta}\nonumber \\
\leq& \max( C_{\mathrm{sup}}, b_1 ) \|x - x_*\|_{\F}^{\min(1 + \theta, 2)}, \label{eq:58}
\end{align}
where $\tilde{\mathcal{V}}$ denotes a neighborhood of $x_*$ such that $u(x)$ is the quadratic convergence direction, i.e., the Riemannian proximal Newton direction~\cite{SAHJV2024arxiv}, $C_{\mathrm{sup}} > 0$ is a constant, $w_x^*$ denotes the exact solution of~\eqref{eq:09}, the third inequality uses $\tilde{\mathcal{B}}_x w_x^* = \tilde{\ell}_{x,t}$ and Lemma~\ref{le04}, the fourth inequality follows from Step~\ref{a:tCG:st01} in Algorithm~\ref{alg:tCG} with the assumption $\|\tilde{\ell}_{x,t}\|_{\F}^{\theta} \leq \kappa$, and the fifth inequality follows from $b_1 = \frac{b_0^{1 + \theta}}{C_L^{1 + \theta}} \max_{x \in \mathcal{V}_2} \|\tilde{\mathcal{B}}_x^{-1}\|_2$ and~\eqref{eq:57}. 
Note that the assumption $\|\tilde{\ell}_{x,t}\|_{\F}^{\theta} \leq \kappa$ can be made since $\kappa$ is a constant, $\|\tilde{\ell}_{x,t}\|_{\F} \leq \frac{b_0}{C_L} \|x - x_*\|_{\F}$ by~\eqref{eq:57}, and the neighborhood $\tilde{\mathcal{V}}$ is chosen sufficiently small.
Equation~\eqref{eq:58} yields that $\|d(x)\|_{\F} \leq (1 + \max(C_{\mathrm{sup}}, b_1 )) \|x - x_*\|_{\F}$ with $x \in \mathcal{V}_6:= \tilde{V}_5 \cap \tilde{\mathcal{V}} \cap \{x\in \M: \|x-x_*\|_{\F}<1\}$. It follows that
\begin{align*}
&\|R_x(d(x)) - x_*\|_{\F} \leq \|R_x(d(x)) - x - d(x)\|_{\F} + \|x + d(x) - x_*\|_{\F} \\
\leq& C_{R_2} \|d(x)\|_{\F}^2 + \max( C_{\mathrm{sup}}, b_1 ) \|x - x_*\|_{\F}^{\min(1 + \theta, 2)} \\
\leq& C_{R_2} (1 + \max(C_{\mathrm{sup}}, b_1 ))^2 \|x - x_*\|_{\F}^2 + \max( C_{\mathrm{sup}}, b_1 ) \|x - x_*\|_{\F}^{\min(1 + \theta, 2)},
\end{align*}
which implies the superlinear convergence rate on the order of $\min(1 + \theta, 2)$.
\end{proof}
However, the step size one with the proximal Newton direction $u(x)$ does not guarantee the function value $F$ to decrease, which is different from the Euclidean case and the smooth Riemannian case. Example~\ref{ex01} gives such an example. Note that this example relies on the nonsmoothness of the problem and the curvature in retraction.
\begin{example} \label{ex01}
Consider the function $F: \mathbb{R}^2\rightarrow \mathbb{R}: (x_1, x_2)^T \mapsto x_1^2 - 3 x_1 + 1 + x_2^2 + |x_1| + |x_2|$. Therefore, this objective function is in the form of~\eqref{eq:F} with $f(x) = x_1^2 - 3 x_1 + 1 + x_2^2$, $\mu = 1$, and $\mathcal{M} = \mathbb{R}^2$. It is easy to verify that the unique minimizer of $F$ is $x_* = (1, 0)^T$. Let $x = (1 + \epsilon, 0)^T$ with $|\epsilon| < 1$. Since $f(x)$ is a quadratic function, the proximal Newton direction $u(x)$ points to $x_*$ and therefore is given by $u(x) = x_* - x = (-\epsilon, 0)^T$. We use the retraction defined by $R:\T \mathcal{M} \rightarrow \mathcal{M}: \eta_x \mapsto x + \eta_x + \begin{pmatrix} 0 \\ 2 \eta_x^T \eta_x \end{pmatrix}$. It follows that
\begin{align}
&F(R_x(u(x))) - F(x) = F(1, 2 \epsilon^2) - F(1 + \epsilon, 0) \nonumber \\
=& 4 \epsilon^4 + |2 \epsilon^2| - ( (1 + \epsilon)^2 - 3(1+\epsilon) + 1 + |1 + \epsilon|)
= 4 \epsilon^4 + \epsilon^2 > 0. \label{eq:68}
\end{align}
Since $\epsilon$ can be arbitrarily small, Inequality~\eqref{eq:68} implies that there does not exist a neighborhood of $x_*$ such that for any $x$ in the neighborhood, the proximal Newton direction $u(x)$ with step size one ensures a decrease in the objective function. However, if the retraction $R_x(\eta_x) = x + \eta_x$ is used, i.e., zero curvature, then it has been proven in~\cite{LSS2014} that the step size one can be used and convergence rate is quadratic. 
\end{example}
Although using step size one in an iteration can not guarantee descent, using step size one for two consecutive iterations near $x_*$ can, as proved in Proposition~\ref{p02}.
\begin{proposition} \label{p02}
Suppose that Assumptions~\ref{as01}, \ref{as02}, \ref{as03} and~\ref{as06} hold and that there exists a neighborhood of $x_*$, denoted by $\mathcal{V}_9$, such that for any $x \in \mathcal{V}_9$, it holds that $\|R_x(d(x)) - x_*\|_{\F} \leq C_{\mathrm{up}} \|x - x_*\|_{\F}^{\varkappa}$ for a $\varkappa > \sqrt{2}$ and $R_x(d(x)) \in \mathcal{V}_9$. Then there exists a neighborhood of $x_*$, denoted by $\mathcal{V}_{10}$, and a constant $\rho_1> 0$ such that for any $x \in \mathcal{V}_{10}$, it holds that 
\[
F(x_{++}) \leq F(x) - \rho_1 \|v(x)\|_{\F}^2,
\]
where $x_+ = R_x(d(x))$ and $x_{++} = R_{x_+}(d(x_+))$.
\end{proposition}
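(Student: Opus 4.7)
The plan is to decompose $F(x_{++}) - F(x) = [F(x_{++}) - F(x_*)] - [F(x) - F(x_*)]$ and to exploit the mismatch between a strictly quadratic lower bound on $F(y)-F(x_*)$ coming from geodesic strong convexity and an at-most-linear upper bound on the same quantity coming from Lipschitz continuity. The key input is that applying the contraction hypothesis twice gives $\|x_{++} - x_*\|_{\F} \leq C_{\mathrm{up}}^{1+\varkappa}\|x - x_*\|_{\F}^{\varkappa^2}$, and since $\varkappa^2 > 2$ the upper bound on the first bracket is $o(\|x - x_*\|_{\F}^2)$ whereas the lower bound on the second is $\Theta(\|x - x_*\|_{\F}^2)$. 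Converting $\|x - x_*\|_{\F}^2$ into $\|v(x)\|_{\F}^2$ via Lemma~\ref{le05} then yields the claim.

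More concretely, first I would extract the quadratic lower bound from Assumption~\ref{as06}. Because $\Exp_{x_*}$ is a diffeomorphism at $0_{x_*}$ with $\D\Exp_{x_*}(0_{x_*}) = \id$, by shrinking the neighborhood we can arrange $\|\Exp_{x_*}^{-1}(y)\|_{\F} \geq c_1\|y - x_*\|_{\F}$ for some $c_1 > 0$, whence~\eqref{eq:59} gives
\[
F(y) - F(x_*) \geq \tfrac{\varsigma c_1^2}{2}\|y - x_*\|_{\F}^2.
\]
Second, since $f \in C^1$ has bounded gradient on the compact set $\mathcal{M}$ and $h(y)=\mu\|y\|_1$ is $\mu\sqrt{n}$-Lipschitz in the Frobenius norm, $F$ is globally Lipschitz on $\mathcal{M}$ with some constant $L_F$; combined with the iterated convergence bound (using the forward invariance of $\mathcal{V}_9$ to apply the hypothesis first to $x$ and then to $x_+$) this yields
\[
F(x_{++}) - F(x_*) \leq L_F\|x_{++} - x_*\|_{\F} \leq L_F C_{\mathrm{up}}^{1+\varkappa}\|x - x_*\|_{\F}^{\varkappa^2}.
\]

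Subtracting the two bounds,
\[
F(x_{++}) - F(x) \leq L_F C_{\mathrm{up}}^{1+\varkappa}\|x - x_*\|_{\F}^{\varkappa^2} - \tfrac{\varsigma c_1^2}{2}\|x - x_*\|_{\F}^2.
\]
Since $\varkappa^2 - 2 > 0$, by choosing $\mathcal{V}_{10}\subseteq \mathcal{V}_9$ small enough that $L_F C_{\mathrm{up}}^{1+\varkappa}\|x - x_*\|_{\F}^{\varkappa^2 - 2} \leq \varsigma c_1^2/4$ for every $x \in \mathcal{V}_{10}$, the right-hand side is at most $-\tfrac{\varsigma c_1^2}{4}\|x - x_*\|_{\F}^2$. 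Lemma~\ref{le05} then gives $\|x - x_*\|_{\F}^2 \geq C_L^2 \|v(x)\|_{\F}^2$, so the claim holds with $\rho_1 := \varsigma c_1^2 C_L^2/4$ (and $\rho_1$ can always be further shrunk to match any value already fixed elsewhere in the algorithm).

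The main obstacle is essentially careful bookkeeping of neighborhoods: $\mathcal{V}_{10}$ must lie simultaneously in $\mathcal{V}_9$ (so the two-step rate applies), in $\mathcal{V}_4$ (so Lemma~\ref{le05} is in force), in the neighborhood from Assumption~\ref{as06} on which strong convexity holds, and in the region where Euclidean and geodesic distances to $x_*$ are comparable. Conceptually, however, the whole argument is driven by the single inequality $\varkappa^2 > 2$, which is precisely what makes the linear-order overshoot of $F$ at $x_{++}$ arising from the nonsmooth term strictly dominated by the quadratic descent budget that strong convexity extracts at $x$.
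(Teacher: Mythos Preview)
Your proposal is correct and follows essentially the same approach as the paper: decompose $F(x_{++})-F(x)$ through $F(x_*)$, bound $F(x_{++})-F(x_*)$ linearly via Lipschitz continuity and the iterated contraction, bound $F(x)-F(x_*)$ quadratically via geodesic strong convexity together with the local equivalence of $\|\Exp_{x_*}^{-1}(x)\|_{\F}$ and $\|x-x_*\|_{\F}$, absorb the higher-order term using $\varkappa^2>2$, and finish with Lemma~\ref{le05}. The paper obtains your constant $c_1$ explicitly as $1/2$ from~\eqref{eq:27} applied to the exponential map, but otherwise the argument and the final constant $\rho_1$ match up to this choice.
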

\begin{proof}
By Assumption~\ref{as01}, $F$ is a Lipschitz continuous function. Let $L_F$ denote its Lipschitz constant. It follows that for any $x \in \mathcal{V}_9$, it holds that
\begin{align} \label{eq:64}
F(x_{++}) - F(x_*) \leq L_F \|x_{++} - x_*\|_{\F} \leq L_F C_{\mathrm{up}} \|x_+ - x_*\|_{\F}^{\varkappa} \leq L_F C_{\mathrm{up}}^{1 + \varkappa} \|x - x_*\|_{\F}^{(\varkappa^2)}.
\end{align}
Using~\eqref{eq:27} for the exponential mapping yields
\begin{equation} \label{eq:63}
\|\Exp_{x_*}^{-1}(x)\|_{\F} \geq \|x - x_*\|_{\F} - C_{\Exp_2} \|\Exp_{x_*}^{-1}(x)\|_{\F}^2 \geq \frac{1}{2} \|x - x_*\|_{\F},
\end{equation}
where $C_{\Exp_2}$ is a constant and the second inequality assumes $x$ to be sufficiently close to $x_*$. By~\eqref{eq:63} and~\eqref{eq:59}, we have
\begin{equation} \label{eq:66}
F(x) \geq F(x_*) + \frac{\varsigma}{2} \|\Exp_{x_*}^{-1}(x)\|_{\F}^2 \geq F(x_*) + \frac{\varsigma}{8} \|x - x_*\|_{\F}^2.
\end{equation}
Since $\varkappa^2 > 2$, there exists a neighborhood of $x_*$, denoted by $\tilde{\mathcal{V}}_{10}$, such that 
\begin{equation} \label{eq:65}
L_F C_{\mathrm{up}}^{1 + \varkappa}  \|x - x_*\|_{\F}^{(\varkappa^2)} \leq \frac{\varsigma}{16} \|x - x_*\|^2.
\end{equation}
Using~\eqref{eq:64}, \eqref{eq:66} and~\eqref{eq:65} yields
\[
F(x_{++}) \leq F(x_*) + L_F C_{\mathrm{up}}^{1 + \varkappa} \|x - x_*\|_{\F}^{(\varkappa^2)} \leq F(x) - \frac{\varsigma}{16} \|x - x_*\|_{\F}^2.
\]
By Lemma~\ref{le05}, we have $F(x_{++}) \leq F(x) - \frac{\varsigma C_L^2}{16} \|v(x)\|_{\F}^2$ for any $x$ in a sufficiently small neighborhood of $x_*$.
\end{proof}
Finally, the global and local superlinear convergence of Algorithm~\ref{alg:IRPN} follows from Theorem~\ref{th03}, Theorem~\ref{th02}, and Proposition~\ref{p02}, and stated in Theorem~\ref{th04}.
\begin{theorem} \label{th04}
Suppose that Assumptions~\ref{as01}, \ref{as02}, \ref{as03} and~\ref{as06} hold and that $t_{\min}$ is sufficiently small such that the results of Lemmas~\ref{le11} and~\ref{le16} hold. Let $\{x_k\}$ denote the sequence generated by Algorithm~\ref{alg:IRPN}. 
Then $\liminf_{k \rightarrow \infty} \|v(x_k)\|_{\F} = 0$ hold. Moreover, there exists a neighborhood of $x_*$, denoted by $\mathcal{V}_{10}$, such that if an iterate $x_k$ is in $\mathcal{V}_{10}$, then the sequence $\{x_k\}$ generated by Algorithm~\ref{alg:IRPN} converges to $x_*$ and the local convergence rate is $\min(1+\theta, 2)$.
\end{theorem}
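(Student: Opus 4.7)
The plan is to obtain the theorem by combining the three main pillars already established in this section: the global convergence in Theorem~\ref{th03}, the local superlinear estimate for the step-size-one iterate in Theorem~\ref{th02}, and the two-step sufficient-descent result in Proposition~\ref{p02}. The first conclusion $\liminf_{k \to \infty} \|v(x_k)\|_{\F} = 0$ is essentially immediate: Assumption~\ref{as06} implies Assumption~\ref{as04} by Lemma~\ref{le06}, so Assumptions~\ref{as01}, \ref{as02}, \ref{as03}, \ref{as04} all hold, and the hypothesis on $t_{\min}$ is exactly what Theorem~\ref{th03} requires.

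For the local part, I would first choose a neighborhood $\mathcal{V}_{10}$ small enough to be contained in the intersection of all the local neighborhoods produced earlier, namely $\mathcal{V}_1 \cap \mathcal{V}_2 \cap \mathcal{V}_3 \cap \mathcal{V}_4 \cap \mathcal{V}_5 \cap \mathcal{V}_6 \cap \mathcal{V}_9$, and further shrink it so that (i)~$\|r_0\|_{\F}^{\theta} \leq \kappa$ holds inside $\mathcal{V}_{10}$ using the bound~\eqref{eq:57} together with Lemma~\ref{le05}, and (ii)~the superlinear estimate of Theorem~\ref{th02} applied twice keeps an iterate inside the neighborhood, i.e.\ $C_{\mathrm{up}} \|x-x_*\|_{\F}^{\min(1+\theta,2)-1} < 1/2$. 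In this neighborhood, Lemmas~\ref{le04},~\ref{le12},~\ref{le15}, and~\ref{le16} guarantee that none of the early termination branches of Algorithm~\ref{alg:tCG} (Steps~\ref{a:tCG:st06},~\ref{a:tCG:st02},~\ref{a:tCG:st04}, and~\ref{a:tCG:st03}) can trigger, so tCG must exit through Step~\ref{a:tCG:st01}, and by the choice $\|r_0\|_{\F}^{\theta} \leq \kappa$ it returns $\mathrm{status} = \text{'sup'}$.

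With $\mathrm{status} = \text{'sup'}$, Algorithm~\ref{alg:IRPN} bypasses the backtracking branch and enters the flag-counted branch that applies step size one. On the first such step ($\mathrm{flag}=1$), $x_{k+1} = R_{x_k}(d(x_k))$ is accepted unconditionally; on the next step ($\mathrm{flag}=2$), $x_{k+1}$ is again accepted unconditionally and the algorithm then checks the two-step descent condition~\eqref{eq:67}. By Theorem~\ref{th02}, the superlinear estimate $\|R_x(d(x)) - x_*\|_{\F} \leq C_{\mathrm{up}} \|x - x_*\|_{\F}^{\min(1+\theta,2)}$ holds, and since $\min(1+\theta,2) \geq \sqrt{2}$ only when $\theta \geq \sqrt{2}-1$, I would adjust the hypothesis (or reshuffle constants) so that Proposition~\ref{p02} applies; more precisely, by choosing the neighborhood so small that $\|x-x_*\|_{\F}^{\varkappa^2-2}$ is sufficiently small with $\varkappa = \min(1+\theta,2)$, Proposition~\ref{p02} gives $F(x_{k+1}) \leq F(x_{k-1}) - \rho_1 \|v(x_{k-1})\|_{\F}^2$, so the two-step descent test passes and the iterates are kept. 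Iterating this argument, the sequence remains inside $\mathcal{V}_{10}$ forever, always uses step size one, and Theorem~\ref{th02} yields $\|x_{k+1} - x_*\|_{\F} \leq C_{\mathrm{up}} \|x_k - x_*\|_{\F}^{\min(1+\theta,2)}$, hence the convergence rate is $\min(1+\theta,2)$.

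The main obstacle I anticipate is the coordinated bookkeeping of the flag mechanism: I must verify that once $\mathrm{status} = \text{'sup'}$ is produced, it will be produced at every subsequent iteration inside $\mathcal{V}_{10}$, and that the two-step descent check at $\mathrm{flag}=2$ never forces a backtracking at $x_{k-1}$ (which would otherwise reset the superlinear trajectory). This reduces to checking the compatibility of the hypothesis $\varkappa > \sqrt{2}$ in Proposition~\ref{p02} with the rate $\min(1+\theta,2)$ produced by Theorem~\ref{th02}; if $1+\theta \leq \sqrt{2}$, one can still close the argument by applying Theorem~\ref{th02} twice so that the effective composite rate exceeds $\sqrt{2}$, or by imposing a mild lower bound on $\theta$, and this is the only delicate point of the proof.
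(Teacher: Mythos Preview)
Your proposal follows essentially the same architecture as the paper's proof: global convergence from Theorem~\ref{th03}, then local superlinear convergence by combining Theorem~\ref{th02} with Proposition~\ref{p02} and tracing through the flag mechanism in Steps~\ref{a:RNG:st07}--\ref{a:RNG:st08}. Your treatment is in fact more explicit than the paper's (which is only three sentences).

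There is one piece you skip that the paper does mention. You invoke Lemma~\ref{le16} inside $\mathcal{V}_{10}$ as if it applied automatically, but Lemma~\ref{le16} requires $t$ to be sufficiently small, and the hypothesis only controls $t_{\min}$, not the current $t_k$ (which can range up to $t_{\max}$). The paper closes this by observing that whenever $G_x(v(x)) > G_x(0)$ the status is \texttt{'early1'}, which by Step~\ref{a:RNG:st17} forces $t_{k+1} = \max(\varpi_2 t_k, t_{\min})$; hence after finitely many iterations $t_k$ is driven below the threshold of Lemma~\ref{le16}, and once $\mathrm{status} = \text{'sup'}$ the parameter $t_k$ is no longer increased. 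You should add this observation before asserting that all early-termination branches are inactive.

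On the $\varkappa > \sqrt{2}$ point: you are right that this is the only genuinely delicate spot, and the paper's proof does not spell it out either. Your suggested fix of ``applying Theorem~\ref{th02} twice'' is actually the correct idea, but not via Proposition~\ref{p02} as stated; rather, one re-runs the \emph{proof} of Proposition~\ref{p02} with the two-step composite estimate $\|x_{++}-x_*\|_{\F} \leq C\|x-x_*\|_{\F}^{\varkappa^2}$, and all that proof uses is $\varkappa^2 > 2$, which holds for any $\varkappa = \min(1+\theta,2) > 1$ once compounded. Alternatively one simply assumes $\theta > \sqrt{2}-1$, which is harmless in practice.
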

\begin{proof}
The global convergence $\liminf_{k \rightarrow \infty} \|v(x_k)\|_{\F} = 0$ follows from Theorem~\ref{th03}. By Step~\ref{a:RNG:st17} of Algorithm~\ref{alg:IRPN}, Step~\ref{a:tCG:st06} of Algorithm~\ref{alg:tCG}, and Lemma~\ref{le16}, we have that $t$ is sufficiently small for Lemma~\ref{le16} when $k$ is sufficiently large. The local convergence rate of $\min(1+\theta, 2)$ follows from Theorem~\ref{th02}, Proposition~\ref{p02}, and Steps~\ref{a:RNG:st07} to~\ref{a:RNG:st08} of Algorithm~\ref{alg:IRPN}.
\end{proof}

\section{Numerical Experiments} \label{sec12}

In this section, we compare the proposed method with the existing proximal gradient-type methods, i.e., ManPG~\cite{CMSZ2019}, ManPG-Ada~\cite{CMSZ2019}, ManPQN~\cite{WY2023}, RPN-H~\cite[Algorithm~4.1]{SAHJV2024} and IAManPG~\cite{HWGV2022}. The tested problems, support estimation, parameter setting, and testing environment are given in Section~\ref{sec10}.  In Section~\ref{sec11}, we first combine RPN-CG with ManPG-Ada and propose a hybrid method named RPN-CGH. The robustness of the three superlinearly converging methods, RPN-H, RPN-CG, and RPN-CGH, are compared in Section~\ref{sec11}. 
Next, the efficiency of RPN-CG and RPN-CGH is compared to ManPG, ManPG-Ada, ManPQN, and RPN-H using the sparse principal component analysis (sparse PCA) and the compressed modes (CM) problems respectively in Section~\ref{sec:spca} and Section~\ref{sec:cm}. Finally, we combine RPN-CG with IAManPG and obtained a hybrid method also called RPN-CGH, the efficiency of RPN-CG and RPN-CGH is compared to ManPG and IAManPG using community detection problems in Section~\ref{subsec:cd}.

% In this section, we perform numerical experiments to evaluate the efficiency of our proposed RPN-CG method (Algorithm~\ref{alg:IRPN}) by comparing it with ManPG~\cite{CMSZ2019}, ManPG-Ada~\cite{CMSZ2019}, ManPQN~\cite{WY2023} and RPN-H~\cite[Algorithm~4.1]{SAHJV2024}. 

% Our test problems include sparse principle component analysis (sparse PCA) and compressed modes (CM) problem, detailed in Subsections~\ref{sec:spca} and~\ref{sec:cm}, respectively. All experiments are performed in MATLAB R2022b on a standard PC with 2.8 GHz CPU (Intel Core i7).

\subsection{Tested problems, support estimation, parameter setting, and testing environment} \label{sec10}

\paragraph{Tested problems:} We use problems from a sparse PCA model and discretized compressed modes for numerical tests. The sparse PCA model in the form of
\begin{equation}\label{eq:spca}
    \min_{X\in \mathrm{St}(r, n)} -\trace(X^{\T}A^{\T}AX) + \mu \|X\|_1
\end{equation}
is considered, where $A\in \R^{m\times n}$ is the data matrix. The formulation~\eqref{eq:spca} is a penalized version of the ScoTLASS model proposed by Jolliffe et al.~\cite{JoTrUd2003a} and has been used in~\cite{CMSZ2019,HuaWei2019,WY2023,SAHJV2024} for benchmarking Riemannian proximal gradient-type methods.

Two types of data matrices $A$ are used in this paper.
\begin{enumerate}
\item {\bf Random data.} The entries in the data matrix $A$ are drawn from the standard normal distribution~$\mathcal{N}(0, 1)$.
\item {\bf Synthetic data.}  We first repeat the five principal components (shown in Figure~\ref{fig:PCs})  $m/5$ times to obtain an $m$-by-$n$ noise-free matrix. Then the data matrix $A$ is created by further adding a random noise matrix, where each entry of the noise matrix is drawn from~$\mathcal{N}(0, 0.64)$. This synthetic data generation is inspired by the work in~\cite{SCLEE2018} and has been used in~\cite{HuaWei2019,SAHJV2024}.
\end{enumerate}
\begin{figure}[ht!]
\centering
\includegraphics[width=0.5\textwidth]{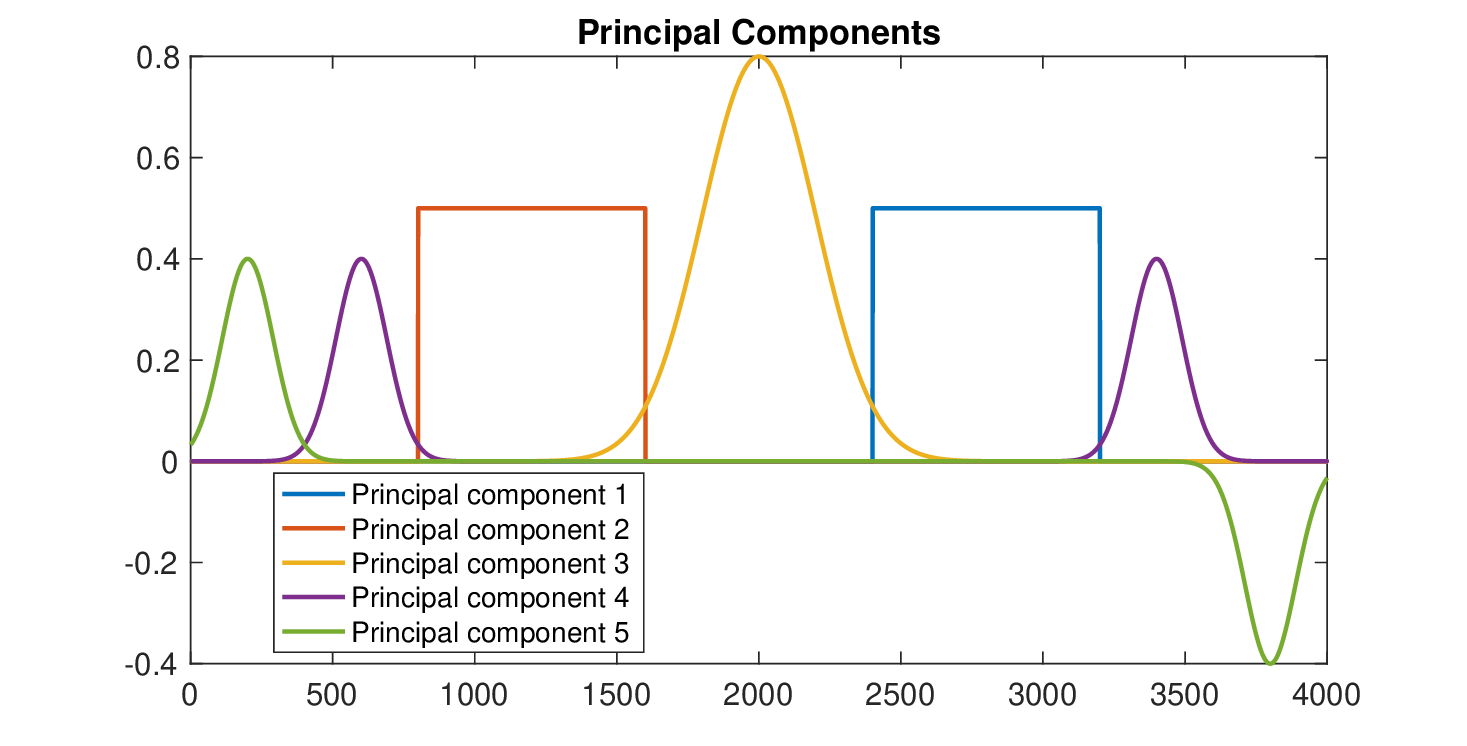}
\caption{
The five principal components used in the synthetic data.
}
\label{fig:PCs}
\end{figure}

The compressed modes (CM) problem aims to seek sparse solution of the independent-particle Schr{\"o}dinger equation. %Sparsity is achieved by adding an $L_1$ regularization if the wave function, which leads to solutions with compact support. 
After proper discretization, CM problem~\cite{OLCO2013} can be formulated as
\begin{equation}\label{eq:cm}
    \min_{X\in \mathrm{St}(r,n)} \trace(X^{\T}HX) + \mu \|X\|_1,
\end{equation}
where $H \in \R^{n\times n}$ denotes the discretized Schr{\"o}dinger operator.

The community detection (CD) problem is formulated as the optimization problem
\begin{equation}\label{eq:cd}
    \min_{X\in \mathcal{F}_{\mathbf{1}_n}} -\trace(X^{\T}M X) + \mu \|X\|_1,
\end{equation}
where $M \in \mathbb{R}^{n\times n}$ is the modularity matrix, $\mathcal{F}_{v} = \{ X\in \mathbb{R}^{n \times r}: X^{\T}X = \I_{r}, v \in \mathrm{span}(X)\} $ with $v = \mathbf{1}_n$ denotes the vector with all entries being one, $\mathrm{span}(X)$ denotes the column space of $X$. It follows from~\cite[Theorem~1]{HWGV2022} that $\mathcal{F}_v$, where $v\in \mathbb{R}^n$ is a vector with all entries being positive, is a compact embedded submanifold of $\mathbb{R}^{n\times r}$.

\paragraph{Support estimation:} In Section~\ref{sec04}, the operator $\bar{\cdot}$, $\hat{\cdot}$ and the partition of $\mathfrak{B}_x$ are based on the support of $x + v(x)$. In the implementation, the support of $x + v(x)$ is estimated by the following approach: if $(x + v(x))_i$ is nonzero and $|(x)_i| \geq \|v(x)\|_{\F}$, then $i$-th index of $x+v(x)$ is in the support, where $(\cdot)_i$ denotes the $i$-th entry of the argument. Therefore, the estimated support is a subset of the true support. Note that such a modification to Algorithm~\ref{alg:IRPN} does not influence its theoretical results since (i) the global convergence does not rely on the estimation of the support and (ii) if $x_k$ is sufficiently close to $x_*$ in the sense that $\|x_k - x_*\|_{\infty} < \frac{1}{2} \min\{|(x_*)_i| \mid (x_*)_i \neq 0 \}$ and $\|v(x_k)\|$ is sufficiently small in the sense that $\|v(x_k)\|_{\infty} < \frac{1}{2} \min\{|(x_*)_i| \mid (x_*)_i \neq 0 \}$, then the estimated support is equal to the true support. It follows that the local superlinear convergence analyses are still applicable.

\paragraph{Parameter setting:} The parameters of Algorithm~\ref{alg:IRPN} are set by $\alpha_{\mathrm{init}} = 1$, $\rho_1 = 0.001$, $\rho_2 = 0.5$, $\varpi_1 = 1.1$, $\varpi_2 = 0.9$, $\vartheta = 0.01$, $\gamma = 0.01$, $\tau = 100$, $\kappa = 0.1$ and $\theta = 0.5$. The parameters used in ManPG, ManPG-Ada, ManPQN, RPN-H and IAManPG are set to be the default values in the corresponding papers unless otherwise indicated, i.e.,~\cite{CMSZ2019} for ManPG and ManPG-Ada, \cite{WY2023} for ManPQN, ~\cite{SAHJV2024} for RPN-H, and~\cite{HWGV2022} for IAManPG. The values of $n$, $p$, $m$, and $\mu$ are specified later when reporting numerical results. 

%The value of the switching parameter $\epsilon$
% The parameters used in RPN-H are set to be the default value in~\cite{SAHJV2024}, where the dependency on the switching parameter $\epsilon$ is detailed in Section ~\ref{sec11}.

For all algorithms including ManPG, ManPG-Ada, ManPQN, RPN-H, IAManPG and Algorithm~\ref{alg:IRPN}, the initial $t_0 = 1/L_f$, where $L_f$ is Lipschitz constant of $\nabla f$ in Assumption~\ref{as01}. The stopping criterion for computing the Riemannian proximal gradient direction~\eqref{eq:subforv} requires the left term in~\eqref{eq:Psilambda} to satisfy
\[
\|B_{x_k}^T \left( \prox_{t h} \bigl( x_k - t \left[\nabla f(x_k) + B_{x_k} \lambda \right] \bigr) - x_k \right)\|_{\F}^2 \leq \mathit{innertol}_k,
\]
where $\mathit{innertol}_0 = \max\left(10^{-13}, \min\left(10^{-11}, 10^{-3} * \sqrt{\mathrm{tol}} * t_0^2 \right)\right)$, $\mathrm{tol} = 10^{-8}nr$, and $\mathit{innertol}_{k} = \min\left(\max\left(10^{-30}, \|v(x_{k-1})\|_{\F}^2 * 10^{-8}\right), \mathit{innertol}_{k-1}\right)$. Unless otherwise indicated, the initial iterate is set to be the $p$ dominant singular vectors of $A$ for sparse PCA. The initial iterate of CM problem follows the same approach in~\cite{WY2023}, i.e., using a Riemannian subgradient method on a random orthonormal matrix for a few steps\footnote{It is pointed out here that RPN-CG and RPN-CGH can converge from any initial iterate on the Stiefel manifold. But ManPQN may diverge for CM problems if a random initial iterate is used. This initialization is used for complete comparisons with all the Riemannian proximal gradient-type algorithms.}. The initial iterate of CD problem is set to follow the same method in~\cite{HWGV2022}.
The termination condition used in all the tested algorithms for sparse PCA is that $\|v(x_k)\|_{\F} \leq 10^{-10}$ or the number of iterations reaches 5000. The termination condition for CM problems is that $\|v(x_k)\|_{\F} \leq 10^{-8}$ or the number of iterations reaches 3000. The termination condition for CD problem is that $\|v(x_k)\|_{\F} \leq 10^{-10}$ or the number of iterations reaches 3000. We say that two minimizers $U_1$ and $U_2$ are the same if $\min_{O^TO = I_p} \|U_1 - U_2 O\|_{\F} \leq 10^{-2}$.

% All algorithms involve solving the subproblem, i.e., solving the Riemannian proximal gradient-type direction, similar to Step~\ref{a:RNG:st01} of Algorithm~\ref{alg:IRPN}, and a semismooth Newton method has been proposed to solve it efficiently~\cite{CMSZ2019,WY2023,SAHJV2024}, 
% the accuracy of solving the subproblems is uniformly set to be $\mathit{innertol} = \min\left(\max\left(10^{-30}, \|v(x_k)\|_{\F}^2 * 10^{-8}\right), \mathit{innertol}\right)$, where the initial innertol is $$\mathit{innertol} = \max\left(10^{-13}, \min\left(10^{-11},10^{-3}*\sqrt{\mathrm{tol}}*t_0^2\right)\right)$$ with $\mathrm{tol} = 10^{-8}nr$,
% and  the inner maximum iteration number is set to 1000.

The Weingarten map~\cite{AMT2013} of the Stiefel manifold is given by $\mathcal{W}_X^{\M}(W,U) = -WX^{\T}U - \frac12 X(W^{\T}U + U^{\T}W)$ and the retraction is chosen to be the polar retraction~\cite[Example~4.1.3]{AMS2008} defined as $R_X(\eta) = (X+ \eta)(\I_r + \eta^{\T}\eta)^{-1/2}$, where $X\in \M$, $\eta \in \T_X \M$. It follows from~\cite{HWGV2022} that the retraction of $\mathcal{F}_v$ is chosen to be $R_X(\eta) = vq_*^{\T}/\|v\|_2 + \mathrm{qf}(X +\eta)(\I - q_*q_*^{\T})$, where $q_* = \mathrm{qf}(X + \eta)^{\T}v/\|\mathrm{qf}(X + \eta)^{\T}v\|_2$ and $\mathrm{qf}(X + \eta)$ denotes the Q factor of the QR decomposition of $X+\eta$. The Weingarten map of $\mathcal{F}_v$ is given by $\mathcal{W}_X^{\M}(W,U)= -\frac{1}{2}W (X^{\T}U - U^{\T}W) - \frac{1}{2}X(W^{\T}U - U^{\T}W) + \ell_1 + \ell_2 ,$
where
$
\ell_1 = \left(W_v(X_v)^{\T} + X_v(W_v)^{\T}\right)/\|X_v\|_2^2 - 2(X_v)^{\T}W_vX_v(X_v)^{\T}   /\|X_v\|_2^4
$
and 
$
\ell_2 = WX^{\T}S+XW^{\T}S + XX^{\T}S
$
with
$X_v = X^{\T}v$, $W_v = W^{\T}v$ and $S = U - UX_vX_v^{\T})/\|X_v\|_2^2$.
% For Stiefel manifold $\M = \mathrm{St}(p,n)$, the Weingarten map~\cite{AMT2013} is computed by
% $$
% \mathcal{W}_x^{\M}(W,U) = -WX^{\T}U - \frac12
%  X(W^{\T}U + U^{\T}W),$$
% where $W\in \T_X M $, $U\in \N_X \M$.
% Furthermore, retraction is set to be the polar retraction~\cite[Example~4.1.3]{AMS2008} defined as $R_X(\eta) = (X+ \eta)(\I_r + \eta^{\T}\eta)^{-1/2}$, where $X\in \M$, $\eta \in \T_X \M$. 

% All tested algorithms terminate if $\|v(x_k)\|_{\F} \leq 10^{-10}$ or the number of iterations reaches 5000.

% \whcomm{}{$t_0$, retraction, Weingarten map, SPCA(m, n, p, $\mu$), CM(n, p, $\mu$), switching parameter of RPN-H, semismooth Newton approach and its accuracy.
% }

\paragraph{Testing environment:} All experiments are performed in MATLAB R2019a on a macOS with 2.7 GHz CPU (Intel Core i7). The implementations of ManPG, ManPG-Ada, and ManPQN are from \url{https://github.com/chenshixiang/ManPG} and \url{https://github.com/QinsiWang2022/ManPQN}. The codes of RPN-CG and RPN-CGH can be found at \url{https://www.math.fsu.edu/~whuang2/papers/RPNtCG.htm} or \url{https://github.com/wutauopt/RPN-CG}.  
%or \url{https://github.com/wutauopt/RPN-CG}.

% if feasible, please add additional github as a good start for me}.

\subsection{RPN-H, RPN-CG, and RPN-CGH} \label{sec11}

Let RPN-CGH denote the algorithm by merging ManPG-Ada with RPN-CG. Specifically, RPN-CGH uses ManPG-Ada if $\|v(x_k)\|_{\F} > \epsilon$ and uses RPN-CG otherwise, where $\epsilon$ is the switch parameter. The parameters in the RPN-CGH are the same as those in the ManPG-Ada and RPN-CG, except for the switching parameter $\epsilon$. It is shown empirically in this section that RPN-CGH is not sensitive to the choice of the switching parameter and is more efficient than RPN-CG.

% In this Subsection,  we first combine RPN-CG with ManPG-Ada and propose a hybrid method, denoted by RPN-CGH, to further improved the performance of RPN-CG of Algorithm~\ref{alg:IRPN}, the algorithmic framework of RPN-CGH is as follows: After computing $v(x_k)$ in Step~\ref{a:RNG:st01} of Algorithm~\ref{alg:IRPN}, if $\|v(x_k)\|_{\F} > \epsilon$, then we use ManPG-Ada, otherwise, we use RPN-CG.  The parameter settings for the RPN-CGH are the same as those for the ManPG-Ada and RPN-CG, except for the switching parameter $\epsilon$.

\textbf{RPN-CGH v.s. RPN-H.} Both RPN-CGH and RPN-H rely on the switching parameter $\epsilon$. However, they perform differently to the value of $\epsilon$. The sparse PCA random data are used as examples and the percentages of success run are reported in Table~\ref{tab:RPN-H_vs_RPN-CGH}. Initial iterates are generated by orthonormalizing a matrix whose entries are drawn from the standard normal distribution. A test is considered a success if it terminates due to $\|v(x_k)\|_{\F} < 10^{-10}$. RPN-H is considered a failure if the number of iterations reaches 5000 or the number of Newton steps reaches 20. Note that the number of Newton steps is usually smaller than 10 in success runs. RPN-CGH is considered a failure if the number of iterations reaches 5000. We conclude from Table~\ref{tab:RPN-H_vs_RPN-CGH} that RPN-CGH is robust to the value of $\epsilon$ in the sense that the success rate is always 100\% for all the values of $\epsilon$. However, RPN-H is sensitive to $\epsilon$. If $\epsilon$ is not sufficiently small, then RPN-H may fail to converge.

\begin{table}[ht]
    \centering
    \caption{Compare the robustness of RPN-H and RPN-CGH for different switching parameters $\epsilon$. $(n,p,\mu) = (300, 5,0.8)$. 100 random runs are used.}
    \label{tab:RPN-H_vs_RPN-CGH}
    \begin{tabular}{c|c|c|c|c|c}
    \hline
      & \multicolumn{5}{c}{$\epsilon$} \\
    \hline
      & $10^{-1}$ & $10^{-2}$ & $10^{-3}$ & $10^{-4}$ & $10^{-5}$ \\
    \hline
    RPN-H &  6\% & 15\% & 61\% & 93\% & 100\% \\
    RPN-CGH &  100\% & 100\% & 100\% & 100\% & 100\%  \\
    \hline
    \end{tabular}
\end{table}

% \begin{table}[ht]
%     \centering
%     \caption{Comparison of RPN-H and RPN-CGH with different switching parameter $\epsilon$, where $(n,p,\mu) = (300, 5,0.8)$. 100 random runs are used. The subscript $k$ indicates a scale of $10^k$.}
%     \label{tab:RPN-H_vs_RPN-CGH}
%     \begin{tabular}{c|c|c}
%     \hline
%      Algo & $\epsilon$   & Number of success \\
%     \hline
%     RPN-H & $1.00_{-1}$  & 6  \\
%     RPN-CGH & $1.00_{-1}$  & 100  \\
%     \hline
%     RPN-H & $1.00_{-2}$  & 15 \\
%      RPN-CGH & $1.00_{-2}$  & 100 \\
%    \hline
%     RPN-H & $1.00_{-3}$  & 61 \\
%      RPN-CGH & $1.00_{-3}$  & 100\\
%    \hline
%    RPN-H & $1.00_{-4}$  & 93 \\
%     RPN-CGH & $1.00_{-4}$  & 100 \\
%    \hline
%    RPN-H & $1.00_{-5}$  & 100 \\
%   RPN-CGH & $1.00_{-5}$ & 100 \\
%     \hline
%     \end{tabular}
% \end{table}

\textbf{RPN-CGH v.s. RPN-CG.} The sparse PCA problems with random data $A$ are used to compare the efficiency of RPN-CG and RPN-CGH. The numerical results are reported in Table~\ref{tab:RPN-CG_vs_RPN-CGH}, where multiple switching parameters $\epsilon$ are used. We point out that these algorithms all converge to the same minimizer when the same random seed is used. In Table~\ref{tab:RPN-CG_vs_RPN-CGH}, RPN-CG takes the least number of iterations but the most computational time. The number of RPN-CGH takes more iterations as $\epsilon$ decreases. This is due to that ManPG-Ada generally takes more iterations than RPN-CG. Therefore, the smaller $\epsilon$ is, the more iterations ManPG-Ada takes in RPN-CGH. It follows that the number of overall iterations increases. Though RPN-CG is fast in the sense of iterations, it has extra costs in each iteration. For example, even if Algorithm~\ref{alg:tCG} terminates without entering CG iterations in the sense that it stops by Steps~\ref{a:tCG:st06} or~\ref{a:tCG:st02}, checking the conditions in Steps~\ref{a:tCG:st06} or~\ref{a:tCG:st02} takes non-negligible computational cost and the resulting search direction is still the Riemannian proximal gradient direction $v(x_k)$. This motivate us to propose the hybrid algorithm RPN-CGH, which is more efficient than RPN-CG, verified by Table~\ref{tab:RPN-CG_vs_RPN-CGH}.
In the remaining of the numerical experiments, the switch parameter $\epsilon$ of RPN-CGH is set to be $10^{-2}$.

% We compare RPN-CG with RPN-CGH in multiple values of switching parameter $\epsilon$. We use the Sparse PCA as test problem, where $A\in \R^{m\times n}$ is a random matrix from random data. $(n,r,\mu)$ is set to be $(1000,10,0.8)$, $m = 50$, the multiple values of switching parameter are set to be $\epsilon=[10^{-1}, 10^{-2}, 10^{-3}, 10^{-4}, 10^{-5}]$.  These two algorithms terminate if $\|v(x_k)\|_{\F} \leq 10^{-10}$ or the number of iterations reaches 5000. The numerical results are reported on Table~\ref{tab:RPN-CG_vs_RPN-CGH}, the RPN-CGH converges faster and less CPU time than RPN-CG. 
% In the following, we set $\epsilon = 10^{-2}$ for RPN-CGH.

\begin{table}[ht]
    \centering
    \caption{Compare the efficiency of RPN-CG and RPN-CGH for different switching parameters $\epsilon$. An average result of 20 random runs is reported. $(n, r,\mu) = (600, 10, 0.8)$.  The subscript $k$ indicates a scale of $10^k$.}
    \begin{tabular}{c|c|ccccc}
    \hline
     Algo & $\epsilon$   & iter & Fval & $\|v(x_k)\|_{\F}$ & CPU time & sparsity\\
    \hline
   RPN-CG & - & 394.00 & $-4.71_{1}$ & $3.30_{-11}$ & 1.21 & 0.56 \\
    \hline
RPN-CGH & $10^{-1}$ & 405.00 & $-4.71_{1}$ & $2.68_{-13}$ & 1.11 & 0.56 \\
RPN-CGH & $10^{-2}$ & 574.00 & $-4.71_{1}$ & $2.22_{-14}$ & 0.98 & 0.56 \\
RPN-CGH & $10^{-3}$ & 760.00 & $-4.71_{1}$ & $1.39_{-13}$ & 1.03 & 0.56 \\
RPN-CGH & $10^{-4}$ & 801.00 & $-4.71_{1}$ & $5.62_{-14}$ & 1.03 & 0.56 \\
RPN-CGH & $10^{-5}$ & 885.00 & $-4.71_{1}$ & $7.68_{-14}$ & 1.03 & 0.56 \\
    \hline
    \end{tabular}
    \label{tab:RPN-CG_vs_RPN-CGH}
\end{table}

\subsection{Sparse PCA}\label{sec:spca}

The numerical results from sparse PCA using random data are reported in Table~\ref{tab:spca_randn}. The parameter $m$ is set to be $50$ and parameters $n, p, \mu$ are given in the table. For a fair comparison, we take the average of 20 runs where all the algorithms converge to the same minimizer\footnote{if algorithms converge to different minimizers, then the convergence speeds of compared algorithms may differ due to the various landscapes of the cost function around the minimizers, not due to the differences of the algorithms.}.
Note that ManPG, ManPG-Ada, and ManPQN sometimes stop due to the maximum number of iterations. This is verified by the averages of $\|v(x_k)\|_{\F}$ from ManPG, ManPG-Ada, and ManPQN larger than $10^{-10}$. RPN-CG consistently takes the smallest number of iterations. Though RPN-CGH takes slightly more iterations, it is the most efficient algorithm in the sense of computational time. This result is further verified by the left two plots of Figure~\ref{fig:spca}. 

The right two plots of Figure~\ref{fig:spca} repeat the numerical experiments using synthetic data. The result again verifies that RPN-CGH is the most efficient one, and is multiple times faster than ManPG, ManPG-Ada, and ManPQN.

\begin{table}[ht]
    \centering
    \caption{\textbf{Sparse PCA.} An average result of 20 random runs for random data. Multiple values of $n$, $r$, and $\mu$ are used. The subscript $k$ indicates a scale of $10^k$. }
    \label{tab:spca_randn}
    \begin{tabular}{c|c|ccccc}
    \hline
    $(n, r, \mu)$ & Algo & iter & Fval   & $\|v(x_k)\|_{\F}$ & CPU time & sparsity  \\
    \hline
(400, 8, 0.8) & ManPG     & 3416.15 & $-2.16_{1}$ & $3.66_{-9}$ & 2.69 & 0.63 \\
(400, 8, 0.8) & ManPG-Ada & 1281.55 & $-2.16_{1}$ & $1.06_{-10}$ & 1.21 & 0.63 \\
(400, 8, 0.8) & ManPQN    & 1260.40 & $-2.16_{1}$ & $9.83_{-11}$ & 0.72 & 0.63 \\
(400, 8, 0.8) & RPN-CG    & 204.85 & $-2.16_{1}$ & $1.16_{-11}$ & 0.37 & 0.63 \\
(400, 8, 0.8) & RPN-CGH   & 294.30 & $-2.16_{1}$ & $7.13_{-12}$ & 0.33 & 0.63 \\
    \hline
(800, 8, 0.8) & ManPG     & 4232.80 & $-5.92_{1}$ & $1.84_{-7}$ & 3.56 & 0.48 \\
(800, 8, 0.8) & ManPG-Ada & 1867.05 & $-5.92_{1}$ & $2.57_{-10}$ & 1.80 & 0.48 \\
(800, 8, 0.8) & ManPQN    & 1883.80 & $-5.92_{1}$ & $1.22_{-10}$ & 1.43 & 0.48 \\
(800, 8, 0.8) & RPN-CG    & 215.05 & $-5.92_{1}$ & $1.07_{-11}$ & 0.60 & 0.48 \\
(800, 8, 0.8) & RPN-CGH   & 308.90 & $-5.92_{1}$ & $1.18_{-11}$ & 0.52 & 0.48 \\
    \hline
(400, 12, 0.8) & ManPG     & 4454.55 & $-2.82_{1}$ & $7.23_{-8}$ & 4.71 & 0.66 \\
(400, 12, 0.8) & ManPG-Ada & 1809.00 & $-2.82_{1}$ & $3.04_{-10}$ & 2.18 & 0.66 \\
(400, 12, 0.8) & ManPQN    & 1740.20 & $-2.82_{1}$ & $1.98_{-10}$ & 1.82 & 0.66 \\
(400, 12, 0.8) & RPN-CG    & 330.40 & $-2.82_{1}$ & $1.01_{-11}$ & 1.00 & 0.66 \\
(400, 12, 0.8) & RPN-CGH   & 418.35 & $-2.82_{1}$ & $1.19_{-11}$ & 0.73 & 0.66 \\
    \hline
(400, 8, 1.0) & ManPG     & 4283.25 & $-7.88$ & $1.62_{-9}$ & 3.68 & 0.77 \\
(400, 8, 1.0) & ManPG-Ada & 1131.95 & $-7.88$ & $9.74_{-11}$ & 1.12 & 0.77 \\
(400, 8, 1.0) & ManPQN    & 1553.90 & $-7.88$ & $9.86_{-11}$ & 0.88 & 0.77 \\
(400, 8, 1.0) & RPN-CG    & 254.25 & $-7.88$ & $1.00_{-11}$ & 0.48 & 0.77 \\
(400, 8, 1.0) & RPN-CGH   & 328.30 & $-7.88$ & $1.01_{-11}$ & 0.43 & 0.77 \\
    \hline
    \end{tabular}
\end{table}

\begin{figure}[ht]
\centering 
{\hspace{-0.05\textwidth} \includegraphics[width=0.52\textwidth]{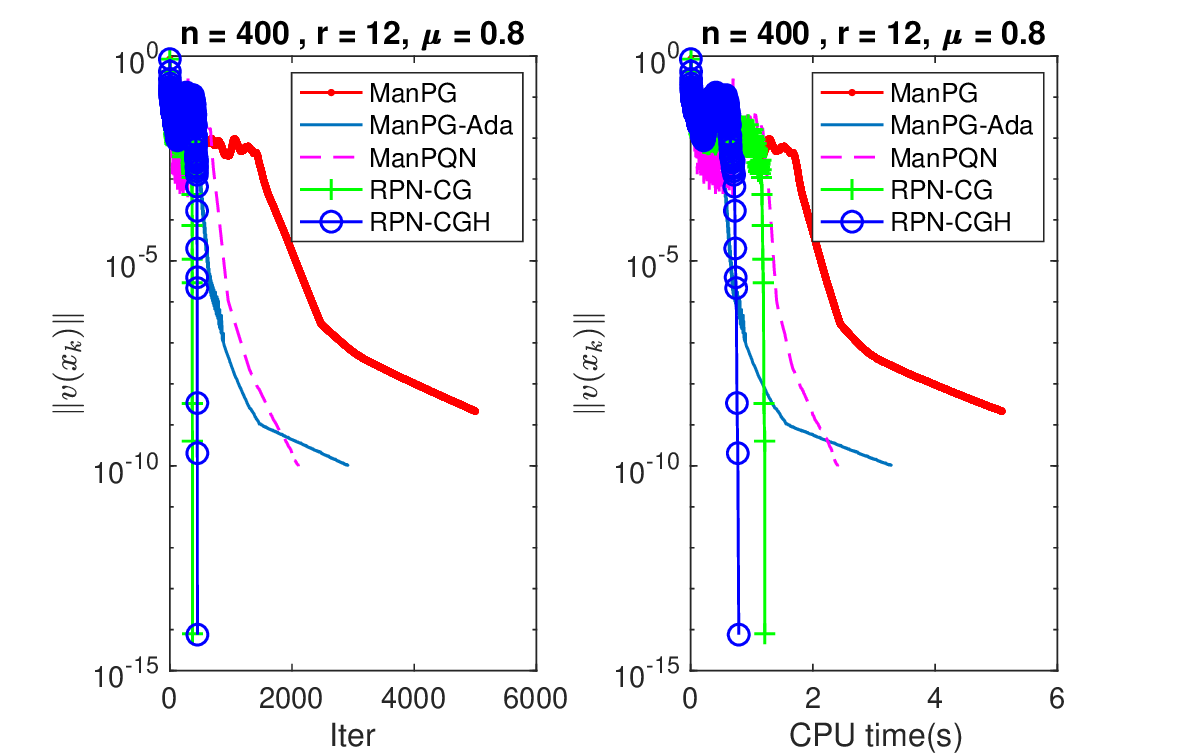}}
{\hspace{-0.05\textwidth} \includegraphics[width=0.52\textwidth]{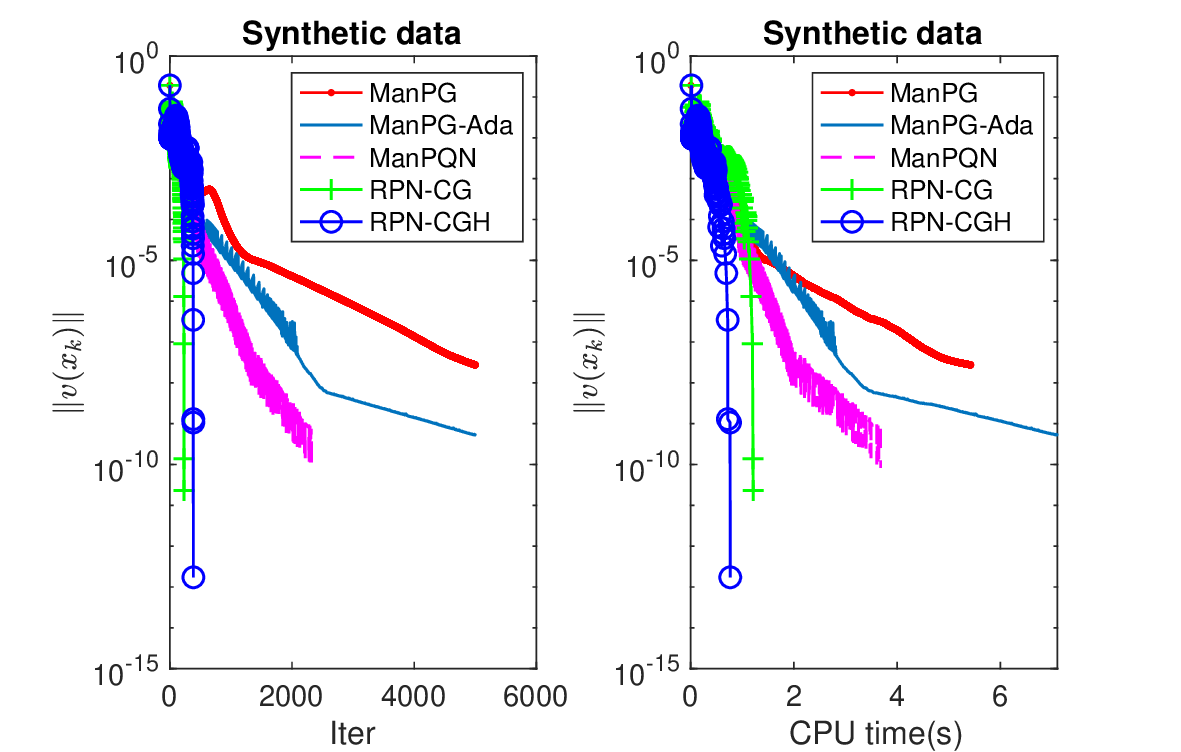}}
\caption{Sparse PCA: plots of $\|v(x_k)\|$ versus iterations and CPU times respectively. The left two plots are generated by random data and the right two plots are generated by synthetic data with $(n, p, \mu) = (4000, 5, 0.8)$ and $\epsilon = 10^{-3}$.} \label{fig:spca} 
\end{figure}

\subsection{CM Problem}\label{sec:cm}

Empirically, ManPG, ManPG-Ada, ManPQN, RPN-CG, and RPN-CGH likely converge to different minimizers for CM problems. Therefore, we report the numerical results for multiple values of $n$, $p$, and $\mu$ in Table~\ref{tab:cm_randn}, where algorithms may or may not converge to the same minimizer. 
As shown in Table~\ref{tab:cm_randn}, ManPG, ManPG-Ada, and ManPQN fail to converge in the sense of $\|v(x_k)\|_{\F} \leq 10^{-8}$ within 3000 iterations for all the 50 random runs. In contrast, RPN-CG and RPN-CGH can converge within a few hundred iterations for all the random runs. Therefore, RPN-CG and RPN-CGH are the most efficient algorithms. Note that the efficiency of RPN-CG is similar to that of RPN-CGH for CM problems. A typical run is shown in Figure~\ref{fig:cm}. Overall, for CM problems, the first-order algorithms ManPG, ManPG-Ada, and ManPQN have difficulty finding a highly accurate solution, whereas RPN-CG and RPN-CGH still work well and are more efficient.

% For CM problem in~\eqref{eq:cm},
% all tested algorithms terminate if $\|v(x_k)\|_{\F} \leq 10^{-8}$ or the number of iterations reaches 3000. The results with multiple values of $(n,r,\mu)$ are reported in Table~\ref{tab:cm_n}, Table~\ref{tab:cm_r} and Table~\ref{tab:cm_mu}. Additionally, we select $n =256$, $r = 4$, and $\mu = 1/30$ to illustrate $\|v_k\|$ versus the iterations and CPU time, the results are shown in Figure~\ref{fig:cm_data}.
% The performance of RPN-CG and RPN-CGH on the CM problem is similar to that on the sparse PCA, that is, RPN-CG and RPN-CGH convergence faster in the sense of both computational time and the number of iterations.

\begin{table}[ht]
    \centering
    \caption{\textbf{CM.} An average result of 50 random runs for random data. Multiple values of $n$, $r$, and $\mu$ are used. The subscript $k$ indicates a scale of $10^k$. }
    \label{tab:cm_randn}
    \begin{tabular}{c|c|ccccc}
    \hline
    $(n, r, \mu)$ & Algo & iter & Fval   & $\|v(x_k)\|_{\F}$ & CPU time & sparsity  \\
    \hline
(256, 4, 0.1) & ManPG      & 3000.00 & $2.49$ & $4.03_{-5}$ & 0.75 & 0.85 \\
(256, 4, 0.1) & ManPG-Ada  & 3000.00 & $2.49$ & $9.49_{-5}$ & 0.88 & 0.85 \\
(256, 4, 0.1) & ManPQN     & 3000.00 & $2.49$ & $9.06_{-6}$ & 1.22 & 0.84 \\
(256, 4, 0.1) & RPN-CG     & 92.54 & $2.49$ & $2.66_{-9}$ & 0.20 & 0.86 \\  
(256, 4, 0.1) & RPN-CGH    & 92.08 & $2.49$ & $2.48_{-9}$ & 0.19 & 0.86 \\  
    \hline
(512, 4, 0.1) & ManPG      & 3000.00 & $3.29$ & $3.83_{-5}$ & 0.76 & 0.86 \\
(512, 4, 0.1) & ManPG-Ada  & 3000.00 & $3.29$ & $1.16_{-4}$ & 0.88 & 0.86 \\
(512, 4, 0.1) & ManPQN     & 3000.00 & $3.30$ & $1.44_{-6}$ & 2.98 & 0.86 \\
(512, 4, 0.1) & RPN-CG     & 147.40 & $3.29$ & $2.29_{-9}$ & 0.48 & 0.88 \\ 
(512, 4, 0.1) & RPN-CGH    & 148.30 & $3.29$ & $2.68_{-9}$ & 0.47 & 0.88 \\ 
    \hline
(256, 8, 0.1) & ManPG     & 3000.00 & $5.00$ & $1.89_{-4}$ & 4.08 & 0.81 \\
(256, 8, 0.1) & ManPG-Ada & 3000.00 & $4.99$ & $5.91_{-4}$ & 5.45 & 0.81 \\
(256, 8, 0.1) & ManPQN    & 3000.00 & $5.03$ & $4.37_{-5}$ & 1.37 & 0.80 \\
(256, 8, 0.1) & RPN-CG    & 220.96 & $4.98$ & $4.80_{-9}$ & 2.75 & 0.82 \\ 
(256, 8, 0.1) & RPN-CGH   & 200.28 & $4.98$ & $4.39_{-9}$ & 1.60 & 0.82 \\ 
    \hline
(256, 4, 0.15) & ManPG      & 3000.00 & $3.44$ & $3.28_{-5}$ & 0.65 & 0.87 \\
(256, 4, 0.15) & ManPG-Ada  & 3000.00 & $3.44$ & $7.01_{-5}$ & 0.79 & 0.87 \\
(256, 4, 0.15) & ManPQN     & 3000.00 & $3.45$ & $9.44_{-6}$ & 1.09 & 0.87 \\
(256, 4, 0.15) & RPN-CG     & 41.74 & $3.44$ & $2.16_{-9}$ & 0.07 & 0.88 \\  
(256, 4, 0.15) & RPN-CGH    & 42.14 & $3.44$ & $1.81_{-9}$ & 0.07 & 0.88 \\  
    \hline
    \end{tabular}
\end{table}

\begin{figure}[ht]
\centering 
\includegraphics[width=0.52\textwidth]{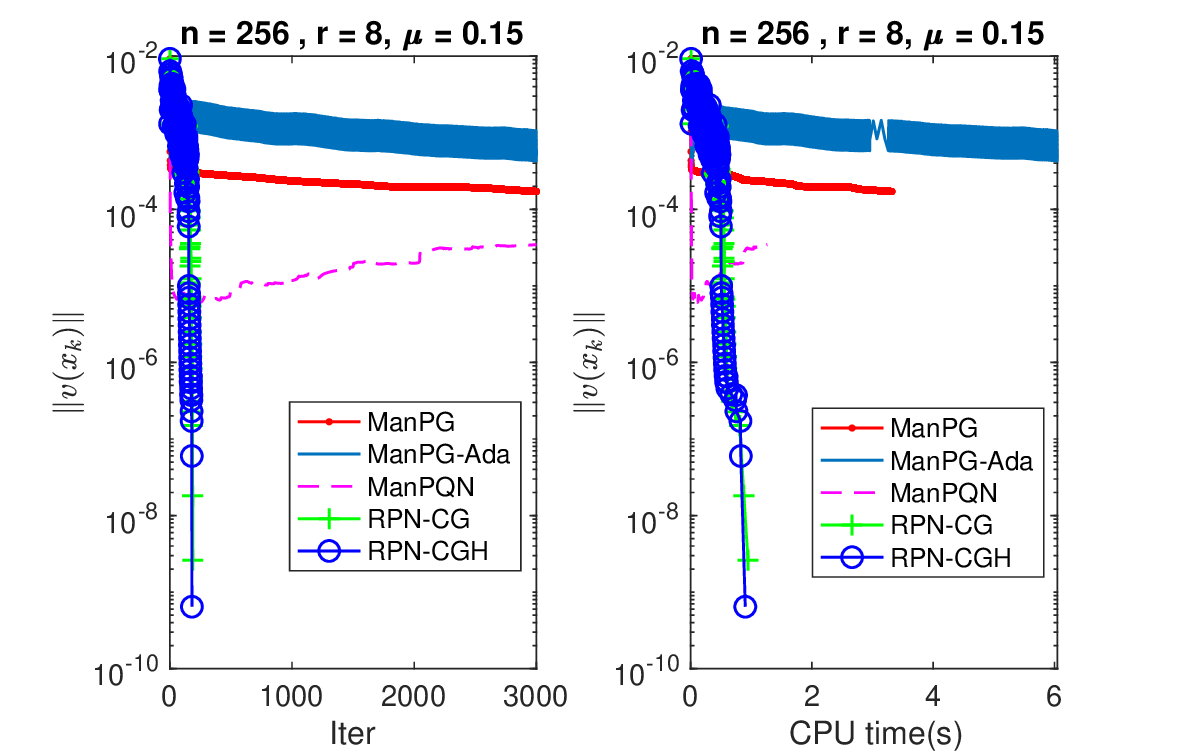}
\caption{CM: plots of $\|v(x_k)\|$ versus iterations and CPU times respectively. } \label{fig:cm} 
\end{figure}

\subsection{Community Detection}\label{subsec:cd}
In~\cite{HWGV2022}, an inexact accelerated proximal gradient method, called IAManPG, is proposed for solving~\eqref{eq:F} with $\M = \mathcal{F}_v$. We combine RPN-CG and IAmanPG and proposed a hybrid method called RPN-CGH, the switch parameter $\epsilon$ is set to be $10^{-4}$. Results are presented for solving community detection problem on synthetic LFR benchmark network~\cite{lan2008benchmark}. A software package to generate the benchmark networks is available at~\url{https://www.santofortunato.net/resources}. A typical run is shown in Figure~\ref{fig:cd}, we conclude that RPN-CG and RPN-CGH convergence faster in the sense of both computational time and the number of iterations.

% \begin{table}[ht]
%     \centering
%     \caption{\textbf{Community Detection.}  Real data sets for multiple values of $n$, $r$, and $\mu$. The subscript $k$ indicates a scale of $10^k$. }
%     \label{tab:cd}
%     \begin{tabular}{c|c|ccccc}
%     \hline
%     $(n, r, \mu)$ & Algo & iter & Fval   & $\|v(x_k)\|_{\F}$ & CPU time & sparsity  \\
%     \hline
% (500, 10, 0.3) & ManPG      & 1270.00 & $-6.86_1$ & $9.85_{-11}$ & 3.76 & 0.32 \\
% (500, 10, 0.3) & IAManPG  & 245.00 & $-6.86_1$ & $1.15_{-7}$ & 0.87 & 0.32 \\
% (500, 10, 0.3) & RPN-CG     & 77.00 & $-6.86_1$ & $7.36_{-11}$ & 0.85 & 0.32 \\  
% (500, 10, 0.3) & RPN-CGH    & 93.00 & $-6.86_1$ & $4.25_{-12}$ & 0.65 & 0.32 \\  
%     \hline
% (1000, 10, 0.5) & ManPG      & 911.00& $-5.023_1$ & $9.81_{-11}$ & 2.76 & 0.50 \\
% (1000, 10, 0.5) & IAManPG  & 176.00 & $-5.022_1$ & $2.88_{-7}$ & 0.60 & 0.46 \\
% (1000, 10, 0.5) & RPN-CG     & 100.00 & $-5.022_1$ & $2.42_{-11}$ & 1.21 & 0.46 \\ 
% (1000, 10, 0.5) & RPN-CGH    & 74.6 & $-5.023_1$ & $6.68_{-12}$ & 0.48 & 0.50 \\ 
%     \hline
% (256, 8, 0.1) & ManPG     & 3000.00 & $5.00$ & $1.89_{-4}$ & 4.08 & 0.81 \\
% (256, 8, 0.1) & IAManPG & 3000.00 & $4.99$ & $5.91_{-4}$ & 5.45 & 0.81 \\
% (256, 8, 0.1) & RPN-CG    & 220.96 & $4.98$ & $4.80_{-9}$ & 2.75 & 0.82 \\ 
% (256, 8, 0.1) & RPN-CGH   & 200.28 & $4.98$ & $4.39_{-9}$ & 1.60 & 0.82 \\ 
%     \hline
%     \end{tabular}
% \end{table}

\begin{figure}[ht]
\centering 
\includegraphics[width=0.52\textwidth]{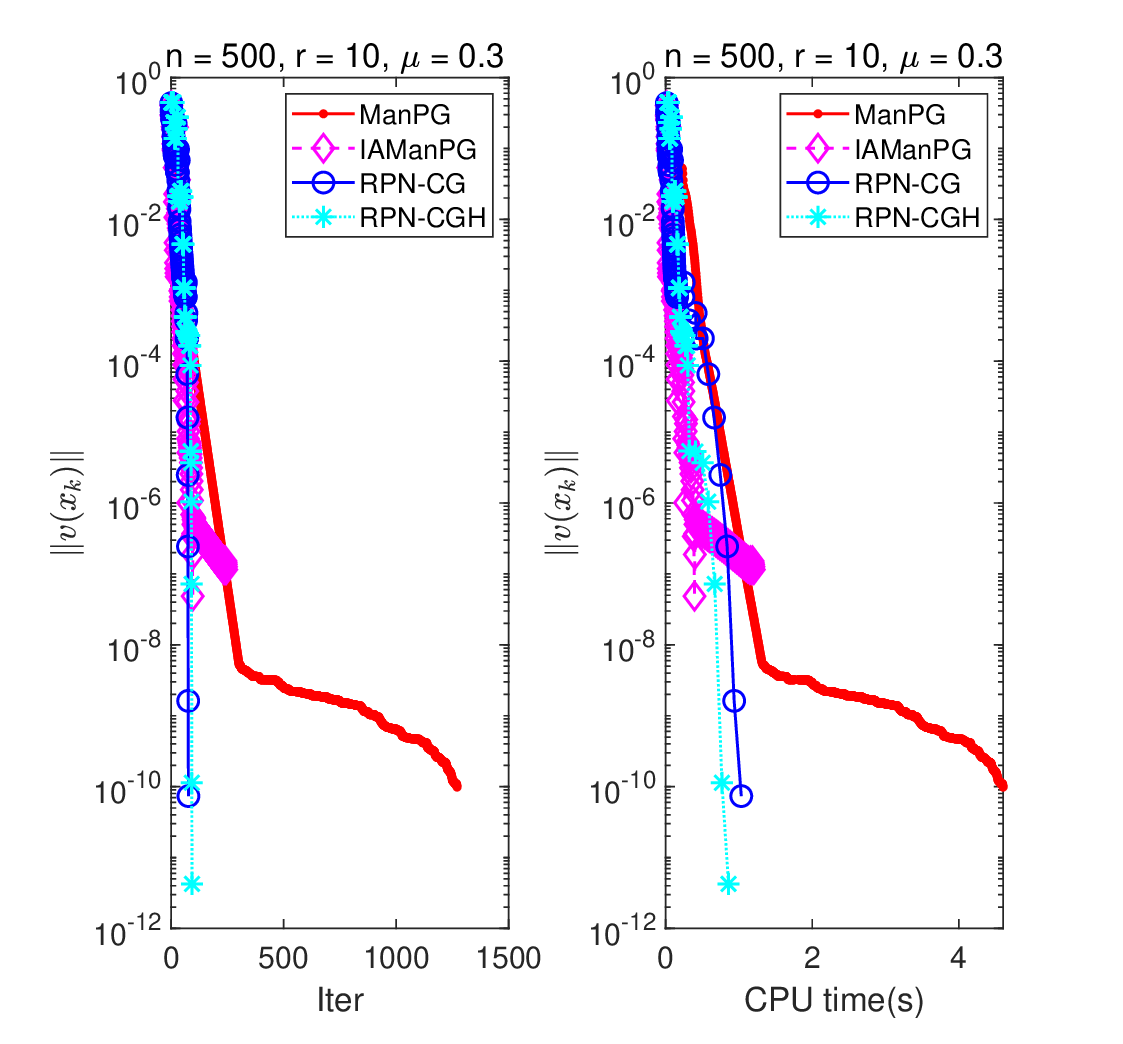}
\caption{Community Detection: plots of $\|v(x_k)\|$ versus iterations and CPU times respectively. } \label{fig:cd} 
\end{figure}

\section{Conclusion} \label{sec13}

In this paper, we proposed a Riemannian proximal Newton-CG method (RPN-CG) by integrating the ideas from the Riemannian proximal Newton method and the truncated conjugate gradient method. The global convergence and local superlinear convergence were established under reasonable assumptions. The proposed RPN-CG method overcame the difficulty that the existing hybrid approach is sensitive to the switching parameter. Numerically, we further combined ManPG-Ada with RPN-CG and proposed an RPN-CGH method. It is shown empirically that RPN-CG and RPN-CGH converge globally and superlinearly locally as desired and are much more efficient when compared to state-of-the-art methods, including ManPG, ManPG-Ada, and ManPQN.

\section*{Acknowledgements}

The authors would like to thank Pierre-Antoine Absil for discussions on optimization on Riemannian manifolds.

\bibliographystyle{plain}
\bibliography{WHlibrary}

\end{document}